\documentclass[11pt]{amsart}
\usepackage{fancyhdr}
\usepackage[english]{babel}
\usepackage{amssymb}
\usepackage{mathrsfs}
\usepackage{enumerate}
\usepackage{color}
\usepackage{ifpdf}
\usepackage{tikz}
\usepackage{tikz-cd}
\usetikzlibrary{decorations.pathmorphing,arrows}
\usepackage[initials]{amsrefs}

\usepackage{color}

% Symbols:

\newcommand{\bbN}{{\mathbb N}}

\newcommand{\bbR}{{\mathbb R}}

\newcommand{\bbC}{{\mathbb C}}

\newcommand{\scrD}{\mathscr{D}}

% General math commands:

\newcommand{\PSL}{\operatorname{PSL}}

\newcommand{\acts}{\curvearrowright}

\newcommand{\id}{\operatorname{id}}

\newcommand{\interior}{\operatorname{int}}

\newcommand{\pr}{\operatorname{pr}}

\newcommand{\supp}{\operatorname{supp}}

\newcommand{\Isom}{\operatorname{Isom}}

\newcommand{\QM}{\operatorname{QM}}

\newcommand{\Stab}{\operatorname{Stab}}
\newcommand{\half}{\frac{1}{2}}

\newcommand{\Leb}{\lambda_{\bbR}}
\newcommand{\dd }{\,{\rm d}}

\newcommand{\PSnu}{{\nu^{\operatorname{PS}}}}

\newcommand{\Ball}[2]{\operatorname{B}\left({#1},{#2}\right)}

\newcommand{\BMSm}{{m^{\operatorname{BMS}}}}

\newcommand{\BMm}{{\mu^{\operatorname{BM}}}}

\newcommand{\barBMm}{{\bar{\mu}^{\operatorname{BM}}}}

\newcommand{\Mm}{{\mu^{\operatorname{M}}}}

\newcommand{\bG}{{\partial \Gamma}}
\newcommand{\dbG}{{\partial^{2}\Gamma}}

\newcommand{\SAT}{\operatorname{(SAT)}}
\newcommand{\MErg}{\operatorname{(MErg)}}
\newcommand{\WM}{\operatorname{(WM)}}
\newcommand{\Erg}{\operatorname{(Erg)}}
\newcommand{\rSAT}{\operatorname{(rSAT)}}
\newcommand{\rMErg}{\operatorname{(rMErg)}}
\newcommand{\rWM}{\operatorname{(rWM)}}
\newcommand{\rErg}{\operatorname{(rErg)}}

\newcommand{\overto}[1]{{\buildrel{#1}\over\longrightarrow}}

\newcommand{\setdef}[2]{ \left\{ {#1}\ :\ {#2} \right\} }
\newcommand{\wt}[1]{\widetilde{#1}}

\newcommand{\Gprod}[3]{{\langle{#1}\mid{#2}\rangle_{#3}}}
\newcommand{\ang}[2]{d_{\partial\Gamma}({#1},{#2})}

\newcommand{\nactn}[2]{{#1}*{#2}}   % notation for near action
\newcommand{\actn}[2]{{#1}\cdot{#2}}   % notation for near action

% Theorem styles and numbering:

\newtheorem{theorem}{Theorem}[section]
\newtheorem{thm}[theorem]{Theorem}
\newtheorem{lemma}[theorem]{Lemma}

\newtheorem{cor}[theorem]{Corollary}
\newtheorem{proposition}[theorem]{Proposition}
\newtheorem{prop}[theorem]{Proposition}

\theoremstyle{definition}

\newtheorem{defn}[theorem]{Definition}

\newtheorem{example}[theorem]{Example}
\newtheorem{examples}[theorem]{Examples}
\newtheorem{remark}[theorem]{Remark}
\newtheorem{setup}[theorem]{Setup}

\newtheorem*{ques*}{Question}

\numberwithin{equation}{section}

\begin{document}

\title[Ergodic properties of hyperbolic groups]
{Some ergodic properties of metrics\\ 
on hyperbolic groups}

\author{Uri Bader}
\address{Weizmann Institute, Rehovot}
\email{bader@weizmann.ac.il}
\author{Alex Furman}
\address{University of Illinois at Chicago}
\email{furman@uic.edu}

\date{\today}

\begin{abstract}
	Let $\Gamma$ be a non-elementary Gromov hyperbolic group,
	and $\partial \Gamma$ denote its Gromov boundary.
	Consider $\Gamma$-invariant proper, quasi-convex metric $d$ on $\Gamma$,
	the associated Patterson--Sullivan measure class $[\PSnu]$ on $\bG$, and its square $[\PSnu\times\PSnu]$ on
	$\dbG$ -- the space of distinct pairs of points on the boundary.
	We study ergodicity properties of the $\Gamma$-actions
	on $(\bG,[\PSnu])$ and on $(\dbG,[\PSnu\times\PSnu])$.
    We also prove some ergodic theorems for $\Gamma$-actions
    guided by the geometry of $(\Gamma,d)$.
\end{abstract}

\maketitle
%\tableofcontents

%%%%%%%%%%%%%%%%%% beginning of content %%%%%%%

\section{Introduction and Statement of the main results} % (fold)
\label{sec:introduction_and_statement_of_the_main_results}

Closed Riemannian manifolds of strictly negative curvature 
have been extensively studied in geometry and dynamics. 
The geodesic flow on the unit tangent bundle to such a manifold has many invariant probability measures, 
among which there is a unique one of maximal entropy. This measure, constructed independently by Bowen and Margulis,
is closely related to the Patterson--Sullivan measures on the boundary of the universal covering of the manifold,
acted upon by the fundamental group of the manifold.

In this paper we consider a broader class of dynamical systems $\Gamma\acts (\bG,[\PSnu])$, 
where $\Gamma$ is a Gromov hyperbolic group, $\bG$ is its boundary, and $[\PSnu]$ is a Patterson--Sullivan measure class
associated to a geometry on $\Gamma$ taken from a large family that includes 
geometric context as above, word metrics, Green metrics etc. (see Setup~\ref{setup} below). 
We shall prove several ergodicity properties in this broader context, generalizing
some known results and obtaining some results that are 
new even in the context of negatively curved manifolds 
(Theorem~\ref{T:rSAT}, Corollaries~\ref{C:WM} and \ref{C:induced}).

\medskip

\begin{setup}[Coarse-geometric framework]\label{setup}\hfill{}\\
	Let $\Gamma$ be a non-elementary Gromov-hyperbolic group acting faithfully on its boundary $\bG$.
	Consider a proper and cocompact isometric $\Gamma$-action on a quasi-convex Gromov-hyperbolic metric space $(M,d_M)$,
	and the left-invariant pseudo-metric $d$ on $\Gamma$, defined by restricting $d_M$ to a $\Gamma$-orbit of some 
	point $o\in M$:
	\[
		d(g_1,g_2):=d_M(g_1o,g_2o)\qquad(g_1,g_2\in \Gamma).
	\]
	Note that different points $o,o'\in M$ define pseudo-metrics $d,d'$ that differ by a bounded amount: 
	$|d(g_1,g_2)-d'(g_1,g_2)|\le 2d_M(o,o')$.
	
	We denote by $D_\Gamma$ the set of all possible pseudo-metrics $d:\Gamma\times\Gamma\to [0,\infty)$ 
	obtained from such isometric $\Gamma$-actions, and let $\scrD_\Gamma=D_\Gamma/\!\!\sim$ be the space of equivalence classes 
	of such pseudo-metrics,
	where $d\sim d'$ if $d-d'$ is bounded. 
	We denote by $[d]\in\scrD_\Gamma$ the equivalence class of $d\in D_\Gamma$.
	\end{setup}
\begin{examples}\label{E:main}\hfill{}\\ 
	The reader can keep in mind the following main classes of examples:
	\begin{itemize}
		\item[(a)] 
		Let $N$ be a closed connected Riemannian manifold of 
            strictly negative sectional curvature,
		$\Gamma=\pi_1(N)$ its fundamental group, $M=\wt{N}$ its universal covering, 
            and $d_M$ on $M$ the 
		Riemannian metric lifted from the given Riemannian structure on $N$.
		We shall refer to these as \textbf{geometric examples}.
		These well studied objects motivate our general discussion. 
		\item[(b)] 
		Non-elementary Gromov hyperbolic groups $\Gamma$, equipped with a word metric $d_S$ 
		associated with a choice of a finite symmetric generating set $S$ for $\Gamma$.
		\item[(c)] 
		Convex cocompact group actions on proper $\operatorname{CAT}(-1)$ spaces. For example, quasi-Fuchsian 
		embedding of a surface group in $\PSL_2(\bbC)=\Isom_+(\mathbf{H}^3)$.
		\item[(d)] 
		Green metric $d_\mu$ associated with a symmetric, finitely supported, generating probability measure 
		$\mu$ on any Gromov hyperbolic group $\Gamma$ (see \cite{BHM1}).
		\item[(e)]
		Anosov subgroup $\Gamma$ in a higher-rank real Lie group $G$, equipped with the restriction to the orbit
		of a certain Finsler metric on the symmetric space $G/K$ of $G$ (see \cite{DK}). 
	\end{itemize}	
\end{examples} 

\medskip

Let $\Gamma$ and $(M,d_M)$ be as in Setup~\ref{setup}. 
Throughout this paper we shall use the following constructions
that depend only on the equivalence class $[d]\in\scrD_\Gamma$ of $d\in D_\Gamma$.
Denote by $\delta_\Gamma$ the \textbf{growth exponent} for $(\Gamma,d)$, given by
\begin{equation}\label{e:growth}
	\delta_\Gamma:=\lim_{R\to\infty} \frac{1}{R} \log\# \left\{ g\in \Gamma \mid d_M(go,o)<R\right\}.
\end{equation}
The limit exists, does not depend on the choice of the base point $o\in M$, or the representative $d\in[d]$,
and (since we assume $\Gamma$ to be non-elementary) is strictly positive: $\delta_\Gamma>0$ 
(cf. \cite{Coo}).

\begin{example}
	In the geometric Example~\ref{E:main}.(a), $\delta_\Gamma$ coincides with the \emph{topological entropy} 
	of the geodesic flow on the unit tangent bundle 
	$T^1N$; this value is achieved by the Kolmogorov--Sinai entropy of a unique invariant probability measure -- the \emph{Bowen--Margulis measure}.
	In the convex cocompact Example~\ref{E:main}.(c), $\delta_\Gamma$ is usually referred to as the \emph{critical exponent} of $\Gamma$
	(cf. \cite{Yue}).
\end{example}	

Generalizing the Patterson--Sullivan theory from the geometric context, one can define analogous measures in the coarse-geometric setting as  
weak-* limits, where $s\searrow \delta_\Gamma$, of the probability measures $\mu_s$ on the compactification $\overline{M}=M\sqcup \partial M$,
with $\mu_s$ given by
\begin{equation}\label{e:PS-ms}
	\mu_s:=\frac{1}{\sum_{h\in \Gamma}
	e^{-s\cdot d_M(ho,o)}}\cdot \sum_{g\in \Gamma} e^{-s\cdot d_M(go,o)}\cdot {\rm Dirac}_{go}.
\end{equation}
It is known that any weak-* limit, $\mu=\lim_{i\to\infty} \mu_{s_i}$ as $s_i\searrow \delta_\Gamma$,
is supported on the boundary $\partial M=\bG$, has no atoms, and any two such limits $\mu$ and $\mu'$
are mutually equivalent with uniformly bounded Radon-Nikodym derivatives $\dd\mu'/\dd\mu\in L^\infty(\mu)$
(cf. \cite{Coo}). 
For general $d\in \scrD_\Gamma$, one does not expect uniqueness for the limit measures, 
but any weak-* limit will do for our purposes, as we are interested only in the \emph{measure class}
of such measures.
This measure class, denoted $[\PSnu]$, is $\Gamma$-invariant.
As the measure class $[\PSnu]$ is atom-free, its square $[\PSnu\times\PSnu]$ is supported on the space
\[
	\dbG:=\setdef{ (\xi,\eta) \in (\bG)^2 }{ \xi\neq \eta }
\]
of distinct pairs of points on the boundary of $\Gamma$.
This space is locally compact, but is not compact.

In the geometric context of Example~\ref{E:main}.(a), $\dbG$ can be identified with the space $T^1M/\bbR$
of unparametrized (but oriented) geodesic lines in $T^1M$ (here $M=\wt{N}$), 
and its extension by $\bbR$ can be identified 
with the parametrized geodesic lines, and thereby with the unit tangent bundle $T^1M$ itself.
This is often called \textit{Hopf parametrization}. 
In this context the $\Gamma$-action on $\partial^2M=\dbG$ preserves an infinite Radon measure, often called
\emph{Bowen--Margulis--Sullivan current}. 
This action is ergodic; this is directly related to the ergodicity
of the geodesic flow on $T^1N$ equipped with the Bowen--Margulis measure, which is usually proved via Hopf argument.
\medskip

In the general coarse-geometric context of Setup~\ref{setup}, 
we do not have a precise analogue for the geodesic flow on $T^1N$, but can consider the
$\Gamma$-action on the boundary $(\bG,[\PSnu])$ and its double $(\dbG,[\PSnu\times\PSnu])$,
and expect them to be ergodic.
This is established in the following theorem;
along the way constructing an analogue of Bowen--Margulis--Sullivan current,
denoted $\BMSm$ below. 
\begin{thm}[Double Ergodicity]\label{T:double-erg}\hfill{}\\
	The diagonal $\Gamma$-action on $(\dbG,[\PSnu\times\PSnu])$ is ergodic. 
	In particular, the $\Gamma$-action on $(\partial\Gamma,[\PSnu])$ is ergodic.
	The measure class $[\PSnu\times\PSnu]$ contains a unique 
	(up to a certain well defined normalization)
	infinite $\Gamma$-invariant Radon measure $\BMSm$.
\end{thm}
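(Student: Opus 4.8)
The plan is to construct an analogue of the (lifted) unit tangent bundle carrying commuting actions of $\Gamma$ and of $\bbR$ --- the ``geodesic flow'' --- to transfer the ergodicity of $\Gamma\acts(\dbG,[\PSnu\times\PSnu])$ to the ergodicity of this flow, and to establish the latter by a Hopf-type argument that exploits the product structure of $\PSnu\times\PSnu$.

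\emph{Construction of $\BMSm$ and of the flow space.} Fix a representative $\PSnu$ of the class. The first ingredient is the quasi-conformality of Patterson--Sullivan measures: the Radon--Nikodym cocycle of $\Gamma\acts(\bG,\PSnu)$ is comparable, with multiplicative constants depending only on $[d]$, to $(g,\xi)\mapsto e^{-\delta_\Gamma\,\beta_\xi(g^{-1}o,o)}$, where $\beta$ is a Busemann-type cocycle. Combined with the coarse transformation rule $\Gprod{g\xi}{g\eta}{o}=\Gprod{\xi}{\eta}{o}-\frac12\bigl(\beta_\xi(g^{-1}o,o)+\beta_\eta(g^{-1}o,o)\bigr)+O(\delta)$, this shows that $e^{2\delta_\Gamma\Gprod{\xi}{\eta}{o}}\,d\PSnu(\xi)\,d\PSnu(\eta)$ is $\Gamma$-invariant up to a bounded multiplicative error; equivalently, the Radon--Nikodym cocycle of $\Gamma\acts(\dbG,\PSnu\times\PSnu)$ is a coboundary up to a bounded error. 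To upgrade this to an honest invariant measure I would pass to a Mineyev-type symmetrization of the metric --- a roughly isometric metric on $\Gamma$ whose Busemann cocycle, hence Gromov product, is genuinely $\Gamma$-equivariant --- and set $d\BMSm(\xi,\eta):=e^{2\delta_\Gamma\Gprod{\xi}{\eta}{o}}\,d\PSnu(\xi)\,d\PSnu(\eta)$. This is then a $\Gamma$-invariant Radon measure in the class $[\PSnu\times\PSnu]$, finite on compact subsets of $\dbG$ because the Gromov product is finite and continuous off the diagonal, and it is \emph{infinite}: by the shadow-lemma (Ahlfors-regularity) estimates for $\PSnu$, for fixed $\xi$ the mass of the level sets $\{\eta:\Gprod{\xi}{\eta}{o}\in[n,n+1]\}$ decays like $e^{-\delta_\Gamma n}$ while the weight $e^{2\delta_\Gamma\Gprod{\xi}{\eta}{o}}$ grows like $e^{2\delta_\Gamma n}$, so the total mass diverges. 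Finally I would form $\mathcal G:=\dbG\times\bbR$, with $\bbR$ acting by translation on the second factor and $\Gamma$ acting by $g\cdot(\xi,\eta,t)=\bigl(g\xi,g\eta,t+c(g,\xi,\eta)\bigr)$ for the Busemann cocycle $c$; with the symmetrized metric this is a genuine action commuting with the translation flow, it is proper and cocompact (here cocompactness of $\Gamma\acts M$ is used), and $\BMSm\otimes\Leb$ is $(\Gamma\times\bbR)$-invariant on $\mathcal G$, descending to a finite flow-invariant measure $m_Y$ on the compact quotient $Y:=\Gamma\backslash\mathcal G$.

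\emph{Reduction to the flow, and the Hopf argument.} Since $\bbR$ acts freely with $\mathcal G/\bbR=\dbG$ and $\Gamma$ acts properly, pulling a function back to $\mathcal G$ and pushing it to $Y$ sets up a bijection between $\Gamma$-invariant measurable functions on $(\dbG,[\PSnu\times\PSnu])$ and flow-invariant measurable functions on $(Y,m_Y)$; so it is enough to prove that the flow $(T^t)$ on $(Y,m_Y)$ is ergodic. I would do this by Hopf's argument. For $\phi\in L^1(Y,m_Y)$ the forward and backward Birkhoff averages $\phi^{\pm}$ exist a.e.\ and agree a.e.\ (each equals the conditional expectation onto the flow-invariant $\sigma$-algebra). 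When $\phi$ is uniformly continuous, two points on a common strong-stable leaf have orbits that converge under $T^t$ as $t\to+\infty$, so $\phi^{+}$ is constant along strong-stable leaves and hence along weak-stable leaves, which in the coordinates $\mathcal G=\bG\times\bG\times\bbR$ are exactly the slices $\{\eta=\mathrm{const}\}$; thus $\phi^{+}$ is the lift of a function of $\eta$ alone. Symmetrically $\phi^{-}$ is a function of $\xi$ alone. Since $\phi^{+}=\phi^{-}$ a.e.\ and $\PSnu\times\PSnu$ is a genuine \emph{product} measure, Fubini forces this function to be a.e.\ constant; the extension from uniformly continuous $\phi$ to arbitrary $\phi\in L^1$ is the standard density-plus-maximal-inequality step. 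Hence $(T^t)$ is ergodic, and therefore so is the $\Gamma$-action on $(\dbG,[\PSnu\times\PSnu])$.

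\emph{Consequences, and the main obstacle.} Ergodicity of $\Gamma\acts(\bG,[\PSnu])$ is then immediate: a $\Gamma$-invariant function on $\bG$ pulls back along a coordinate projection $\dbG\to\bG$ to a $\Gamma$-invariant function on $\dbG$, which is a.e.\ constant. Uniqueness of $\BMSm$ up to scaling follows as well: if $m_1,m_2$ are $\Gamma$-invariant Radon measures in $[\PSnu\times\PSnu]$, then $dm_1/dm_2$ is a positive, locally integrable, $\Gamma$-invariant function, hence a.e.\ constant by ergodicity. The analytic heart --- and the step I expect to be the main obstacle --- is the construction above: in the purely coarse-geometric setting of Setup~\ref{setup}, Busemann functions and Gromov products are only defined up to bounded additive error, so realizing an \emph{exact} model $(\mathcal G,\Gamma,\bbR)$ together with the exact invariant measure $\BMSm\otimes\Leb$ requires a Mineyev-type symmetrization within the equivalence class $[d]$, and one must still verify properness and cocompactness of $\Gamma\acts\mathcal G$ and the shadow-lemma estimates underlying finiteness of $m_Y$ and infiniteness of $\BMSm$. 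The rest is standard once the model is in place; the only further technicality is the maximal-inequality/$L^1$-approximation step in the Hopf argument, for which one needs $Y$ to be a reasonable (locally compact, second countable) flow and the stable/unstable ``foliations'' to carry the absolute continuity that here is automatic from the product form of $\PSnu\times\PSnu$.
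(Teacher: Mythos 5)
Your outline is correct and your Hopf argument is, up to reorganization, the same as the paper's: the paper proves (in the notation of \S\ref{sec:double_ergodicity}) that the Birkhoff limit $\hat f_\infty$ is flip-invariant and, via the contraction Lemma~\ref{L:contraction}, essentially constant in $\eta$ for a.e.\ fixed $\xi$, which together with the product form of $[\PSnu\times\PSnu]$ forces constancy --- exactly your ``$\phi^+$ depends only on $\eta$, $\phi^-$ only on $\xi$, $\phi^+=\phi^-$, Fubini.'' Where you diverge is in the construction of the exact model. You propose to obtain an honest $\Gamma$-invariant $\BMSm$ and a genuine topological flow by replacing $d$ with a Mineyev-type roughly isometric metric with exact Busemann cocycle; the paper deliberately avoids this (see the remark in \S\ref{sub:topological_almost_geodesic_flow}) and instead uses a purely measure-theoretic device, Lemma~\ref{L:bddcoh}: a Borel cocycle $\rho(g,x)$ that is uniformly bounded in $g$ is automatically a coboundary $\phi(gx)-\phi(x)$, via $\phi(x)=-\sup_h\rho(h,x)$. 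Applied to the logarithmic Radon--Nikodym cocycle of $m_o$, this produces the exact invariant density $e^{F}$ in Proposition~\ref{P:invariantBMS}, and applied again to the PS cocycle $\rho$ it produces the exact (measurable, not continuous) cocycle $\tau$ defining the $\Gamma$-action on $\dbG\times\bbR$. This trades your topological regularization for a one-line measure-theoretic one, which is both cleaner and more robust: it sidesteps the question (which you flag but do not resolve) of whether a Mineyev symmetrization can be carried out \emph{within} the given rough-isometry class $[d]$, which is essential since changing $[d]$ changes $\delta_\Gamma$ and $[\PSnu]$. Correspondingly, the paper does not claim topological cocompactness of $\Gamma\acts\dbG\times\bbR$ --- only the measurable statement (Proposition~\ref{P:mgf}(e)) that a precompact, finite-measure fundamental domain exists, which is all the Hopf/Birkhoff argument requires. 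Your derivations of the two ``in particular'' clauses (ergodicity on $\bG$ via pulling back through a coordinate projection, and uniqueness of $\BMSm$ up to scalar via ergodicity applied to the Radon--Nikodym derivative of two candidates) are correct and match the paper's intent, with the scale pinned down by the normalization of Definition~\ref{D:BMSnormaliztion}.
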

The following result is an ergodic theorem for the 
infinite measure preserving action $\Gamma\acts (\dbG,\BMSm)$.
It is formulated in purely geometric terms, using the {\bf almost Busemann cocycle}
(see \S\ref{sub:topological_almost_geodesic_flow}) defined as:
\[
	\sigma(g,\xi):=\limsup_{x\to\xi}\ (d_M(g^{-1}o,x)-d_M(o,x))\qquad (\xi\in\bG,\ g\in\Gamma).
\]
\begin{thm}[Ergodic Theorem]\label{T:erg}\hfill{}\\
	For any $f\in L^1(\dbG,\BMSm)$ for $\BMSm$-a.e. $(\xi,\eta)\in\dbG$ one has the convergences
\[ \lim_{T\to\infty}\frac{1}{T}\cdot \sum_{\setdef{g\in\Gamma}{\sigma(g,\xi)\in[-T,0]}} f(g\xi,g\eta)=
\int_\dbG f \dd \BMSm, \]
\[ \lim_{T\to\infty}\frac{1}{T}\cdot \sum_{\setdef{g\in\Gamma}{\sigma(g,\eta)\in[0,T]}} f(g\xi,g\eta)=
\int_\dbG f \dd \BMSm \]
and
\[\lim_{T\to\infty} \frac{1}{T}\cdot \sum_{\setdef{g\in\Gamma}{\frac{1}{2}(\sigma(g,\eta)-\sigma(g,\xi))\in[0,T]}} f(g\xi,g\eta)=
\int_\dbG f \dd \BMSm. \]
\end{thm}
Consider the above formulas for a continuous function with compact support $f\in C_c(\dbG)$.
Note that while for sufficiently large $T$ the set 
$\setdef{g\in\Gamma}{\sigma(g,\eta)\in[0,T]}$ is infinite, 
only finitely many elements from
this set contribute non-zero summands $f(g\xi,g\eta)$. 
Moreover, the sums grow linearly in $T$, so changing $\sigma$ in a bounded way has no effect on the limit.
This explicit formula illustrates how the BMS measure $\BMSm$ can be directly derived from the metric $d$ on $\Gamma$,
and that it depends only on the class $[d]\in\scrD_\Gamma$ of equivalent metrics.

\medskip

The double ergodicity stated in Theorem~\ref{T:double-erg} can be deduced 
also from the following finer ergodicity property
(the notions that appear in the following result are defined and discussed in \S\ref{sub:erggen}).
\begin{thm}[Metric Ergodicity] \label{T:rSAT}\hfill{}\\
	Given a probability measure $\nu\in [\PSnu]$ in the Patterson--Sullivan class,
	the projection 
	\[
		\pr_i:(\bG\times\bG,\nu\times\nu)\ \overto{}\ (\bG,\nu),\qquad \pr_i(\xi_1,\xi_2)= \xi_i\qquad (i=1,2)
	\]
	are relatively (SAT), and therefore are relatively metrically ergodic.
	In particular, the diagonal $\Gamma$ action on $(\bG\times\bG,\nu\times\nu)$ is metrically ergodic.
\end{thm}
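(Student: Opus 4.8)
The statement asserts that the coordinate projections $\pr_i\colon(\bG\times\bG,\nu\times\nu)\to(\bG,\nu)$ are relatively (SAT) — relatively strongly approximately transitive — with respect to the $\Gamma$-action, and deduces relative metric ergodicity and hence metric ergodicity of $\Gamma\acts(\bG\times\bG,\nu\times\nu)$. Let me think about how I would attack this.

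The plan is to reduce both assertions to a single fact, the $\SAT$ property of the boundary action $\Gamma\acts(\bG,[\PSnu])$, and then to quote the soft implications developed in \S\ref{sub:erggen}: that relative $\SAT$ implies relative metric ergodicity, and that relative metric ergodicity of the two coordinate projections forces metric ergodicity of the product. For the reduction one observes that $\pr_1$ carries a \emph{product} fibred structure: disintegrating $\nu\times\nu$ over $\pr_1$, the conditional measure on the fibre $\{\xi\}\times\bG$ is $\Dirac_\xi\times\nu$, and identifying each fibre with $(\bG,\nu)$ via the second coordinate conjugates the residual $\Gamma$-action — which sends the fibre over $\xi$ to the fibre over $g\xi$ — into the ordinary action $g\colon\eta\mapsto g\eta$ on $\bG$, with trivial cocycle. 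Hence relative $\SAT$ of $\pr_1$ reduces to, and given this product structure is equivalent to, $\SAT$ of $\Gamma\acts(\bG,\nu)$; the same applies to $\pr_2$, which is why the two projections can be treated on an equal footing.

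So it remains to show that $\Gamma\acts(\bG,\nu)$ is $\SAT$: every absolutely continuous probability $\lambda\ll\nu$ on $\bG$ admits elements $g_n\in\Gamma$ with $(g_n)_*\lambda\to\Dirac_\zeta$ weak-$*$ for some $\zeta\in\bG$. For this I would establish the contracting (``North--South'') behaviour of the $\Gamma$-action on $(\bG,\nu)$: since $\Gamma$ acts cocompactly and is non-elementary its limit set is all of $\bG$, so for any $\zeta\in\bG$ one can pick $g_n$ with $g_no\to\zeta$; the Patterson--Sullivan cocycle then forces the density $d((g_n)_*\nu)/d\nu$, which up to a bounded multiplicative constant depending only on $[d]$ equals $e^{-\delta_\Gamma\sigma(g_n^{-1},\cdot)}$ for the almost Busemann cocycle $\sigma$, to concentrate all mass near $\zeta=\lim g_no$, the complementary ``repelling'' point $\lim g_n^{-1}o$ being a single $\nu$-null point; thus $(g_n)_*\nu\to\Dirac_\zeta$, and since $d\lambda/d\nu\in L^1(\nu)$ the same concentration holds for $\lambda$. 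With the $\SAT$ property of $\Gamma\acts(\bG,\nu)$ in hand, Theorem~\ref{T:rSAT} follows by the generalities of \S\ref{sub:erggen}: relative $\SAT$ gives relative metric ergodicity of each $\pr_i$, whence any $\Gamma$-equivariant map from $(\bG\times\bG,\nu\times\nu)$ to a separable metric space with isometric $\Gamma$-action is independent of each coordinate separately — and therefore constant.

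The step I expect to be the main obstacle is precisely the contraction of $(g_n)_*\nu$ to a Dirac in the merely $\delta$-hyperbolic, quasi-convex generality of Setup~\ref{setup}. In the geometric Examples~\ref{E:main}(a),(c) the North--South dynamics on the Gromov boundary and the quasi-invariance of the Patterson--Sullivan measure under the genuine Busemann cocycle are classical, but here Busemann functions are only cocycles up to bounded error, ``geodesics'' are only quasi-geodesics, the Patterson--Sullivan measure is a non-unique weak-$*$ limit whose Radon--Nikodym behaviour is controlled only up to bounded multiplicative constants, and one must know in addition that $\nu$-a.e.\ boundary point is conical so that $\nu$-typical points are genuinely contracted rather than escaping along the quasi-geodesic slack. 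Supplying these estimates is exactly the purpose of the topological almost geodesic flow and the analysis of $\sigma$ in \S\ref{sub:topological_almost_geodesic_flow}; once they are available, the remaining steps — from the contraction property to $\SAT$, to relative metric ergodicity, to metric ergodicity of the product — are routine and uniform in the two coordinates.
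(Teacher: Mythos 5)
The reduction at the heart of your proposal---that relative $\SAT$ of $\pr_1$ ``reduces to, and given the product structure is equivalent to, $\SAT$ of $\Gamma\acts(\bG,\nu)$''---is incorrect, and this is not a repairable detail but the main gap. Your observation about the disintegration is fine: the fibre over $\xi$ is $\{\xi\}\times\bG$ with conditional measure $\nu$ and the residual map $\{\xi\}\times\bG\to\{g\xi\}\times\bG$ is $\eta\mapsto g\eta$. But relative $\SAT$ of $\pr_1$ is a \emph{joint} property of base and fibre: given $A\subset\dbG$ of positive measure you must produce a \emph{single} $g\in\Gamma$ and a positive-measure set $B$ of base points $\xi$ such that $g^{-1}$ simultaneously pushes each of the (varying!) slices $A^+_\xi$ to nearly full $\nu$-measure, while $\xi$ and its $g$-translate both remain where they need to be. $\SAT$ of the boundary action only tells you that for each \emph{fixed} positive-measure set $A_0\subset\bG$ there is \emph{some} $g$ (depending on $A_0$) with $\nu(gA_0)>1-\epsilon$; applying this fibrewise yields $g_\xi$ depending on $\xi$, and nothing in the $\SAT$ axiom lets you make this choice constant on a positive-measure set of $\xi$'s. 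One can check directly that your claimed equivalence fails: relative $\SAT$ of $\pr_1$ does imply $\SAT$ of $\bG$ (test with $A=\bG\times A_0$), but the converse is false in general for a product projection with a diagonal action, precisely because of this simultaneity requirement. Indeed, if $\SAT$ of $\bG$ alone implied the conclusion, Theorem~\ref{T:double-erg} (double ergodicity) would be an immediate corollary of classical Patterson--Sullivan theory, which it is not.

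What actually supplies the simultaneity in the paper's proof is exactly the machinery your sketch treats as an optional technical nuisance. The measurable geodesic flow parametrizes, for $\BMm$-a.e.\ $x$ in the fundamental domain $X\subset\dbG\times\bbR$, a specific sequence $\gamma_{n,x}\in\Gamma$ that contracts along the quasi-geodesic from $x_-$ to $x_+$; the Lebesgue differentiation lemma (Lemma~\ref{L:Lebesgue}) shows that for \emph{a.e.}\ $x$ these $\gamma_{n,x}^{-1}$ push $\nu$ toward $\Dirac_{x_+}$ with quantitative control coming from the Ahlfors regularity of $\nu$ and the estimate \eqref{e:sigma-estimate}; and Poincar\'e recurrence for the finite-measure flow $(X,\BMm,\phi^\bbR)$ is what finally produces a single $n$, a single $g=\gamma_{n,x}^{-1}$, and a positive-measure set of base points $\xi=x_-$ all satisfying the required estimate at once. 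Your plan mentions contraction toward a fixed $\zeta\in\bG$ but never addresses how to choose one $g$ that works for a positive-measure family of fibres nor how to keep the base point in the support after translation; both are handled in the paper by Poincar\'e recurrence, which does not appear in your proposal. Without that step the argument proves only $\SAT$ of the single boundary action---true, classical, and strictly weaker than the theorem.
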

Metric ergodicity is a powerful property. 
It will be used to show the following freeness result.

\begin{thm}[Freeness of the actions] \label{T:ess-free}\hfill{}\\
	The $\Gamma$-actions on $(\dbG,\BMSm)$ and on $(\bG,[\PSnu])$ are essentially free.
\end{thm}

Another consequence of Theorem~\ref{T:rSAT} is the following.
\begin{cor}[Weak Mixing]\label{C:WM}\hfill{}\\
	The action $\Gamma\acts (\dbG,\BMSm)$ is weakly mixing, in the following sense:
	for any ergodic measure-preserving $\Gamma$-action on a probability space $(\Omega,\omega)$
	the diagonal $\Gamma$-action on $(\dbG\times \Omega,\BMSm\times\omega)$ is ergodic. 
\end{cor}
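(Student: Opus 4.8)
The plan is to reduce Corollary~\ref{C:WM} to the metric ergodicity of $(\bG\times\bG,\nu\times\nu)$ asserted at the end of Theorem~\ref{T:rSAT}, and then to invoke the standard principle that a metrically ergodic nonsingular action remains ergodic after multiplication by an arbitrary ergodic p.m.p.\ action. Since the dynamical substance is already packaged in Theorem~\ref{T:rSAT}, what remains is essentially a measure-class bookkeeping step followed by a soft functional-analytic argument.

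First I would fix a probability measure $\nu\in[\PSnu]$ and observe that, $\nu$ being atomless, the diagonal of $\bG\times\bG$ is $(\nu\times\nu)$-null; hence $\dbG$ is a conull $\Gamma$-invariant subset of $(\bG\times\bG,\nu\times\nu)$, and the restriction of $\nu\times\nu$ to $\dbG$ represents the class $[\PSnu\times\PSnu]=[\BMSm]$. Because ergodicity of a nonsingular action depends only on the quasi-invariant measure class, for any ergodic p.m.p.\ $\Gamma$-space $(\Omega,\omega)$ the ergodicity of $\Gamma\acts(\dbG\times\Omega,\BMSm\times\omega)$ is equivalent to the ergodicity of $\Gamma\acts(\dbG\times\Omega,(\nu\times\nu)\!\!\restriction_{\dbG}\times\,\omega)$, and the latter space is conull in $(\bG\times\bG\times\Omega,\ \nu\times\nu\times\omega)$. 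So it suffices to show that multiplying the metrically ergodic space $(\bG\times\bG,\nu\times\nu)$ by $(\Omega,\omega)$ produces an ergodic diagonal action.

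To do that I would take a $\Gamma$-invariant $f\in L^\infty(\bG\times\bG\times\Omega)$ and form the measurable map $F\colon \bG\times\bG\to L^2(\Omega,\omega)$, $F(\xi,\eta)=f(\xi,\eta,\,\cdot\,)$, where $\Gamma$ acts on $L^2(\Omega,\omega)$ by the Koopman representation --- a continuous action by isometries on a separable metric space, since $\Gamma$ is countable and $(\Omega,\omega)$ is standard. Invariance of $f$ is precisely equivariance of $F$, so metric ergodicity of $(\bG\times\bG,\nu\times\nu)$ forces $F$ to be $(\nu\times\nu)$-a.e.\ equal to a single $\Gamma$-fixed vector $v\in L^2(\Omega,\omega)$; a $\Gamma$-fixed vector is a $\Gamma$-invariant function on $\Omega$, hence constant by ergodicity of $\omega$. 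Thus $f$ is essentially constant, the product action is ergodic, and the reduction of the previous paragraph finishes the proof.

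I do not expect a genuine obstacle at this stage: the real work --- relative $\SAT$ of the factor maps $\pr_i$, hence relative metric ergodicity, hence metric ergodicity of $(\bG\times\bG,\nu\times\nu)$ --- is carried out in the proof of Theorem~\ref{T:rSAT}. The only points demanding (routine) care are the measure-class manipulations above, in particular the remark that the weak-mixing property as phrased here is an invariant of the class $[\BMSm]$ and does not depend on the chosen Radon representative, and the standard verification that $F$ is measurable for bounded $f$ over a standard probability space.
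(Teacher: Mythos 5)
Your proof is correct and follows essentially the same route as the paper, which simply combines Theorem~\ref{T:rSAT} with Lemma~\ref{P:SAT-erg}; the weak-mixing step you carry out inline (metric ergodicity of $(\bG\times\bG,\nu\times\nu)$ plus a Koopman-representation argument) is precisely what Lemma~\ref{P:SAT-erg} packages, your only variation being to map into $L^2(\Omega,\omega)$ directly rather than into the unit sphere of $L^2_0(\Omega,\omega)$ as the paper does. The measure-class reduction from $[\BMSm]$ to $\nu\times\nu\restriction_{\dbG}$ is the standard bookkeeping the paper leaves implicit, and your handling of it is fine.
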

The following is the corresponding improvement of Theorem~\ref{T:erg}.
\begin{thm}[General Ergodic Theorem]\label{T:generg}\hfill{}\\
For any ergodic measure-preserving $\Gamma$-action on a probability space $(\Omega,\omega)$
and for any $f\in L^1(\dbG\times \Omega,\BMSm\times \omega)$ for $\BMSm\times \omega$-a.e. $(\xi,\eta,w)\in\dbG\times \Omega$ one has the convergences
\[ \lim_{T\to\infty}\frac{1}{T}\cdot \sum_{\setdef{g\in\Gamma}{\sigma(g,\xi)\in[-T,0]}} f(g\xi,g\eta,gw)=
\int_{\dbG\times \Omega} f \dd \BMSm\times \omega, \]
\[ \lim_{T\to\infty}\frac{1}{T}\cdot \sum_{\setdef{g\in\Gamma}{\sigma(g,\eta)\in[0,T]}} f(g\xi,g\eta,gw)=
\int_{\dbG\times \Omega} f \dd \BMSm\times \omega \]
and
\[\lim_{T\to\infty} \frac{1}{T}\cdot \sum_{\setdef{g\in\Gamma}{\frac{1}{2}(\sigma(g,\eta)-\sigma(g,\xi))\in[0,T]}} f(g\xi,g\eta,gw)=
\int_{\dbG\times \Omega} f \dd \BMSm\times \omega. \]
\end{thm}
By taking in Theorem~\ref{T:generg} function 
$f=\psi\cdot\phi$ for $\psi\in L^1(\dbG,\BMSm)$ and
$\phi\in L^1(\Omega,\omega)$, we get 
a scheme for ergodic theorems for probability measure preserving actions of hyperbolic groups.
The following is an illustration of this idea obtained for the case $\psi(\xi,\eta)=e^{-\delta_\Gamma\cdot\Gprod{\xi}{\eta}{o}}$.
\begin{cor}[Averaging Scheme]\label{C:avaragescheme}\hfill{}\\
Fix an ergodic measure-preserving $\Gamma$-action on a standard probability space $(\Omega,\omega)$.
For every $\phi\in L^1(\Omega,\omega)$,
for $\BMSm$-a.e. $(\xi,\eta)\in\dbG$,
for $\omega$-a.e. $w \in \Omega$,
one has the convergences
\[ \lim_{T\to\infty}\frac{1}{T}\cdot\frac{ \sum_{\setdef{g\in\Gamma}{\sigma(g,\eta)\in[0,T]}} 
e^{-\delta_\Gamma\cdot\Gprod{\xi}{\eta}{o}}\phi(gw)}{\sum_{\setdef{g\in\Gamma}{\sigma(g,\eta)\in[0,T]}} 
e^{-\delta_\Gamma\cdot\Gprod{\xi}{\eta}{o}}}=
\int_{\Omega} \phi \dd\omega. \]
\end{cor}

In \S\ref{sub:cocycles} we will introduce a certain measurable
cocycle $\tau$, see Equation~(\ref{e:rho-and-F}).
The ergodic $\Gamma$-action on $(\dbG,\BMSm)$ is
extended to a $\Gamma$-action on $\dbG\times \bbR$ 
preserving the infinite measure $\BMSm\times \Leb$, where $\Leb$ is the Lebesgue
measure on $\bbR$,
using the formula
\[ 
    g\cdot (\xi,\eta,t)=(g\xi,g\eta,t+\tau(g,\xi,\eta), \mbox{ for } 
    g\in \Gamma, ~(\xi,\eta)\in \dbG \mbox{ and } t\in \bbR. 
\]
This extension is no longer ergodic.
On the contrary, we have the following.

\begin{thm}[Fundamental domain] \label{C:ess-free}\hfill{}\\
The action of $\Gamma$ on $\dbG\times \bbR$ admits a measured fundamental domain,
that is a measurable subset $\hat{X}\subset \dbG\times \bbR$ such that for $\BMSm\times \Leb$-a.e
point $(\xi,\eta,t) \in \dbG\times \bbR$ there exists a unique $g\in \Gamma$ such that $(g\xi,g\eta,t+\tau(g,\xi,\eta))\in \hat{X}$.
\end{thm}

The normalization of $\BMSm$, mentioned in Theorem~\ref{T:double-erg}, is chosen to ensure that this fundamental domain has measure one: $\BMSm\times \Leb(\hat{X})=1$.
The measure space $(\dbG\times \bbR,\BMSm\times\Leb)$ with the measure-preserving action of $\Gamma\times\bbR$
defines an $\bbR$-flow, denoted $\phi^\bbR$, on the quotient probability space
\[
	(X,\BMm):=(\dbG\times \bbR,\BMSm\times \Leb)/\Gamma.
\] 
This is a measurable analogue of the geodesic flow on the unit tangent bundle $T^1N$ equipped 
with the Bowen--Margulis measure (hence the notation $\BMm$) in the case of the geodesic flow on 
negatively curved manifolds.

We call the flow $(X,\BMm,\phi^\bbR)$ {\em the measured geodesic flow} associated 
with the pair $(\Gamma,[d])$.
This is a fundamental object of the theory and the last section of this paper will be 
devoted to its study.
It follows from Theorem~\ref{T:double-erg} that the measured geodesic flow is always ergodic.
This fact extends ergodicity results that are well known in the geometric 
Examples~\ref{E:main}.(a).
However, in contrast to the geometric examples, this flow may fail to be mixing, or even weakly mixing, in the general setting.
In fact, in the case of a word metric on a free group the flow has a rotation quotient.
Motivated by this phenomenon, we introduce a new object.
We let $(Z,\Mm,\psi^\bbR)$ be the {\em Mackey range of the Radon-Nikodym cocycle of the $\Gamma$-action on 
$(\bG,\PSnu)$} and observe that it is naturally a factor of the measured geodesic flow $(X,\BMm,\phi^\bbR)$ via a factor map
$(X,\BMm) \to (Z,\Mm)$.

\begin{cor} \label{cor:Mackey}
The factor map $(X,\BMm) \to (Z,\Mm)$ is relatively metrically ergodic.
In particular, it is relatively weakly mixing.
\end{cor}

Conjecturally we have that $(X,\BMm) \to (Z,\Mm)$ is relatively mixing over a rotation flow.

\medskip

By Theorem~\ref{C:ess-free}, the flow $(X,m,\phi^\bbR)$ is endowed with a canonical cohomology class of measurable cocycles
\[
	\gamma:\bbR\times X \overto{} \Gamma.
\]

\begin{cor} \label{cor:relmetric}
Given an isometric action of $\Gamma$ on a separable metric space $S$ and a $\Gamma$-equivariant map $f:X\to S$,
that is a map satisfying for every $t\in \bbR$ and a.e $x\in X$, 
\[ f(\phi^t x)=\gamma_{t,x}f(x), \]
there exists a $\Gamma$-fixed point $s\in S$ such that for a.e $x\in X$, $f(x)=s$.
\end{cor}

Given any ergodic probability measure preserving action $\Gamma\acts (\Omega,\omega)$ one can define  
the \textbf{induced flow} $\phi_\gamma^\bbR$ on $X\times \Omega$ by $\phi_\gamma^t(x,p)=(\phi^t(x),\gamma(t,x).p)$.
This flow can also be constructed as the $\bbR$-flow on the quotient
\[
	(\dbG\times \bbR\times\Omega,\BMSm\times\Leb\times\omega)/\Gamma.
\]
Either by Corollary~\ref{C:WM} or Corollary~\ref{cor:relmetric} we get the following.
\begin{cor}\label{C:induced}\hfill{}\\
	Let $(X,\BMm,\phi^\bbR)$ be the measurable geodesic flow associated 
	to $[d]\in\scrD_\Gamma$, let $\Gamma\acts (\Omega,\omega)$ be an ergodic measure-preserving action on a probability space.
	Then the induced flow $\phi_\gamma^\bbR$ on $(X\times\Omega,\BMm\times \omega)$ is ergodic.
\end{cor}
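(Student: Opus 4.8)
The plan is to pass to the $\Gamma$-cover, use translation invariance in the $\bbR$-coordinate to collapse that coordinate, and then invoke Corollary~\ref{C:WM}. Recall from the construction preceding the statement that, with the scaling of $\BMSm$ fixed so that $\BMm$ is a probability measure, the induced flow $\phi_c^\bbR$ on $(X\times\Omega,\BMm\times\omega)$ is the $\bbR$-flow on the quotient $(\dbG\times\bbR\times\Omega,\BMSm\times\Leb\times\omega)/\Gamma$, where $\Gamma$ acts diagonally — by the boundary action on $\dbG$, by $s\mapsto s+\beta(g,\zeta)$ on $\bbR$ for the Busemann-type cocycle $\beta$ that defines the measurable geodesic flow, and by the given p.m.p.\ action on $\Omega$ — and the $\bbR$-flow is $\psi^t\colon(\zeta,s,p)\mapsto(\zeta,s+t,p)$. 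Since $\beta$ does not depend on the $\bbR$-coordinate (equivalently, the $\Gamma$-action on $\dbG\times\bbR$ commutes with $\psi^\bbR$ and covers the boundary action), $\psi^\bbR$ commutes with $\Gamma$; and since $\Gamma\acts\dbG\times\bbR$ admits a fundamental domain (of finite measure, as noted above), the quotient map is a faithful correspondence between $\phi_c^\bbR$-invariant measurable subsets of $X\times\Omega$ and subsets of $\dbG\times\bbR\times\Omega$ that are invariant under both $\Gamma$ and $\psi^\bbR$, compatibly with being null or conull.

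So let $\tilde A\subseteq\dbG\times\bbR\times\Omega$ be such a subset. Being $\psi^\bbR$-invariant, $\tilde A$ coincides up to a null set with $\pi^{-1}(B)$, where $\pi\colon\dbG\times\bbR\times\Omega\to\dbG\times\Omega$ forgets the $\bbR$-coordinate and $B\subseteq\dbG\times\Omega$ is measurable; this is a routine Fubini argument in the $\sigma$-finite setting. For $g\in\Gamma$ one computes $g\cdot\pi^{-1}(B)=\pi^{-1}(g\cdot B)$, where $g\cdot B$ refers to the diagonal $\Gamma$-action on $\dbG\times\Omega$ (boundary action together with the given action on $\Omega$), because the $\Gamma$-action moves the $\bbR$-coordinate only by a translation. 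Hence $\Gamma$-invariance of $\tilde A$ forces $B$ to be, up to a null set, a $\Gamma$-invariant subset of $(\dbG\times\Omega,\BMSm\times\omega)$.

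Now apply Corollary~\ref{C:WM} to the ergodic p.m.p.\ $\Gamma$-space $(\Omega,\omega)$: it asserts exactly that the diagonal $\Gamma$-action on $(\dbG\times\Omega,\BMSm\times\omega)$ is ergodic. Therefore $B$ is null or conull, whence $\tilde A$ is, whence $A$ is, and $\phi_c^\bbR$ is ergodic. The argument carries no new dynamical content — it is entirely a reduction to Corollary~\ref{C:WM} — so the only genuine work, and the place where care is needed, is the measure-theoretic bookkeeping: justifying the faithful correspondence between $\phi_c^\bbR$-invariant sets on the quotient and $(\Gamma\times\psi^\bbR)$-invariant sets on the cover via the finite-measure fundamental domain and the standardness of all spaces involved, and justifying the Fubini step that collapses the infinite $\bbR$-factor, so that $\psi^\bbR$-invariance is fully exploited before $\Gamma$-invariance is brought in.
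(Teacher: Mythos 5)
Your proof is correct and follows essentially the same route as the paper's: lift a $\phi_c^\bbR$-invariant object to the cover $\dbG\times\bbR\times\Omega$, use $\Phi^\bbR$-invariance to collapse the $\bbR$-factor, and reduce to the ergodicity of $\Gamma\acts(\dbG\times\Omega,\BMSm\times\omega)$ from Corollary~\ref{C:WM}. The only cosmetic difference is that you phrase the argument with invariant sets and an explicit Fubini step where the paper works with invariant functions; the content is identical.
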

If $\Gamma$ is a uniform (i.e. cocompact) lattice in a rank-one simple Lie group
$G=\Isom(\mathbf{H}^n_K)$ with $K=\mathbb{R}$, $\mathbb{C}$, $\mathbb{H}$,
or $\mathbb{O}$ with $n=2$, and $M=\mathbf{H}^n_K$ is its associated symmetric space,
then one can deduce the above result using Moore ergodicity theorem.
In this case the flow $\phi^\bbR$ is Bernoulli and the induced flow is 
a K-flow (\cite{Dani}, \cite{FW}).  
However, Corollary~\ref{C:induced} seems to be new even for the geodesic flow on negatively 
curved manifolds that are not locally symmetric.

\begin{remark}
See Diagram~(\ref{e:finaldiagram}) and Remark~\ref{rem:important} in Section~\ref{sec:mackey} for a summary of many of the constructions we introduce in this paper and their relations.
\end{remark}

\medskip

\subsection*{Organization of this paper}
In section \ref{sec:ETpreliminaries} we give some necessary ergodic theoretical preliminaries: 
	a discussion of abstract ergodicity properties in \S\ref{sub:erggen},
a discussion of the space of ergodic components in \S\ref{sub:ergcomp}\ and a discussion of the Mackey range of a cocycle in \S\ref{subsec:mackeyrange}.

In section \ref{sec:Gpreliminaries} we give some necessary geometrical preliminaries:
	our notations and conventions for Gromov hyperbolic geometries in \S\ref{sub:gromov_hyperbolic_spaces},
    a construction of an auxiliary \emph{topological almost geodesic flow} in \S\ref{sub:topological_almost_geodesic_flow}
and the important Lemma~\ref{L:contraction}, which describes in the above setting the well known contraction dynamics of the geodesic flow,
in \S\ref{sub:a_geometric_lemma}.

	Section~\ref{sec:measurable_constructions} describes some measure theoretic constructions, including the Bowen-Margulis-Sullivan $\Gamma$-invariant measure $\BMSm$ on $\dbG$ and the measured geodesic flow $(X,\BMm,\phi^\bbR)$.
	
	Section~\ref{sec:erg-via-Ldiff} contains a proof of Theorem~\ref{T:rSAT} based on a Lebesgue differentiation argument.
Theorem~\ref{T:ess-free},	Corollary~\ref{C:WM} and Theorem~\ref{C:ess-free} 
are then 
deduced from Theorem~\ref{T:rSAT}.

% Section~\ref{sec:bowen_margulis_sullivan_measure}
% is devoted to the proof of Theorem~\ref{T:Bowen},
% that is the expression of the BMS measure as a limit of atomic measures.

Section~\ref{sec:double_ergodicity} contains the proofs of  Theorem~\ref{T:generg} 
and its Corollary~\ref{C:avaragescheme},
giving along the way an alterantive proof of
Corollary~\ref{C:WM}.
Note that Corollary~\ref{C:WM} implies Theorem~\ref{T:double-erg}
and Theorem~\ref{T:generg} implies Theorem~\ref{T:erg}
as special cases.

Finally, in Section~\ref{sec:mackey} we discuss further properties of the measured geodesic flow $(X,\BMSm,\phi^\bbR)$,
we construct its quotient $(Z,\Mm,\psi^\bbR)$ and prove Corollaries~\ref{cor:Mackey}, \ref{cor:relmetric} and \ref{C:induced}.
We will also present Diagram~(\ref{e:finaldiagram}) and Remark~\ref{rem:important} which summarize many of the constructions we introduce in this paper and their relations.

\subsection*{Some remarks}
	This paper continues \cite{F:cg} and fixes two flaws in the latter. 
	First, in the definition of the coarse-geometric context it is essential to explicitly require
	quasi-convexity (as in framework \ref{setup}), because it is not implied by being quasi-isometric to a word metric.
	% (counter example: $d'=d+\sqrt{d}$ where $d$ is a quasi-convex metric).
	
	Secondly, it has been assumed in \cite{F:cg} that the double ergodicity (Theorem~\ref{T:double-erg}) 
	was known in the broad coarse-geometric context; but apparently the ergodicity 
    phenomenon was addressed before in this generality.
	One of the motivations for the present paper was to close this gap, adding along the way some details
	claimed in \cite{F:cg}, such as the existence of the finite measure fundamental domain (Proposition~\ref{P:mgf}).
	% and the proof of Theorem~\ref{T:Bowen}.
	
	The present paper was motivated by the beautiful work of Garncarek \cite{Ga}, where 
	the double ergodicity (as in Theorem~\ref{T:double-erg}) is needed
	for the classification of the  irreducible unitary representation $\Gamma\to \mathcal{U}(L^2(\bG,[\PSnu]))$,
	extending \cite{Bader+Muchnik}.

    An unpublished earlier version of this paper was circulated and posted on the arXiv in 2017 and it already has a few citation. We warn the reader that the current version is not identical to this earlier version - we have added a 
    few more details and several new statements applications.
    Let us also emphasize that we do not make an attempt here to cover the latest literature on the subject, but we should at least point out the recent
    papers \cite{Reyes} and \cite{CR, CR2} by Reyes and Cantrell--Reyes.
	
\subsection*{Acknowledgements}
	We would like to thank warmly Lukasz Garncarek and Amos Nevo 
	for interesting conversations regarding the subject matter.
%drawing our attention to the gap in the literature regarding the double ergodicity problem in \cite{F:cg} and encouraging us to provide a proof.
	We would also like to acknowledge grant support.
	U. Bader was supported in part by the ISF-Moked grant 2095/15 and the ERC grant 306706.
	A. Furman was supported in part by the NSF grant DMS 2005493, 2020-2025.
	Both authors were supported by BSF USA-Israel Grant 2018258. 
	
% section introduction_and_statement_of_the_main_results (end)

\section{Ergodic theoretical preliminaries} % (fold)
\label{sec:ETpreliminaries}

\subsection{Notions of Ergodicity} % (fold)
\label{sub:erggen}\hfill{}\\
In this subsection we describe several notions of ergodicity for non-singular group actions,
that are discussed in \cite{BF:sr-note}, \cite{BF:icm}, 
see also \cite{GW}.
For our discussion we fix a locally compact secondly countable group $G$.

\medskip

Given a probability measure $\mu$ on a standard Borel space $(X,\mathcal{X})$ we denote by $[\mu]$ the measure class
of $\mu$, i.e. all probability measures $\mu'$ on $(X,\mathcal{X})$ with $\mu'\sim \mu$.
A measurable action $G\times X\to X$ on a standard probability space $(X,\mu)$
is \textbf{measure-class-preserving} (or \textbf{non-singular}) 
if $[g_*\mu]=[\mu]$, i.e. if $g_*\mu\sim\mu$, for every $g\in G$.
We shall also use the notation $G\acts (X,[\mu])$ and consider $(X,[\mu],G)$ as a \textbf{measure class preserving system}.
Let $G\acts (X,[\mu])$ and $G\acts (Y,[\nu])$ be two measure-class-preserving $G$-actions.
A \textbf{quotient} map is a measurable map $p:X\to Y$ with $p_*\mu\sim\nu$ so that $p\circ g=g\circ p$ 
$\mu$-a.e. on $X$ for every $g\in G$.
A measurable set $A\subset X$ is called $G$-\textbf{invariant} if $\mu(gA\triangle A)=0$ for every $g\in G$.

\medskip

\begin{defn}
A measure-class-preserving action $G\acts (X,[\mu])$ is said to be:
\begin{itemize}
	\item
		\textbf{ergodic} if the only $G$-invariant measurable subsets $A\subset X$ are null or conull:
		$\mu(A)=0$ or $\mu(A^c)=0$.
	\item
		\textbf{weakly mixing} if for any ergodic probability measure preserving (p.m.p.) action $G\acts(\Omega,\omega)$ the diagonal $G$-action 
		on $(X\times \Omega,\mu\times\omega)$ is ergodic. 
	\item
		\textbf{metrically ergodic} if given 
		any separable metric space $(S,d)$ and a continuous homomorphism
		$\pi:G\to\Isom(S,d)$, the only a.e. $G$-equivariant measurable maps $F:X\to S$ 
		are essentially constant ones. % with the value being a $\pi(G)$-fixed point.
	\item
		\textbf{strongly almost transitive} $\SAT$ if for any measurable $A\subset X$ with $\mu(A)>0$
		and any $\epsilon>0$, there is $g\in G$ with $\mu(gA)>1-\epsilon$.
\end{itemize}
For a $G$-quotient map $p:(X,[\mu])\to (Y,[\nu])$, we say that the quotient map $p$ is
\begin{itemize}
		\item
		\textbf{relatively ergodic} if the only $G$-invariant measurable sets $A\subset X$
		are, up to null sets, pull-backs of $G$-invariant measurable subsets $B\subset Y$.
	\item
		\textbf{relatively weakly mixing} if for any ergodic probability measure preserving $G$-extension 
$(\Omega,\omega)\to (Y,\nu)$ the 
fiber product map $X\times_Y\Omega\to Y$ 
 is relatively ergodic. 
		\item
		\textbf{relatively metrically ergodic}, if for any measurable family $\{(S_y,d_y)\}_{y\in Y}$
		of (separable) metric spaces, with a measurable family 
		\[
			\left\{\pi_y(g):(S_y,d_y)\overto{} (S_{gy},d_{gy})\right\}\qquad (g\in G,\ y\in Y)
		\]
		of isometries with $\pi_y(gh)=\pi_{hy}(g)\circ \pi_y(h)$,
		the only a.e. defined $G$-equivariant measurable maps $\{ F(x)\in S_{p(x)}\}_{x\in X}$ are pull-backs $F=f\circ p$
		of measurable $G$-equivariant family $\{ f(y)\in S_{y}\}_{y\in Y}$.
		\item
		\textbf{relatively (SAT)} if for any measurable set $A\subset X$ with $\mu(A)>0$ and $\epsilon>0$,
		there is $g\in G$ and a positive $\nu$-measure subset $B\subset Y$ so that
		\[
			\mu(A\cap gA\cap p^{-1}(B))>(1-\epsilon)\cdot\mu(p^{-1}(B)).
		\]
\end{itemize}
\end{defn}

We point out that all these concepts depend only on the relevant measure-classes $[\mu]$, $[\nu]$.
Let us record the following implications (see \cite{BF:icm} for more details).
For measure-class-preserving group actions one has
\begin{equation}
	\SAT \Longrightarrow \MErg \Longrightarrow \WM \Longrightarrow \Erg,
\end{equation}
and for equivariant quotient map
\begin{equation} \label{eq:rel}
	\rSAT \Longrightarrow \rMErg \Longrightarrow \rWM \Longrightarrow \rErg
\end{equation}
where $\SAT$, $\Erg$, $\MErg$, $\WM$ denote strongly almost transitivity, ergodicity, metric ergodicity, weak mixing; 
and $\rSAT$, $\rErg$, $\rMErg$, $\rWM$ the corresponding relative notions.
Let us record a direct proof of the following.
\begin{lemma}\label{P:SAT-erg}\hfill{}\\
	If measure-class preserving actions $G\acts (X,[\mu])$, $G\acts (Y,[\nu])$ are such that the projections 
	\[
		\pr_1:X\times Y\to X, \qquad \pr_2:X\times Y\to Y
	\]
	are relatively (SAT), then the diagonal $G$-action on $(X\times Y,[\mu\times\nu])$ is metrically ergodic. 
	In particular, $(X\times Y, [\mu\times\nu],G)$ is ergodic and weakly mixing.
\end{lemma}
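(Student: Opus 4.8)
The plan is to verify metric ergodicity of the diagonal $\Gamma$-action on $(X\times Y,[\mu\times\nu])$; once that is done, the remaining assertions follow from the implications $\MErg\Rightarrow\WM\Rightarrow\Erg$ recorded above. So I would fix a separable metric space $(S,d)$, a homomorphism $\pi\colon\Gamma\to\Isom(S,d)$, and an a.e.\ $\Gamma$-equivariant measurable map $F\colon X\times Y\to S$, and aim to show that $F$ is a.e.\ constant with value a point fixed by $\pi(\Gamma)$.

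The first step is to feed $F$ into relative metric ergodicity of the two coordinate projections, which is available since $\pr_1$ and $\pr_2$ are relatively $\SAT$ and $\rSAT\Rightarrow\rMErg$ by~\eqref{eq:rel}. Applying it to $\pr_1\colon X\times Y\to X$ with the constant measurable family of metric spaces $S_x\equiv S$ and the constant family of isometries $\pi_x(g)\equiv\pi(g)$ — for which the required cocycle identity $\pi_x(gh)=\pi_{hx}(g)\circ\pi_x(h)$ reduces to $\pi(gh)=\pi(g)\pi(h)$ — produces a $\Gamma$-equivariant measurable map $f_1\colon X\to S$ with $F(x,y)=f_1(x)$ for $(\mu\times\nu)$-a.e.\ $(x,y)$. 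The symmetric argument applied to $\pr_2\colon X\times Y\to Y$ yields a $\Gamma$-equivariant $f_2\colon Y\to S$ with $F(x,y)=f_2(y)$ for a.e.\ $(x,y)$.

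The second step is to play the two representations against each other: $f_1(x)=f_2(y)$ for $(\mu\times\nu)$-a.e.\ $(x,y)$, so by Fubini there is a $\nu$-conull set of $y$ for which $f_1(x)=f_2(y)$ for $\mu$-a.e.\ $x$; fixing one such $y_0$ forces $f_1$ to be $\mu$-a.e.\ equal to the constant $s_0:=f_2(y_0)$, hence $F\equiv s_0$. Finally, $\Gamma$-equivariance of $f_1$ gives, for every $g\in\Gamma$, that $\pi(g)s_0=\pi(g)f_1(x)=f_1(gx)=s_0$ for $\mu$-a.e.\ $x$, so $s_0$ is fixed by $\pi(\Gamma)$. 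This is exactly metric ergodicity of $(X\times Y,[\mu\times\nu],\Gamma)$, and the ``in particular'' clause follows.

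The point to watch — rather than a deep obstacle — is that no single hypothesis does all the work: relative metric ergodicity of $\pr_1$ alone only removes the dependence of $F$ on the $Y$-coordinate, and it is the symmetric use of $\pr_2$, producing the clash $f_1=f_2$, that collapses $F$ to a constant. One should also note that all these notions depend only on the measure classes $[\mu],[\nu]$, so the fact that the action is merely measure-class-preserving causes no difficulty; and a genuinely self-contained proof that bypasses the cited implication $\rSAT\Rightarrow\rMErg$ would be noticeably longer, because relative $\SAT$ of $\pr_1$ does not directly yield the analogous property for the ``doubled'' projection $X\times Y\times Y\to X$ that one would want in order to compare $F(x,y)$ with $F(x,y')$ — effectively one is forced through the fiberwise content of relative metric ergodicity.
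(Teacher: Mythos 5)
Your proof is correct, and the high-level skeleton (descend along $\pr_1$, descend symmetrically along $\pr_2$, clash the two descents via Fubini, then use equivariance to fix the constant) is exactly the skeleton of the paper's argument. The genuine difference is in how the descent $F(x,y)=f_1(x)$ is obtained: you invoke the implication $\rSAT\Rightarrow\rMErg$ recorded in~\eqref{eq:rel}, applied to the constant family $S_x\equiv S$, $\pi_x(g)\equiv\pi(g)$, whereas the paper proves the descent \emph{directly} from relative $\SAT$. Concretely, the paper replaces $d$ by $\min(d,1)$, forms the fiberwise double average $\phi(x)=\int_Y\int_Y d\bigl(F(x,y_1),F(x,y_2)\bigr)\,d\nu(y_1)\,d\nu(y_2)$, and shows $\phi=0$ a.e.\ by contradiction: if $E_0=\{\phi>r_0\}$ has positive measure, separability of $S$ gives a small ball $\Ball{s}{\rho}$ whose $F$-preimage in $E_0\times Y$ has positive measure, and applying $\rSAT$ of $\pr_1$ to that set produces a $g\in\Gamma$ concentrating $(1-\epsilon)$ of a fiber inside $F^{-1}(\Ball{\pi(g)s}{\rho})$, forcing $\phi<r_0$ on a positive-measure set of $E_0$ — a contradiction. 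This is a fully self-contained argument, which matters here since $\rSAT\Rightarrow\rMErg$ is attributed to the forthcoming \cite{BF:stronger}; your route is shorter but outsources the crucial step.

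One small correction to your closing remark: a self-contained proof does \emph{not} require $\rSAT$ for the doubled projection $X\times Y\times Y\to X$. The paper compares $F(x,y)$ with $F(x,y')$ indirectly — by covering $S$ with countably many $\rho$-balls and then applying $\rSAT$ only for the original projection $\pr_1$ to a ball preimage — so the obstruction you anticipated is sidestepped by a separability argument rather than by passing through relative metric ergodicity. Finally, for the ``in particular'' clause the paper also gives short direct arguments (taking $S=\{0,1\}$ for ergodicity, and $S$ the unit sphere of $L^2_0(\Omega,\omega)$ for weak mixing) rather than citing $\MErg\Rightarrow\WM\Rightarrow\Erg$; both are fine, but noting the explicit choices of $S$ is in the spirit of the paper's self-contained presentation.
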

\begin{proof}
	Let $(S,d)$ be a separable metric space, a continuous homomorphism $\pi:G\to\Isom(S,d)$ and $F:X\times Y\to S$ 
	a measurable $G$-equivariant map. 
	Using the assumption that $X\times Y\to X$ is $\rSAT$, we shall prove that $F$ descends to a measurable 
	$G$-equivariant map $F_1:X\to S$, i.e. almost everywhere $F(x,y)=F_1(x)$.
	Applying a similar argument to $X\times Y\to Y$, we will conclude that $F(x,y)$ is a.e. constant $s_0\in S$.
	Since $F$ is equivariant, $s_0$ must be $\pi(G)$-fixed. This will show that $X\times Y$ is $\MErg$.
	
	Let us now show that $F:X\times Y\to S$ descends to $X\to S$. 
	By replacing the metric $d$ by $\min(d,1)$, we may assume that $d\le 1$.
	Define a measurable function $\phi:X\to \bbR_+$ by 
	\[
		\phi(x):=\int_Y\int_Y d(F(x,y_1),F(x,y_2))\dd \nu(y_1)\dd \nu(y_2).
	\] 
	It suffices to show that $\mu$-a.e. $\phi(x)=0$. If not, there is $r>0$ so that the set
	\[
		E:=\setdef{x\in X}{\phi(x)>r}\qquad\textrm{has}\qquad \mu(E)>0.
	\] 
	Take $\epsilon,\rho= r/5$. 
	Since $S$ can be covered by countably many $\rho$-balls,
	it follows that there is $s\in S$ so that
	\[
		A:=\setdef{ (x,y) \in E\times Y }{F(x,y)\in \Ball{s}{\rho}}
		\qquad\textrm{has}\qquad \mu\times \nu(A)>0.
	\]
	For a set $C\subset X\times Y$ and $x\in X$ we denote $C_x:=\setdef{y\in Y}{(x,y)\in C}$.
	It follows from the assumption that $X\times Y\to X$ is $\rSAT$ that there is $g\in G$ 
	and subsets $A'\subset A$ and $E'\subset E$ of positive measure, 
	so that $gA'\subset E\times Y$ and $\nu(gA'_x)>1-\epsilon$ for all $x\in E'\subset E$.
	However, this is a contradiction, because 
	\[
		gA'\subset gA\subset F^{-1}\left(\pi(g)\Ball{s}{\rho}\right)=F^{-1}\left(\Ball{\pi(g)s}{\rho}\right)
	\]
	and so for $x\in E'$ one can estimate
	\[
		\phi(x)\le 2\rho \cdot 1 + 1\cdot 2\epsilon <r.
	\]
	This completes the proof that $G\acts (X\times Y,[\mu\times\nu])$ is $\MErg$.
	
	This property clearly implies ergodicity by considering two point metric space $S=\{0,1\}$ 
	with the trivial $G$-action.
	
	Let us show weak-mixing. Consider an ergodic p.m.p. action $G\acts (\Omega,\omega)$.
	Then the unitary $G$-representation $\pi$ on $L^2_0(\Omega,\omega)=L^2(\Omega,\omega)\ominus \bbC$
	has no non-zero invariant vectors. Take $S$ to be the unit sphere of $L^2_0(\Omega,\omega)$.
	Let $E\subset X\times Y\times \Omega$ be a $G$-invariant subset.
	For $(x,y)\in X\times Y$ denote $E_{x,y}:=\setdef{z\in\Omega}{(x,y,z)\in E}$
	and note that the measurable function $X\times Y\to [0,1]$, defined by  
	$(x,y)\mapsto \omega(E_{x,y})$, is $G$-invariant.
	By ergodicity there is a constant $c$ so that a.e. $\omega(E_{x,y})=c$.
	If $c\ne 0,1$ then $F(x,y)=(1_{E_{x,y}}-c)/\sqrt{c\cdot (1-c)}$ is an equivariant 
	map to $S$, which is impossible. 
	Hence $c=0$ or $c=1$, and so $E$ is null or conull in $X\times Y\times \Omega$.
	This completes the proof of the lemma.
\end{proof}

% subsection ergodicity (end)

\subsection{The space of ergodic components} % (fold)
\label{sub:ergcomp}\hfill{}\\
In this subsection we continue the discussion of the previous subsection, but this time we consider the case of non-ergodic actions.
We recall the notion of the space of ergodic components of an action.
We fix a locally compact second countable group $H$.
Given a measure class preserving system $(X,[\mu],H)$ we may consider the von-Neumann algebra $L^\infty(X,[\mu])$ on which $H$ acts by weak* continuous automorphisms. The space of invariants $L^\infty(X,[\mu])^H$ is a sub-von-Neumann algebra and it could be identified with the $L^\infty$ space associated with a quotient of $(X,[\mu])$ which we denote $(X/\!/ H,[\bar{\mu}])$ and call {\em the space of ergodic components of the system $(X,[\mu],H)$}.
We endow $(X/\!/ H,[\bar{\mu}])$ with the trivial $H$-action and regard the corresponding system $(X/\!/ H,[\bar{\mu}],H)$.
The injection
\[ L^\infty(X/\!/ H,[\bar{\mu}]) \simeq L^\infty(X,[\mu])^H\hookrightarrow L^\infty(X,[\mu]) \]
gives a natural quotient map $\pi_X:(X,[\mu]) \to (X/\!/ H,[\bar{\mu}])$ which is relatively ergodic.
The association of the space of ergodic components and the above quotient map to a system is functorial in the following sense:
given another system $(Y,[\nu],H)$ and a measure class preserving quotient map $p:X\to Y$, we get a natural measure class preserving quotient map
$\bar{p}:(X/\!/ H,[\bar{\mu}]) \to (Y/\!/ H,[\bar{\nu}])$ such that $\pi_Y\circ p=\bar{p}\circ\pi_X$.
In case $X$ and $Y$ are endowed with a commuting measurable action of a locally compact second countable group $G$,
this action descends to the spaces $(X/\!/ H,[\bar{\mu}])$ and $(Y/\!/ H,[\bar{\nu}])$ and the map $\bar{p}$ becomes a quotient map of the systems
$(X/\!/ H,[\bar{\mu}],G)$ and $(Y/\!/ H,[\bar{\nu}],G)$. 
This follows from Mackey's Point Realization Theorem, \cite{MackeyRealization}.
We note that ergodic properties of the map $p$ descends to $\bar{p}$. In particular, we state the following lemma.

\begin{lemma} \label{lem:ercomp}
We fix the group $G$ and $H$ as above and consider a measure class preserving map $p:X\to Y$ between 
measure class preserving systems $(X,[\mu],G\times H)$ and $(Y,[\nu],G\times H)$.
Assume $p$ is $G\times H$-metrically ergodic.
Then the corresponding quotient map between the spaces of $H$-ergodic components, $\bar{p}:(X/\!/ H,[\bar{\mu}]) \to (Y/\!/ H,[\bar{\nu}])$
is $G$-metrically ergodic.
\end{lemma}
 
\begin{proof}
Given a measurable family $\{(S_{\bar{y}},d_{\bar{y}})\}_{\bar{y}\in Y/\!/ H}$
		of separable metric spaces, with a measurable family 
		\[
			\left\{\pi_{\bar{y}}(g):(S_{\bar{y}},d_{\bar{y}})\overto{} (S_{g{\bar{y}}},d_{g{\bar{y}}})\right\}\qquad (g\in G,\ {\bar{y}}\in Y/\!/ H)
		\]
		of isometries with $\pi_{\bar{y}}(gh)=\pi_{h{\bar{y}}}(g)\circ \pi_{\bar{y}}(h)$,
and an a.e. defined $G$-equivariant measurable maps $\{ F(\bar{x})\in S_{\bar{p}(\bar{x})}\}_{\bar{x}\in Y /\!/ H}$
we obtain a corresponding family 
$\{(\tilde{S}_y,\tilde{d}_y)\}_{y\in Y}$ defined by $(\tilde{S}_y,\tilde{d}_y)=(S_{\bar{y}},d_{\bar{y}})$ where $\bar{y}=\pi_Y(y)$
and an a.e. defined measurable maps $\{ \tilde{F}(x)=F\circ \pi_X(x)\in \tilde{S}_{p(x)}=S_{\bar{p}\circ \pi_X(x)}\}_{x\in X}$, which is obviously $G\times H$-equivariant, in fact $H$-invariant.
By the metric ergodicity of $p$ we deduce that $\tilde{F}=\tilde{f}\circ p$
	for some measurable $G\times H$-equivariant family $\{ \tilde{f}(y)\in \tilde{S}_{y}\}_{y\in Y}$.
As $\tilde{f}$ is clearly $H$-invariant, it descends to a measurable $G$-equivariant family $\{ f(\bar{y})\in S_{\bar{y}}\}_{\bar{y}\in Y/\!/ H}$
and we conclude that indeed, $F=f\circ \bar{p}$.
\end{proof}

\subsection{The Mackey range of a cocycle} % (fold)
\label{subsec:mackeyrange}\hfill{}\\
In this subsection we continue the discussion began in the former two subsections. We let $G$ and $H$ be locally compact second countable groups.
We let $(X,[\mu],G)$ and $(Y,[\nu],G)$ be measure class preserving systems and $p:X\to Y$ be a $G$-equivariant measure class preserving quotient map.
We also fix  a measurable cocycle $c:G\times Y \to H$.

We use the cocycle $c$ to define a $G\times H$-action on $Y\times H$, where $H$ acts via the right action on the second coordinate and $G$ acts
using the formula
\[ g(y,h)=(gy,c(g,y)h). \]
We endow $Y\times H$ with the product measure class, taking the Haar class on $H$.
Similarly we define a $G\times H$-action on $X\times H$ using the pull-backed cocycle $p^*c:G\times X\to H$,
$p^c(g,x)=c(g,p(x))$.
The construction is functorial in the sense that the map 
\[ 
    p\times \id_H:X\times H\ \overto{}\ Y\times H 
\]
is $G\times H$-equivariant and measure class preserving.
We recall that the Mackey range of $c$ is the space of $G$-ergodic components of $Y\times H/\!/ G$,
endowed with the quotient measure class and the natural $H$-action.
We denote this space by $M(c)$.
Similarly, we obtain the Mackey range of $p^*c$, $M(p^*c)$.
This construction is functorial in sense that we get a natural $H$-map $\bar{p}:M(p^*c)\to M(c)$ making the following diagram commutative.

\begin{equation}\label{e:mackeyrange}
	\begin{tikzcd}
		 &  X\times H \arrow[dd, "p\times\id_H"] \arrow[dl] \arrow[dr] & \\
 M(\theta^*c) \arrow[dd,"\bar{p}"] & & X \arrow[dd,"p"] \\
& Y\times H \arrow[dl] \arrow[dr] & \\
M(c) & & Y
	\end{tikzcd}
\end{equation}

\begin{prop} \label{prop:mackeyrange}
% In Diagram~(\ref{e:mackeyrange}),
% \[ p \mbox{ is $G$-metrically ergodic} \quad \Longleftrightarrow \quad
% p\times\id_H \mbox{ is $G\times H$-metrically ergodic} \]
% \[ \Longrightarrow \quad
% \bar{p} \mbox{ is $H$-metrically ergodic}. \]
With notations as in Diagram~(\ref{e:mackeyrange}) conditions: 
\begin{enumerate}
    \item $p$ is $G$-metrically ergodic,
    \item $p\times\id_H$ is $G\times H$-metrically ergodic
\end{enumerate}
are equivalent, and imply that $\bar{p}$ is $H$-metrically ergodic.
\end{prop}

\begin{proof}
The fact that the $G\times H$-metrical ergodicity of $p\times\id_\bbR$ 
implies the $G$-metrical ergodicity of $p$
and the $H$-metrical ergodicity of $\bar{p}$ follows from Lemma~\ref{lem:ercomp} 
(observe that $X$ and $Y$ are the spaces of $H$-ergodic components of $X\times H$ 
and $Y\times H$ correspondingly;
$M(c)$ and $M(p^*c)$ are the spaces of $G$-ergodic components of $X\times H$ 
and $Y\times H$ correspondingly).

We are left to show that the $G$-metrical ergodicity of $p$ implies
the $G\times H$-metrical ergodicity of $p\times\id_H$.
We consider a measurable family $\{(S_{y,h},d_{y,h})\}_{(y,h)\in Y\times H}$
of (separable) metric spaces, with a measurable family 
\[
	\left\{\pi_{y,h}(g,h'):(S_{y,h},d_{y,h})\overto{}
	(S_{gy,c(g,y)hh'^{-1}},d_{gy,c(g,y)hh'^{-1}})\right\} 
\]
%\qquad ((g,h')\in G\times H,\ (y,h)\in Y\times H)		\]
with the required compatibility condition
and an a.e. defined $G\times H$-equivariant measurable maps 
$\{ F(x,h)\in S_{p(x),h}\}_{(x,h)\in X\times H}$.
For a.e $y\in Y$ we have that for a.e $h,h'\in H$, $S_{y,h}$ and $S_{y,h'}$ 
are isometric by $\pi_{h'^{-1}h}$.
For a generic $y\in Y$, we define $\bar{S}_y$ to be the space of $H$-equivariant sections 
$(y,h) \mapsto \phi(y,h)\in S_{y,h}$.
For two such sections $\phi_1,\phi_2$, we let $\bar{d}_y(\phi_1,\phi_2)$ to be the 
$H$-essentially constant value $d_{y,h}(\phi_1(h),\phi_2(h))$.
We note that for a.e $x\in X$, $\phi(h)=F(x,h)$ is an $H$-equivariant sections 
$(p(x),h) \mapsto \phi(p(x),h)\in S_{p(x),h}$,
thus defines an a.e. defined $G$-equivariant measurable maps $\{ \bar{F}(x)\in S_{p(x)}\}_{x\in X}$.
By the $G$-metric ergodicity of $p$ we deduce that there exists a measurable 
$G$-equivariant family $\{ \bar{f}(y)\in \bar{S}_{y}\}_{y\in Y}$
such that $\bar{F}=\bar{f}\circ p$.
Setting $f(y,h)$ to be the value at $(y,h)$ of the section $\bar{f}(y)$, we easily seen that
$F=f\circ p$.
\end{proof}

%%%%%%%%%%%

\section{Geometrical preliminaries}
\label{sec:Gpreliminaries}

\subsection{Gromov hyperbolic spaces} % (fold)
\label{sub:gromov_hyperbolic_spaces}\hfill{}\\
Let $(M,d_M)$ be a metric space.
The \emph{Gromov product} of $x,y\in M$ relative to $z\in M$ is defined by
\[
	\Gprod{x}{y}{z}:=\half\left(d_M(x,z)+d_M(y,z)-d_M(x,y)\right).
\]
Note that $\Gprod{x}{y}{z}\ge 0$.
A metric space $(M,d)$ is \textbf{Gromov hyperbolic} if there exists a constant $C$ so that
\begin{equation}\label{e:Ghyperbolic}
	\Gprod{x}{z}{o}\ge \min\left(\Gprod{x}{y}{o},\Gprod{y}{z}{o}\right)-C
\end{equation}
for all $x,y,z,o\in M$.
One consequence of this is that for some $D_0$ and any $D>D_0$ for any $x_1,x_2,x_3\in M$ the following set of \textbf{almost-medians}
\begin{equation} \label{eq:qmd}
	\QM_D(x_1,x_2,x_3):=\setdef{m\in M}{\Gprod{x_i}{x_j}{m}<D,\ \forall i\ne j\in\{1,2,3\}}
\end{equation}
is non-empty and has uniformly bounded diameter.

A sequence $(x_i)_{i\in \bbN}$ in $M$ is convergent at infinity if $\Gprod{x_i}{x_j}{o}\to+\infty$ as $i,j\to\infty$ for some
(hence any) $o\in M$.
The Gromov \textbf{boundary} $\partial M$ of $M$ is defined as the set of all equivalence classes $\xi=[(x_i)]$
of sequence $(x_i)$ in $M$ convergent at infinity, where $(x_i)\sim (x'_i)$
if $\Gprod{x_i}{x'_i}{o}\to+\infty$ as $i\to\infty$ for some (hence any) $o\in M$.
The fact that this is an equivalence relation and that the definition is independent of the choice of $o\in M$
follows from the hyperbolicity condition.
Denote
\[
	\overline{M}:=M\sqcup\partial M
\]
and, write $x_i\to \xi\in \overline{M}$ to express $\lim d(x_i,\xi)=0$ if $\xi\in M$,
and $[(x_i)]=\xi$ if $\xi\in\partial M$.
Extend the notion of Gromov product to $x,y\in\overline{M}$ and $o\in M$ by
\begin{equation}\label{e:BinGhyp}
	\Gprod{\xi}{\eta}{o}:=\inf_{x_i\to \xi,\, y_i\to \eta} \liminf_{i\to\infty} \Gprod{x_i}{y_i}{o}.
\end{equation}
Then $\Gprod{\xi}{\eta}{o}<+\infty$ iff $\xi\ne \eta$.
We denote by $\partial^2M$ the space of distinct ordered pairs at the boundary
\[
	\partial^2M:=\setdef{(\xi,\eta)\in (\partial M)^2}{\xi\ne\eta}.
\]
Assuming, as we will, that $M$ is a \textbf{proper} metric space, there is a topology
defined on the Gromov boundary $\partial M$ in which it is compact, and $\overline{M}=M\sqcup \partial M$
is a compactification of $M$ containing the latter as an open dense subset.

The topology on $\partial M$ can also be defined by a metric (or rather a family of H\"older equivalent metrics), 
directly related to the Gromov product, as follows.
There exists $\kappa_0>0$ so that for any $0<\kappa<\kappa_0$
there is a metric $\ang{-}{-}$ satisfying for every $\xi\neq\eta$,
\begin{equation}\label{e:vis-metric}
	\ang{\xi}{\eta}=e^{-\kappa\Gprod{\xi}{\eta}{o}+O(1)}.
\end{equation}
Hereafter $O(1)$ represents an implicit uniformly bounded quantity. 
We fix from now on such a $\kappa\in (0,\kappa_0)$ and the corresponding metric $\ang{-}{-}$ and denote by 
$\Ball{\xi}{r}=\setdef{\eta\in\bG}{\ang{\xi}{\eta}<r}$ the $r$-ball around $\xi$.

\medskip

Define the \textbf{upper Busemann} function $\beta^*:M\times M\times\partial M\to\bbR$ by
\begin{equation}\label{e:beta*}
	\begin{split}
		\beta^*(x,y;\zeta) &:=\limsup_{z\to\zeta}\ \left(d_M(x,z)-d_M(y,z)\right)\\
			&=\limsup_{z\to\zeta}\ \left(d_M(x,y)-2\Gprod{x}{z}{y}\right).
	\end{split}
\end{equation}
Replacing $\ \limsup\ $ by $\ \liminf\ $ we can define a similar 
lower Busemann function $\beta_*:M\times M\times\partial M\to\bbR$.
The hyperbolicity assumption implies that $\beta^*-\beta_*$ is uniformly bounded.
Hereafter we work with the upper Busemann function $\beta^*$ but, as everything will take place up to a bounded error,
this choice makes no essential difference.
Notice that the expression
\[
	\beta^*(x,z;\zeta)=\beta^*(x,y;\zeta)+\beta^*(y,z;\zeta)+O(1)
\]
holds for $x,y,z\in M$, $\zeta\in\partial M$. So $\beta^*(-.-;\zeta)$ is an \textbf{almost-cocycle}.
\begin{remark}
	Note that in many situations, including Examples~\ref{E:main}.(a),(c),(d), the lower and the upper Busemann function coincide 
	and this function, $\beta=\beta_*=\beta^*$, is actually a cocycle, i.e. it satisfies:
	\[
		\beta(x,y;\zeta)+\beta(y,z;\zeta)=\beta(x,z;\zeta)\qquad (x,y,z\in M,\ \zeta\in \partial M).
	\]
	In these situations one can also construct a second Busemann function $B:M\times\partial^2M\to \bbR_+$, denoted $B_x(\xi,\eta)$, 
	with the property 
	\begin{equation}\label{e:B-B=beta+beta}
		\beta(x,y;\xi)+\beta(x,y;\eta)=B_x(\xi,\eta)-B_y(\xi,\eta)\qquad (x,y\in M,\ \xi\ne \eta\in \partial M).
	\end{equation}
	However, the slight simplification of our arguments obtained in these special situation does not justify the restriction of
	generality of the discussion.
	Hence we proceed with the almost-cocycle $\beta^*$ and with the use of Gromov product $\Gprod{-}{-}{x}$ instead of $B_x(-,-)$,
	because one always has
	\begin{equation}\label{e:BBbb}
		\beta^*(x,y;\xi)+\beta^*(x,y;\eta)=\Gprod{\xi}{\eta}{x}-\Gprod{\xi}{\eta}{y}+O(1)
	\end{equation}
	for all $x,y\in M$ and $\xi\ne \eta\in \partial M$.
\end{remark}

\medskip

For a constant $C$ a $C$-\textbf{almost-geodesic} is  a map $p:I\to M$ from an interval $I\subset \bbR$
to $M$ satisfying
\[
	\left|d_M(p(t),p(s))-|t-s|\right|<C\qquad\qquad (t,s\in I).
\]
If $I=[a,b]$ is a finite interval, we say that $p$ is a $C$-\textbf{almost-geodesic segment} connecting $x=p(a)$ to $y=p(b)$ in $M$.
If $I=[a,\infty)$ we say that $p$ is a $C$-almost-geodesic \textbf{ray} connecting $p(a)\in M$ to $\xi=\lim_{t\to\infty} p(t)\in\partial M$;
similarly a $C$-almost-geodesic \textbf{line} $p:(-\infty,+\infty)\to M$ connects the
two points at the boundary $\xi=\lim_{t\to -\infty} p(t)$
and $\eta=\lim_{t\to \infty} p(t)$.
We say that $(M,d_M)$ is \textbf{quasi-convex} if there exists a constant $C$ so that
every two distinct points in $\overline{M}=M\sqcup\partial M$ can be connected by a $C$-almost-geodesic.

% subsection Gromov hyperbolic (end)

\subsection{A topological geodesic almost-flow} % (fold)
\label{sub:topological_almost_geodesic_flow}\hfill{}\\
In the setting of Example~\ref{E:main}.(a) we have the geodesic flow action of $\mathbb{R}$ on the unit tangent bundles $T^1N$
and $T^1\wt{N}$.
Our goal is to construct an analogue of this geodesic flow in the more general coarse-geometric 
setting of Setup~\ref{setup}. 
We will indeed construct a measurable version of this action in the next section;
the present section is devoted to a preliminary topological construction. 
Due to the fact that in coarse-geometric framework various metric properties are well behaved 
only up to an additive constant, our topological construction will only be an \textbf{almost-action} (or a \textbf{coarse action}).

\begin{remark}
In \cite{Mineyev} Mineyev constructs a topological version of the geodesic flow resolving various almost-actions.
We decided to avoid using this machinery, as our topological almost-geodesic flow is only an auxiliary tool 
needed for the measurable geodesic flow discussed below.
\end{remark}

\medskip

We now proceed with $\Gamma<\Isom(M,d_M)$ as in Setup~\ref{setup}. We denote the subadditive norm
\begin{equation}\label{e:norm}
	|g|:=d_M(go,o)\qquad (g\in\Gamma).
\end{equation}
Define $\sigma:\Gamma\times\bG\to\bbR$ by
\begin{equation}\label{e:sigma}
	\begin{split}
		\sigma(g,\xi):= & \beta^*(o,g^{-1}o;\xi) =\limsup_{x\to \xi}\ (d_M(o,x)-d_M(g^{-1}o,x)) \\
			=  & \limsup_{x\to \xi}\ \left(2\Gprod{g^{-1}o}{x}{o}-|g|\right)
	 		= 2\Gprod{g^{-1}o}{\xi}{o}-|g|.
	\end{split}
\end{equation}
Notice that $\sigma$ is an \textbf{almost-cocycle}, namely 
\[
	\sigma(gh,\xi)=\sigma(g,h.\xi)+\sigma(h,\xi)+O(1)
\]
for all $g,h\in\Gamma$ and $\xi\in\bG$. 

\medskip

The \emph{rough skeleton} for the almost geodesic-flow is the space $\partial^2 M\times\bbR$.
It is equipped with an action of $\Phi^\bbR$
\[
	\Phi^s(\xi,\eta,t)=(\xi,\eta,t+s), 
\]
the flip involution
\[
	(\xi,\eta,t)\ \mapsto\ (\eta,\xi,-t),
\]
and an \textbf{almost-action}, denoted by a star $*:\Gamma\times \partial^2 M\times\bbR\ \overto{}\ \partial^2 M\times\bbR$,  
defined by
\begin{equation}\label{e:almost-action}
	\nactn{g}{(\xi,\eta,t)}:=\left(g\xi,g\eta,t+\frac{\sigma(g,\eta)-\sigma(g,\xi)}{2}\right).
\end{equation}
Since $\sigma$ is an almost-cocycle, we have 
\[
	\nactn{g}{\left(\nactn{h}{(\xi,\eta,t)}\right)}=\Phi^{O(1)}\left(\nactn{gh}{(\xi,\eta,t)}\right).
\]
\begin{remark}
	This almost-action is actually an action if $\sigma$ is a cocycle.
	This is the case in the geometric example, where the geodesic flow $\Phi^\bbR$ 
	commutes with the flip and the $\Gamma$-action on $T^1\wt{N}$.
\end{remark}
Observe that in the geometric Example~\ref{E:main}.(a) one has a $\Gamma$-equivariant map
\[
	\partial^2 \wt{N}\times \bbR\ \overto{\cong}\ T^1\wt{N}\ \overto{}\ \wt{N},
\]
where $T^1\wt{N}\overto{}\wt{N}$ is the projection to the base point.
The following Proposition describes the properties of an analogous construction in the general coarse-geometric framework of Setup~\ref{setup}.
\begin{prop}\label{P:equi-map}\hfill{}\\
	There exist constants $C,D<\infty$ such that, upon choosing a base point $o\in M$, there is a map
	\[
		\pi:\ \partial^2 M\times \bbR\ \overto{}\  M
	\]
	satisfying:
	\begin{itemize}
		\item[{\rm (a)}] $\pi(\xi,\eta;-):\ \bbR\ \overto{}\ M$ is a $C$-almost-geodesic connecting $\xi$ to $\eta$.
		\item[{\rm (b)}] $\pi(\xi,\eta,0)\in \QM_D(\xi,\eta,o)$ (see Equation~(\ref{eq:qmd}) for the definition).
		\item[{\rm (c)}] $d\left(\pi\left(\nactn{g}{(\xi,\eta,t)}\right),\,g\pi(\xi,\eta,t)\right)<C$ for all $g\in\Gamma$.
	\end{itemize}
\end{prop}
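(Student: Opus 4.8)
The plan is to build $\pi$ pair-by-pair: for each $(\xi,\eta)\in\partial^2 M$ I would pick one $C_0$-almost-geodesic line from $\xi$ to $\eta$ and translate its parameter so that $t=0$ lies over a coarse median of $(\xi,\eta,o)$. Properties (a) and (b) are then built into the construction, and the only real content is the coarse-equivariance estimate (c), which I would extract from the Morse-type stability of almost-geodesics together with a short Busemann computation that recovers the parameter shift appearing in the almost-action $*$.

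For the construction, fix a quasi-convexity constant $C_0$ for $(M,d_M)$ and a $D_0$ large enough, as in \eqref{eq:qmd}, that every $\QM_{D_0}(x_1,x_2,x_3)$ is non-empty of uniformly bounded diameter. Given $(\xi,\eta)\in\partial^2 M$, quasi-convexity gives a $C_0$-almost-geodesic line $p\colon\bbR\to M$ with $p(t)\to\xi$ as $t\to-\infty$ and $p(t)\to\eta$ as $t\to+\infty$. Choose $m\in\QM_{D_0}(\xi,\eta,o)$; by the standard estimate $\dist(m,\im p)=\Gprod{-}{-}{} $-type bound $\dist(m,\im p)=\Gprod{\xi}{\eta}{m}+O(1)$ there is $t_m\in\bbR$ with $d_M(m,p(t_m))\le D_1$, $D_1=D_1(\delta,C_0)$. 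Set $\pi(\xi,\eta,t):=p(t+t_m)$, making one such selection for every pair (by the axiom of choice; a Borel selection exists if needed, but is not required for the statement). Then (a) holds with $C=C_0$; and since $\Gprod{-}{-}{z}$ is $1$-Lipschitz in $z$, $\pi(\xi,\eta,0)=p(t_m)\in\QM_{D_0+D_1}(\xi,\eta,o)$, which is (b) with $D:=D_0+D_1$.

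For (c), fix $g\in\Gamma$ and put $s:=\tfrac12(\sigma(g,\eta)-\sigma(g,\xi))$, so $\nactn{g}{(\xi,\eta,t)}=(g\xi,g\eta,t+s)$. Both $\ell(t):=\pi(g\xi,g\eta,t)$ and $\ell'(t):=g\,\pi(\xi,\eta,t)$ are $C_0$-almost-geodesic lines from $g\xi$ to $g\eta$, so stability of almost-geodesics in $\delta$-hyperbolic spaces gives $c_1=c_1(\delta,C_0)$ and $\tau\in\bbR$ with $d_M(\ell(t+\tau),\ell'(t))\le c_1$ for all $t$; it then suffices to prove $|\tau-s|=O(1)$ uniformly, since $d_M(\ell(t+s),\ell'(t))\le|s-\tau|+C_0+c_1$. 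Now $t=0$ gives $\ell(\tau)$ within $c_1$ of $\ell'(0)=g\,\pi(\xi,\eta,0)\in\QM_D(g\xi,g\eta,go)$, so $\ell(\tau)$ agrees up to $O(1)$ with the coarse projection $\ell(t_{go})$ of $go$ onto $\ell$, while $\ell(0)=\pi(g\xi,g\eta,0)\in\QM_D(g\xi,g\eta,o)$ is the coarse projection of $o$, i.e. $t_o=O(1)$. For any $w\in M$, evaluating Busemann functions along $\ell$ (with $g\xi$ at $-\infty$, $g\eta$ at $+\infty$) at the coarse projection $\ell(t_w)$ of $w$ yields
\[
	\beta^*(w,\ell(0);g\xi)=d_M(w,\ell(t_w))+t_w+O(1),\qquad \beta^*(w,\ell(0);g\eta)=d_M(w,\ell(t_w))-t_w+O(1),
\]
hence $t_w=\tfrac12\big(\beta^*(w,\ell(0);g\xi)-\beta^*(w,\ell(0);g\eta)\big)+O(1)$. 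Applying this with $w=o$ gives $\beta^*(o,\ell(0);g\xi)-\beta^*(o,\ell(0);g\eta)=O(1)$; applying it with $w=go$ and using the almost-cocycle property of $\beta^*$ then gives $\tau=\tfrac12\big(\beta^*(go,o;g\xi)-\beta^*(go,o;g\eta)\big)+O(1)$. Finally, $\Gamma$-invariance of $\beta^*$ gives $\beta^*(go,o;g\xi)=\beta^*(o,g^{-1}o;\xi)=-\sigma(g,\xi)+O(1)$, and likewise with $\eta$, so $\tau=\tfrac12(\sigma(g,\eta)-\sigma(g,\xi))+O(1)=s+O(1)$ with constant depending only on $\delta,C_0$. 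This gives (c).

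The geometric ingredients — almost-geodesics between boundary points (quasi-convexity), bounded diameter of coarse medians as in \S\ref{sub:gromov_hyperbolic_spaces}, and synchronous Morse-type fellow-travelling of almost-geodesics with common endpoints — I would use as black boxes. The delicate point is the last paragraph: reading the parameter along $\ell$ off the Busemann functions and matching it exactly with the shift $\tfrac12(\sigma(g,\eta)-\sigma(g,\xi))$ that is hard-wired into $*$, while keeping every additive error uniform in $g,\xi,\eta,t$. That bookkeeping is the crux; the rest is assembly.
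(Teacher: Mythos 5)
Your proof is correct and takes essentially the same route as the paper: you build $\pi$ by choosing one almost-geodesic per pair and centering its parametrization at a coarse median of $(\xi,\eta,o)$, and for (c) you compare the two fellow-travelling almost-geodesics $g\pi(\xi,\eta,-)$ and $\pi(g\xi,g\eta,-)$ and identify the parameter shift with $\tfrac12(\sigma(g,\eta)-\sigma(g,\xi))$ via a Busemann computation. The paper's version of (c) is more compressed (it works directly with the approximate projections $p\in\QM_D(\xi,\eta,o)$, $gp$, $q\in\QM_D(g\xi,g\eta,o)$ and writes $\tau_g+O(1)=\tfrac12(\beta^*(o,go;g\eta)+\beta^*(go,o;g\xi))$ in one line), but the underlying calculation and the use of $\Gamma$-equivariance of $\beta^*$ are the same as yours.
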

\begin{proof}
	Recall that our Setup~\ref{setup} assumes $M$ to be quasi-convex. 
	Fix $C$ large enough to guarantee an existence of $C$-almost geodesics between any two points of $\overline{M}$.
	Choose $D$ large enough to ensure that for any three points $\xi,\eta,\zeta\in \overline{M}$, 
	$\QM_D(\xi,\eta,\zeta)$ has non-empty intersection with any
	$C$-almost-geodesic connecting any two of the three points.
	
	For each $(\xi,\eta)\in\partial^2M$ choose a $C$-almost-geodesic $\pi(\xi,\eta,-):\bbR\to M$
	connecting $\xi$ to $\eta$, and adjust its parametrization to ensure $\pi(\xi,\eta,0)\in \QM_D(\xi,\eta,o)$.
	Thus properties (a), (b) are satisfied by construction. 
	
	To show (c), consider $g\in\Gamma$, $(\xi,\eta)\in\dbG$, $t\in\bbR$. Since $g$ is an isometry of $(M,d_M)$,
	both $g\pi(\xi,\eta,-)$ and $\pi(g\xi,g\eta,-)$ are $C$-almost geodesics connecting $g\xi$ to $g\eta$.
	Hence for some $\tau_g\in\bbR$
	\[
		d\left(g\pi(\xi,\eta,t),\pi(g\xi,g\eta,t+\tau_g)\right)=O(1).
	\]
	Choose $p\in\QM_D(\xi,\eta,o)$ to be an almost projection of $o$ to the almost geodesic line connecting $\xi$ to $\eta$.
	Then $gp\in\QM_D(g\xi,g\eta,go)=g\QM_D(\xi,\eta,o)$ because $g$ is an isometry of $M$.
	Similarly, choose $q\in \QM_D(g\xi,g\eta,o)$. 
	Note that $|\tau_g|=d(q,gp)+O(1)$ and the sign of $\tau_g$ is determined by the order of $q,gp$ on the $C$-almost geodesic $(\xi,\eta)$.
	Thus we have
	\begin{align*}
		\tau_g+O(1)&= \frac{\beta^*(o,go;g\eta)+\beta^*(go,o;g\xi)}{2}\\
		&= \frac{\beta^*(g^{-1}o,o;\eta)-\beta^*(g^{-1}o,o;\xi)}{2}\\
		&= \frac{\sigma(g,\eta)-\sigma(g,\xi)}{2}.
	\end{align*}
	This proves property (c).
\end{proof}

% subsection almost geodesic flow (end)

\subsection{A contraction lemma} % (fold)
\label{sub:a_geometric_lemma}
In the following Lemma we record a well-known geometric fact, describing the contraction dynamics on the boundary,
corresponding to the dynamics on the stable foliation in the hyperbolic dynamics of the geodesic flow.

\begin{lemma}\label{L:contraction}\hfill{}\\
	Given a compact subset $K\subset \dbG$ there exists $C=C(K)$ such that:
	if $g\in\Gamma$  and $\xi,\eta,\eta'\in\bG$ satisfy
	\[
		 (\xi,\eta),\ (\xi,\eta'),\ (g\xi,g\eta) \in K,\qquad \sigma(g,\xi)<0
	\]
	then, denoting $t=-\sigma(g,\xi)>0$, we have
	\[
		\sigma(g,\eta), \sigma(g,\eta')\in [t-C,t+C],\qquad
		d_\bG(g\eta,g\eta')<e^{-\kappa t +C}.
	\]
\end{lemma}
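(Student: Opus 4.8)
The statement has two parts: (i) controlling $\sigma(g,\eta)$ and $\sigma(g,\eta')$ to lie within bounded distance of $t=-\sigma(g,\xi)$; (ii) showing $d_{\partial\Gamma}(g\eta,g\eta')$ decays exponentially in $t$. Both are consequences of the relation (\ref{e:BBbb}) between $\sigma$ (i.e. the Busemann almost-cocycle) and the Gromov product, combined with the fact that membership in a fixed compact set $K\subset\partial^2\Gamma$ forces the Gromov products $\langle\xi\mid\eta\rangle_o$, $\langle\xi\mid\eta'\rangle_o$, $\langle g\xi\mid g\eta\rangle_o$ to be uniformly bounded.

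Let me sketch part (i). Using (\ref{e:BBbb}) twice: once for $\xi$ and once for $\eta$, and using the almost-cocycle identity $\sigma(g,\zeta)=\beta^*(g^{-1}o,o;\zeta)$, I can write, up to $O(1)$,
\[
\sigma(g,\xi)+\sigma(g,\eta)=\beta^*(g^{-1}o,o;\xi)+\beta^*(g^{-1}o,o;\eta)=\langle\xi\mid\eta\rangle_{o}-\langle\xi\mid\eta\rangle_{g^{-1}o}+O(1).
\]
Now $\langle\xi\mid\eta\rangle_{g^{-1}o}=\langle g\xi\mid g\eta\rangle_{o}$ since $g$ is an isometry, and both $\langle\xi\mid\eta\rangle_o$ and $\langle g\xi\mid g\eta\rangle_o$ are bounded by a constant depending only on $K$ (the Gromov product is a continuous, hence bounded, function on the compact set $K$). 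Therefore $\sigma(g,\xi)+\sigma(g,\eta)=O(1)$, i.e. $\sigma(g,\eta)=-\sigma(g,\xi)+O(1)=t+O(1)$. The identical computation with $\eta'$ in place of $\eta$ (using $(\xi,\eta')\in K$, and noting $(g\xi,g\eta')$ need not be in $K$ — so here I must be a little more careful and instead bound $\langle g\xi\mid g\eta'\rangle_o=\langle\xi\mid\eta'\rangle_{g^{-1}o}$; this is $\le\langle\xi\mid\eta'\rangle_o+d(o,g^{-1}o)$ trivially but that is not bounded, so I actually need to argue it stays bounded using that $g\xi$ is moving and $g\eta'$ near... ) — this is the point requiring attention, see below. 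Modulo that, $\sigma(g,\eta),\sigma(g,\eta')\in[t-C,t+C]$.

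For part (ii): by the formula for the metric on the boundary, $d_{\partial\Gamma}(g\eta,g\eta')\asymp \alpha^{\langle g\eta\mid g\eta'\rangle_o}$ up to bounded multiplicative error, so it suffices to show $\langle g\eta\mid g\eta'\rangle_o\ge t-C$. Using the isometry $g$, $\langle g\eta\mid g\eta'\rangle_o=\langle\eta\mid\eta'\rangle_{g^{-1}o}$. I would estimate this via the $\delta$-hyperbolic inequality applied along an almost-geodesic configuration: the almost-median of $\xi,\eta,\eta'$ (these Gromov products $\langle\xi\mid\eta\rangle_o,\langle\xi\mid\eta'\rangle_o,\langle\eta\mid\eta'\rangle_o$ and the tripod picture) together with the observation that $g^{-1}o$ is far from $\xi$ in the direction quantified by $\sigma(g,\xi)<0$, i.e. $\beta^*(g^{-1}o,o;\xi)=-t$. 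Concretely, $\langle\eta\mid\eta'\rangle_{g^{-1}o}\ge\min(\langle\xi\mid\eta\rangle_{g^{-1}o},\langle\xi\mid\eta'\rangle_{g^{-1}o})-C$, and $\langle\xi\mid\eta\rangle_{g^{-1}o}=\langle g\xi\mid g\eta\rangle_o$ which, since $(g\xi,g\eta)\in K$, is... bounded — that gives the wrong direction. So instead I should use $\langle\xi\mid\eta\rangle_{g^{-1}o}=\tfrac12(d(g^{-1}o,\xi)+d(g^{-1}o,\eta)-\dots)$ heuristically, and the correct bookkeeping is: $\langle\eta\mid\eta'\rangle_{g^{-1}o}+\langle\xi\mid\eta\rangle_{g^{-1}o}$ relates to $\langle\xi\mid\eta'\rangle_{g^{-1}o}$ plus a Busemann term, and since $\langle\xi\mid\eta\rangle_{g^{-1}o}$, $\langle\xi\mid\eta'\rangle_{g^{-1}o}$ are both $O(1)$ (images in $K$ resp. bounded by the argument in part (i)), while the "height" of $g^{-1}o$ above $\xi$ is $t$, I get $\langle\eta\mid\eta'\rangle_{g^{-1}o}\ge t-C$.

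The main obstacle, as flagged, is handling the pair $(\xi,\eta')$: only $(\xi,\eta')\in K$ is assumed, not $(g\xi,g\eta')\in K$, so I cannot directly quote boundedness of $\langle g\xi\mid g\eta'\rangle_o$. The resolution is that $(g\xi,g\eta)\in K$ pins down where $g\xi$ goes, and then the exponential contraction of part (ii) itself shows $g\eta'$ is forced near $g\eta$, so a posteriori $(g\xi,g\eta')$ lies in a bounded neighborhood of $K$; the cleanest route is to prove the $\langle\eta\mid\eta'\rangle_{g^{-1}o}\ge t-C$ estimate first (from the tripod geometry of $\xi,\eta,\eta'$ viewed from $g^{-1}o$, using $\sigma(g,\xi)=-t$ and $(\xi,\eta),(\xi,\eta')\in K$), and only then deduce the statements about $\sigma(g,\eta')$. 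I would organize the proof so that the $\delta$-hyperbolic inequality (\ref{e:Ghyperbolic}) for the four points $g^{-1}o, \xi,\eta,\eta'$, together with the identity $\beta^*(g^{-1}o,o;\xi)=\sigma(g,\xi)$ rewritten via (\ref{e:BBbb}) as $\langle\xi\mid\cdot\rangle_o-\langle\xi\mid\cdot\rangle_{g^{-1}o}+O(1)$, does all the work, absorbing every bounded error into the single constant $C=C(K,\delta,\alpha)$.
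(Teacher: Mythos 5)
Your part (i) argument for $\eta$ is sound: the identity $\sigma(g,\xi)+\sigma(g,\eta)=2\bigl(\Gprod{\xi}{\eta}{g^{-1}o}-\Gprod{\xi}{\eta}{o}\bigr)+O(\delta)$ (the correct form of~(\ref{e:BBbb}), with factor $2$ and sign reversed from what is printed), combined with $\Gprod{\xi}{\eta}{g^{-1}o}=\Gprod{g\xi}{g\eta}{o}$ and compactness of $K$, does give $\sigma(g,\eta)=t+O(1)$. And you correctly flag the real obstacle: nothing a priori bounds $\Gprod{\xi}{\eta'}{g^{-1}o}=\Gprod{g\xi}{g\eta'}{o}$, since $(g\xi,g\eta')$ is not assumed to lie in $K$.

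But the proposed resolution is circular. At the decisive point you write that both $\Gprod{\xi}{\eta}{g^{-1}o}$ and $\Gprod{\xi}{\eta'}{g^{-1}o}$ are ``$O(1)$, images in $K$ resp.\ bounded by the argument in part (i).'' The argument in part (i) establishes no such bound on $\Gprod{\xi}{\eta'}{g^{-1}o}$ --- that is precisely the missing quantity. Nor does the bare four-point inequality~(\ref{e:Ghyperbolic}) close the gap: with base $g^{-1}o$ it gives only $\min\bigl(\Gprod{\xi}{\eta'}{g^{-1}o},\Gprod{\eta}{\eta'}{g^{-1}o}\bigr)\le \Gprod{\xi}{\eta}{g^{-1}o}+C=O(1)$, which identifies a dichotomy but does not tell you which branch holds; ruling out the branch $\Gprod{\eta}{\eta'}{g^{-1}o}=O(1)$ for large $t$ already requires the contraction estimate, so it cannot be fed back in as input. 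A non-circular route exists --- for instance one first notes that nonnegativity of Gromov products of boundary points combined with the corrected~(\ref{e:BBbb}) for the pairs $(\xi,\eta')$ and $(\eta,\eta')$ gives the \emph{lower} bounds $\sigma(g,\eta')\ge t-O(1)$ and $\Gprod{\eta}{\eta'}{g^{-1}o}\ge t-O(1)$ outright, and only then uses the dichotomy together with $d(o,g^{-1}o)=t+O(1)$ to pin down $\Gprod{\xi}{\eta'}{g^{-1}o}=O(1)$ and hence the upper bound $\sigma(g,\eta')\le t+O(1)$ --- but you did not carry this out, and the sketch as written substitutes the desired conclusion for a reason.

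The paper avoids the issue entirely by working with explicit almost-median (quasi-projection) points $p\in\QM_D(\xi,o,\eta)$, $p'\in\QM_D(\xi,o,\eta')$, $q\in\QM_D(g\xi,o,g\eta)$, whose $d(o,\cdot)$ are all bounded by $R(K)$; then $g^{-1}q$ lies within $R$ of $g^{-1}o$ on the $(\xi,\eta)$ line, $d(p,p')<2R$, and the rays from $g^{-1}q$ (equivalently $g^{-1}o$) to $\eta$ and to $\eta'$ manifestly share a common initial segment of length $t-O(R)$. This delivers $\Gprod{\eta}{\eta'}{g^{-1}o}\ge t-O(R)$ directly, and both estimates on $\sigma(g,\eta),\sigma(g,\eta')$ then read off from the positions of these projection points --- no need to control $\Gprod{\xi}{\eta'}{g^{-1}o}$ in advance. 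I would encourage you to adopt this explicit picture; the Gromov-product bookkeeping can be made to work but, as your own hedging shows, it is delicate on precisely the point where the lemma has content.
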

\begin{proof}
	Choose 
	\[
		p\in\QM_D(\xi,o,\eta),\quad p'\in \QM_D(\xi,o,\eta'),\quad 
		q\in \QM_D(g\xi,o,g\eta).
	\]
	These points should be thought of as approximate nearest point projections
	of the base point $o\in M$ to the almost-geodesics lines $(\xi,\eta)$, $(\xi,\eta')$, $(g\xi,g\eta)$.
	Let us also choose a ``nearly a projection" $q'$ of $o$ to $(g\xi,g\eta')$,
	namely $q'\in \QM_D(g\xi,o,g\eta')$.
	
	Compactness of $K$ means that $d(o,p)$, $d(o,p')$, $d(o,q)$ are bounded by some $R=R(K)$, and so 
	also   
	\[
		d(p,p')\le d(o,p)+d(o,p')<2R.
	\]
	Since $g$ is an isometry,  $g^{-1}q\in \QM_D(\xi,g^{-1}o,\eta)$;
	so is "nearly a projection" of $g^{-1}o$ to $(\xi,\eta)$.
	As $d(g^{-1}o,g^{-1}q)=d(o,q)<R$, we have 
	\[
		\sigma(g,\xi)=-d(p,q)+O(R),\qquad \sigma(g,\eta)=d(p,q)+O(R).
	\]
	So the almost-geodesic rays $[q,\eta)$ and $[q,\eta')$ have a common initial segment $[q,p]\approx[q,p']$
	of length $t-O(R)$.
	The same applies to almost-geodesic rays $[g^{-1}o,\eta)$ and $[g^{-1}o,\eta')$.
	Thus
	\[
		\Gprod{g\eta}{g\eta'}{o}=\Gprod{\eta}{\eta'}{g^{-1}o}\ge t-O(R)
	\]
	which gives the estimate in the lemma by (\ref{e:vis-metric}).
\end{proof}

% subsection a_geometric_lemma (end)

% section preliminaries (end)

\section{Measure-theoretic constructions} % (fold)
\label{sec:measurable_constructions}

\subsection{Patterson--Sullivan measures} % (fold)
\label{sub:measures}\hfill{}\\
We keep the assumption that $\Gamma<\Isom(M,d)$ acts properly cocompactly on a
quasi-convex Gromov hyperbolic space $(M,d)$ as in the Setup~\ref{setup}.
We also recall choosing $\kappa\in (0,\kappa_0)$ and constructing 
the corresponding metric $\ang{\cdot}{\cdot}$ as in (\ref{e:vis-metric}).

\begin{thm}[\cite{BHM1}*{Theorem~2.3} after \cite{Coo}]\label{T:PS-main}\hfill{}\\
	There exists a probability measure $\nu$ on $\bG$ such that 
    \begin{equation} \label{eq:PS}
		\frac{\dd g^{-1}_*\nu}{\dd\nu}(\xi)=e^{\delta_\Gamma\cdot\sigma(g,\xi)+O(1)}   
    \end{equation}
	Any two such measures are equivalent and have bounded
	Radon--Nikodym derivatives. 
	Furthermore, any such measure $\nu$ is $(\delta_\Gamma/\log\alpha)$-Ahlfors regular, 
	that is for every $\xi\in\bG$ and $r>0$,
	\[ 
		\nu(\Ball{\xi}{r})=r^{\delta_\Gamma/\kappa+O(1)}. 
	\]
\end{thm}
Unpacking the definition of the metric $\ang{\cdot}{\cdot}$  
the last equation is equivalent to 
\begin{equation}\label{e:shadow-estimate}
    \nu\setdef{\eta}{\Gprod{\xi}{\eta}{o}>t}=e^{-\delta_\Gamma t+O(1)}. 
\end{equation}
The original works of Patterson and Sullivan for $\Gamma<\Isom(\mathbf{H}^n)$
was extended to strictly negative curvature (cf. Yue \cite{Yue}), 
and to word metrics on general Gromov-hyperbolic groups
by Coorneart \cite{Coo}.
These methods apply to our more general setting \ref{setup}, see \cite{BHM1}.

Measures satisfying the condition \eqref{eq:PS} of Theorem~\ref{T:PS-main} are called PS-measures, after Patterson and Sullivan,
and we will denote  by $[\PSnu]$ the common measure class.
From now on we fix a PS-measure $\nu$ from this class.

We shall need the fact that the PS-measure $\nu$ is Ahlfors regular (see \cite{BHM1}). 
As a consequence it satisfies the following version of Lebesgue differentiation:
\begin{thm}[Lebesgue Differentiation]\label{T:Lebesgue}\hfill{}\\
	Given $f\in L^1(\bG,\nu)$ for $\nu$-a.e. $\xi\in\bG$:
	\[	
		\begin{split}
		0=&\lim_{r \to0}\ \frac{1}{\nu(\Ball{\xi}{r})}\int_{\Ball{\xi}{r}} 
			|f(\zeta)-f(\xi)|\dd \nu(\zeta)\\
		=&	\lim_{t\to\infty}\ e^{\delta_\Gamma t}\cdot \int_{\setdef{\eta}{\Gprod{\zeta}{\xi}{o}>t}} |f(\eta)-f(\xi)|\dd \nu(\eta).		
		\end{split}
	\]
\end{thm}

% subsection Patterson-Sullivan (end)

\subsection{The Bowen-Margulis-Sullivan measure} % (fold)
\label{sub:abstract_geodesic_flow}\hfill{}\\
Let us now establish some further properties. We start from an analogue of Sullivan's result.

\begin{proposition}\label{P:invariantBMS}\hfill{}\\
	There exists a $\Gamma$-invariant Radon measure, denoted $\BMSm$, in the measure class $[\PSnu\times \PSnu]$
	on $\partial^2 M=\partial^2\Gamma$. Moreover, $\BMSm$ has the form
	\[
		\dd\BMSm(\xi,\eta)=e^{F(\xi,\eta)}\dd \nu(\xi)\dd \nu(\eta)
	\]
	where $F$ is a measurable function on $(\dbG,[\PSnu\times\PSnu])$ satisfying 	
	\[
		F(\xi,\eta)=\delta_\Gamma\cdot\Gprod{\xi}{\eta}{o}+O(1).
	\]
\end{proposition}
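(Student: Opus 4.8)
The plan is to define $\BMSm$ directly by the formula
\[
	d\BMSm(\xi,\eta):=e^{\delta_\Gamma\cdot\Gprod{\xi}{\eta}{o}}\,d\nu(\xi)\,d\nu(\eta)
\]
for a fixed choice of PS-measure $\nu\in[\PSnu]$ and base point $o\in M$, and then to verify the three claims: that this is a well-defined Radon measure, that it lies in the class $[\PSnu\times\PSnu]$, and that it is $\Gamma$-invariant. The first two are essentially immediate: the weight $e^{\delta_\Gamma\Gprod{\xi}{\eta}{o}}$ is a positive measurable function on $\dbG$ that is finite $\nu\times\nu$-a.e. (indeed finite away from the diagonal, which is $\nu\times\nu$-null since $\nu$ is non-atomic), so $\BMSm$ is absolutely continuous with respect to $\nu\times\nu$ with everywhere-positive density, hence in $[\PSnu\times\PSnu]$; and it is Radon because on any compact $K\subset\dbG$ the Gromov product $\Gprod{\xi}{\eta}{o}$ is bounded, so the density is bounded on $K$ and $\BMSm(K)<\infty$. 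The function $F(\xi,\eta)=\delta_\Gamma\Gprod{\xi}{\eta}{o}$ is then literally of the claimed form (with $O(1)=0$ for this representative); for a general Radon measure in the class with the stated invariance, the $O(1)$ absorbs the bounded Radon-Nikodym derivative between it and this explicit one.

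The real content is $\Gamma$-invariance. Here I would compute $g_*\BMSm$ against $g_*(\nu\times\nu)$ using the two transformation rules from Proposition~\ref{P:PS-main}. By the chain rule,
\[
	\frac{dg_*\BMSm}{d\nu\times\nu}(\xi,\eta)
	=e^{\delta_\Gamma\Gprod{g^{-1}\xi}{g^{-1}\eta}{o}}\cdot\frac{dg_*\nu}{d\nu}(\xi)\cdot\frac{dg_*\nu}{d\nu}(\eta)
	=e^{\delta_\Gamma\left(\Gprod{g^{-1}\xi}{g^{-1}\eta}{o}-\sigma(g^{-1},\xi)-\sigma(g^{-1},\eta)\right)+O(1)}.
\]
Now I invoke the cocycle-type identity~\eqref{e:BBbb}, namely
\[
	\beta^*(x,y;\zeta)+\beta^*(x,y;\eta)=\Gprod{\zeta}{\eta}{y}-\Gprod{\zeta}{\eta}{x}+O(1),
\]
applied with the right choice of points so that $\sigma(g^{-1},\xi)+\sigma(g^{-1},\eta)=\beta^*(go,o;\xi)+\beta^*(go,o;\eta)$ equals $\Gprod{\xi}{\eta}{o}-\Gprod{\xi}{\eta}{go}+O(1)$; combined with $\Gprod{g^{-1}\xi}{g^{-1}\eta}{o}=\Gprod{\xi}{\eta}{go}$ (which holds exactly, since $g$ is an isometry of $M$), the exponent collapses to $\delta_\Gamma\cdot\Gprod{\xi}{\eta}{o}+O(1)$, i.e.
\[
	\frac{dg_*\BMSm}{d\nu\times\nu}(\xi,\eta)=e^{\delta_\Gamma\Gprod{\xi}{\eta}{o}+O(1)}=e^{O(1)}\cdot\frac{d\BMSm}{d\nu\times\nu}(\xi,\eta).
\]
This shows $g_*\BMSm$ is equivalent to $\BMSm$ with Radon-Nikodym derivative bounded above and below uniformly in $g$ — but \emph{not} that it is literally invariant, because of the unavoidable $O(1)$. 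To upgrade "quasi-invariant with uniformly bounded cocycle" to "genuinely invariant" I would run an averaging/compactness argument: the Radon-Nikodym cocycle $(g,(\xi,\eta))\mapsto dg_*\BMSm/d\BMSm$ is a coboundary up to a bounded multiplicative error, so one can either average over a Følner-type exhaustion, or — more cleanly — note that the function $\Phi(\xi,\eta):=\Gprod{\xi}{\eta}{o}$ differs from a genuine $\Gamma$-quasi-cocycle by a bounded amount and appeal to a bounded-cohomology-vanishing/straightening statement to replace $\Phi$ by an exact version $\tilde\Phi$ with $\tilde\Phi(g\xi,g\eta)-\tilde\Phi(\xi,\eta)=\sigma(g^{-1},\xi)+\sigma(g^{-1},\eta)$ on the nose; then $e^{\delta_\Gamma\tilde\Phi}d\nu\times d\nu$ is exactly invariant.

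The main obstacle is precisely this last step — removing the $O(1)$. Getting an honestly $\Gamma$-invariant measure rather than merely a quasi-invariant one in the right class requires either constructing a true Busemann-type function $B_x(\xi,\eta)$ (available only in the special settings noted in the Remark, e.g. Examples~\ref{E:main}.(a),(c),(d)) or a soft argument — a Banach-limit/amenability average over $\Gamma$ acting on the (weak-$*$ compact) set of Radon measures equivalent to $\nu\times\nu$ with density pinched between two fixed constants of $e^{\delta_\Gamma\Gprod{\xi}{\eta}{o}}$, whose image is convex and $\Gamma$-invariant, yielding a fixed point. I expect the authors to do something along these lines, and I would organize the write-up so that the explicit formula and the quasi-invariance estimate are the easy bulk, with a final paragraph handling the straightening to exact invariance.
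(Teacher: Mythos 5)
Your quasi-invariance computation is correct and matches the paper's first step: starting from $m_o$ with $dm_o=e^{\delta_\Gamma\Gprod{\xi}{\eta}{o}}d\nu\,d\nu$ and combining Proposition~\ref{P:PS-main} with the identity~\eqref{e:BBbb} to show that the logarithmic Radon--Nikodym cocycle of the $\Gamma$-action is uniformly bounded (in $g$ and in $(\xi,\eta)$). You also correctly identify that this only gives quasi-invariance, and that the whole content of the Proposition is upgrading it to honest invariance.

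The mechanism you propose for the upgrade does not work. A Banach-limit / amenability average or a Markov--Kakutani-type fixed-point argument on the weak-$*$ compact convex set of measures pinched between $e^{-C}m_o$ and $e^Cm_o$ requires an invariant mean on $\Gamma$, and $\Gamma$ is a non-elementary hyperbolic group, hence non-amenable: there is no invariant mean and no F\o lner exhaustion, and non-amenable groups can act on compact convex sets without fixed points. The ``bounded cohomology vanishing'' framing is also off target: for hyperbolic groups $H^2_b(\Gamma;\bbR)$ is infinite-dimensional, and more to the point what is needed here is not a statement about quasi-morphisms on the group but about real-valued cocycles over the $\Gamma$-action on $(\dbG,\nu\times\nu)$.

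What the paper actually uses is Lemma~\ref{L:bddcoh}, a purely elementary fact with no amenability input: if $\rho:\Gamma\times X\to\bbR$ is a genuine Borel cocycle (which $\log\frac{dg^{-1}_*m_o}{dm_o}$ is, being a Radon--Nikodym cocycle) that is pointwise bounded in $\Gamma$, then it is a coboundary, $\rho(g,x)=\phi(gx)-\phi(x)$, with the explicit primitive $\phi(x)=-\sup_{h\in\Gamma}\rho(h,x)$. The cocycle identity $\rho(hg,x)=\rho(h,gx)+\rho(g,x)$ gives
\[
	\sup_{h'}\rho(h',x)=\sup_{h}\rho(h,gx)+\rho(g,x),
\]
which rearranges to the coboundary equation. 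Setting $F=\delta_\Gamma\Gprod{\xi}{\eta}{o}+\phi$ then produces the invariant $\BMSm$, and the $O(1)$ in the statement is exactly the bounded correction $\phi$, not zero. So the idea you want --- ``straighten a uniformly bounded cocycle to zero'' --- is the right instinct, but the correct tool is this direct supremum trick over the group, not averaging; you should replace the last paragraph of your proposal with an appeal to (or a proof of) this coboundary lemma.
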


\begin{proof}
	Consider the Radon measure $m_o$ on $\partial^2 M$ defined by
	\[
		dm_o(\xi,\eta)=e^{\delta_\Gamma\cdot \Gprod{\xi}{\eta}{o}}\,\dd\nu(\xi)\,\dd \nu(\eta).
	\]
	It is $\Gamma$-quasi-invariant, and for $g\in\Gamma$ the log of the Radon-Nikodym derivative satisfies
	\[
		\begin{split}
		\delta_\Gamma^{-1}&\cdot\log\frac{\dd g^{-1}_* m_o}{\dd m_o}(\xi,\eta)\\
		&=\Gprod{\xi}{\eta}{g^{-1}o}-\Gprod{\xi}{\eta}{o}
		   +\delta_\Gamma^{-1}\cdot\log\frac{\dd g^{-1}_*\nu}{\dd\nu}(\xi)
		   +\delta_\Gamma^{-1}\cdot\log\frac{\dd g^{-1}_*\nu}{\dd\nu}(\eta)\\
		&=\Gprod{g\xi}{g\eta}{o}-\Gprod{\xi}{\eta}{o}+\sigma(g,\xi)+\sigma(g,\eta)+O(1)
		\end{split}
	\]
	In view of (\ref{e:BBbb}) the latter is uniformly bounded over $g\in\Gamma$ and $(\xi,\eta)\in\partial^2M$.
	We can now invoke the following general fact.
	
	\begin{lemma}\label{L:bddcoh}\hfil{}\\
		Let $\Gamma\acts Y$ be a measurable action on a Borel space, and $c:\Gamma\times Y\to\bbR$ be a Borel cocycle.
		Assume that $c$ is pointwise bounded in $\Gamma$, i.e. 
		\[
			|c(g,y)|\le C(y)<+\infty
		\]
		Then the cocycle $c$ is a coboundary, namely: $c(g,y)=\phi(gy)-\phi(y)$ for
		some Borel function $\phi:Y\to\bbR$ satisfying $|\phi(y)|\le 2 C(y)$. 
		
		In particular, a uniformly bounded Borel cocycle, $c(g,y)= O(1)$, 
		is a coboundary of a bounded Borel function.
	\end{lemma}
	\begin{proof}\hfill{}\\
		Applying $\ -\sup_h\ $ to the cocycle equation $c(g,y)=c(hg,y)-c(h,gy)$, the function
		\[
			\phi(y)=-\sup\setdef{ c(h,y) }{ h\in \Gamma }
		\]
		gives the a.e. identity $c(g,y)=\phi(gy)-\phi(y)$.
	\end{proof}
	Finally we set
	\begin{equation} \label{eq:F}
		F(\xi,\eta):=\delta_\Gamma\cdot\Gprod{\xi}{\eta}{o}+\phi(\xi,\eta)
	\end{equation}
	where the function $\phi\in L^\infty(\partial^2 M,\nu\times\nu)$ is obtained from Lemma~\ref{L:bddcoh}
	applied to the logarithmic Radon--Nikodym cocycle
	\[
		\log\frac{\dd g^{-1}_* m_o}{\dd m_o}(\xi,\eta)
	\]
	over the $\Gamma$-action on $(\partial^2 M, [\PSnu\times\PSnu])$.
	This completes the proof of Proposition~\ref{P:invariantBMS}.
\end{proof}

\begin{remark} \label{rem:BMSmcanonical}
It will follow from the ergodicity of the $\Gamma$ action on $(\partial^2\Gamma,[\PSnu\times \PSnu])$,
i.e Theorem~\ref{T:double-erg}, that a measure $\BMSm$ satisfying the conditions of Proposition~\ref{P:invariantBMS}
is unique up to a multiplicative constant.
A choice of a normalization will be provided in Proposition~\ref{P:mgf}(e) below.
\end{remark}

At this point we make a choice of a measure $\BMSm$ and a measurable function $F$ 
on $\dbG$ satisfying the conditions of Proposition~\ref{P:invariantBMS}.
In view of Remark~\ref{rem:BMSmcanonical}, these choices will be shown to be canonical later on.
The measure $\BMSm$ is called the \textbf{Bowen-Margulis-Sullivan measure}.

\subsection{Some cocycle identities} % (fold)
\label{sub:cocycles}\hfill{}\\
Now let us define a measurable function $\rho:\Gamma\times \partial M\to \bbR$ by
\begin{equation}\label{e:def-rho}
	\rho(g,\xi):=\delta_\Gamma^{-1}\cdot \log\frac{\dd g^{-1}_*\nu}{\dd\nu}(\xi)
\end{equation}
and observe the following properties.
\begin{itemize}
	\item 
	$\rho$ is a measurable cocycle: for $[\PSnu]$-a.e. $\xi\in \bG$
	\[
		\rho(gh,\xi)=\rho(g,h\xi)+\rho(h,\xi)\qquad (g,h\in\Gamma).
	\]
	\item 
	$\rho$ is of bounded distance from $\sigma$:
	\[
		\rho(g,\xi)=\sigma(g,\xi)+O(1)
	\]
	\item 
	The square of $\rho$ is a coboundary of a measurable $F$:
	\[
		\rho(g,\xi)+\rho(g,\eta)=\nabla_g F(\xi,\eta)
	\]
	where $\nabla_g F:=F\circ g-F$.
\end{itemize}
The last identity gives the formula
\begin{equation}\label{e:rho-and-F}
	\begin{split}
		\tau(g,(\xi,\eta))&:=\frac{\rho(g,\eta)-\rho(g,\xi)}{2}\\
		&=\rho(g,\eta)-\frac{1}{2}\nabla_g F(\xi,\eta)\\
		&=-\rho(g,\xi)+\frac{1}{2}\nabla_g F(\xi,\eta).
	\end{split}
\end{equation}
This is a measurable \textbf{cocycle} $\Gamma\times\dbG\to\bbR$, i.e. we have an a.e. identity
\[
	\tau(gh,(\xi,\eta))=\tau(g,(h.\xi,h.\eta))+\tau(h,(\xi,\eta))\qquad (g,h\in\Gamma).
\]

For future record we note the following lemma.

\begin{lemma} \label{lem:tausigma}
For every compact subset $K\subset \dbG$ there exists a constant $C\geq 0$ such that for every $g\in \Gamma$, 
the function 
\[ \dbG \to \bbR, \quad (\xi,\eta) \mapsto |\tau(g,(\xi,\eta))+\sigma(g,\xi)| \]
is uniformly bounded by $C$ on the set $K\cap g^{-1}K$.
\end{lemma}

\begin{proof}
Note that $F(\xi,\eta)=\delta_\Gamma\cdot\Gprod{\xi}{\eta}{o}+O(1)$
is globally bounded from below and it is bounded form above on compact subsets of $\dbG$.
It follows that for every compact subset $K\subset \dbG$ there exists a constant $C'\geq 0$ such that for every $g\in \Gamma$, 
the function 
\[ \dbG \to \bbR, \quad (\xi,\eta) \mapsto |\tau(g,(\xi,\eta))+\rho(g,\xi)| = |\frac{1}{2}\nabla_g F(\xi,\eta)| \]
is uniformly bounded by $C'$ on the set $K\cap g^{-1}K$.
We are done by the fact that	$\rho-\sigma$ is uniformly bounded.
\end{proof}

\subsection{The space $\dbG\times \bbR$} % (fold)
\label{sub:dbGR}\hfill{}\\
We use the cocycle $\tau$ to define a measurable $\Gamma$-action on the $\bbR$-extension $\dbG\times \bbR$ 
of the $\Gamma$-action on $(\dbG,\BMSm)$ by the formula
\begin{equation}\label{e:Gamma-action}
	\actn{g}{(\xi,\eta,t)}:=\left(g\xi,g\eta,t+\tau(g,(\xi,\eta)\right).
\end{equation}
Since we used an actual (measurable) cocycle $\tau$, rather than an almost cocycle, we obtain a (measurable) 
$\Gamma$-\textbf{action}, namely we have an identity:
\[
	\actn{gh}{(\xi,\eta,t)}=\actn{g}{\left(\actn{h}{(\xi,\eta,t)}\right)}\qquad (g,h\in\Gamma).
\]
The key properties of this action are summarized in the following Proposition.
We denote by $\Leb$ the Lebesgue measure on $\bbR$.
\begin{prop}\label{P:mgf}\hfill{}\\
	The above measurable $\Gamma$-action on $(\dbG\times\bbR,\BMSm\times\Leb)$ 
	has the following properties:
	\begin{itemize}
		\item[{\rm (a)}] It preserves the infinite measure $\BMSm\times\Leb$.
		\item[{\rm (b)}] It commutes with the $\Phi^\bbR$-action $\Phi^s:(\xi,\eta,t)\mapsto(\xi,\eta,t+s)$.
		\item[{\rm (c)}] It commutes with the flip: $(\xi,\eta,t)\mapsto (\eta,\xi,-t)$.
		\item[{\rm (d)}] It is at essentially bounded distance from the $\Gamma$-almost-action (\ref{e:almost-action}); 
		more precisely there exists a function $\ s\in L^\infty(\dbG,\BMSm)$ so that
		\[
			\actn{g}{(\xi,\eta,t)}=\Phi^{s(\xi,\eta)} \left(\nactn{g}{(\xi,\eta,t)}\right)\qquad (g\in\Gamma).
		\]
		\item[{\rm (e)}] 
		There is a measurable precompact subset $\hat{X}\subset \dbG\times\bbR$ that meets a.e. $\Gamma$-orbit once.
		We choose the scaling for $\BMSm$ so that $\BMSm\times\Leb(\hat{X})=1$.
		\item[{\rm (f)}]
		The size $|\Stab_\Gamma(\xi,\eta,t)|$ of the $\Gamma$-stabilizer of $(\xi,\eta,t)\in\dbG\times\bbR$ is an $\BMSm\times \Leb$-essentially bounded function.
		\item[{\rm (g)}]
		The quotient space $X=(\dbG\times\bbR)/\Gamma$ is endowed with a probability measure $\BMm$ and 
		measure-preserving flow $\phi^\bbR$, satisfying $\phi^t\circ \pr=\pr\circ \Phi^t$  a.e.,
		and a flip $w:X\to X$ so that $w\circ \phi^t=\phi^{-t}\circ w$.
		It admits a measure-space isomorphism 
		\[
			(X,\BMm)\to (\hat{X},\BMSm\times\Leb|_{\hat{X}}),\quad x\mapsto\hat{x}.
		\]
	\end{itemize}
\end{prop}
\begin{proof}%[Proof of Proposition~\ref{P:mgf}]
	Statements (a), (b) and (c) follow from the definition of the $\Gamma$-action.
	The $\Gamma$-action clearly preserves the $\BMSm\times\Leb$ measure and satisfies (d)
	because $|\rho-\sigma|$ is uniformly bounded.

	Given a measurable subset $A\subset \dbG\times\bbR$ the 
	cardinality of the intersection $f_A(q)=|A\cap \Gamma.q|$
	is a measurable function $f_A:\dbG\times\bbR\to \{0,1,2,\dots,\infty\}$.
	Fix a point $p\in M$,  and a positive real $R$.
	The set $A_R=\pi^{-1}(\Ball{p}{R})\subset \dbG\times\bbR$ is measurable of finite $\BMSm\times\Leb$-measure.
	It follows from (d), Proposition~\ref{P:equi-map}, and the fact that the $\Gamma$-action on $M$ is proper,
	that for all $R$ the function $f_{A_R}$ is essentially bounded.
	% Moreover, for $q\in \dbG\times\bbR$ the set $\Gamma_{R,q}=\setdef{g\in\Gamma}{g.q\in A_R}$
	% is also bounded and varies measurably in $q$.
	On the other hand, as the $\Gamma$-action on $M$ is cocompact,
	for $R$ large enough, for a.e. $q$ we have $f_{A_R}(q)\ge 1$.
	There exists a measurable choice of a point from the finite intersections $A_R\cap \Gamma.q$.
	Such a choice gives a required subset $\hat{X}\subset A_R$ with a.e. $f_{\hat{X}}=1$.
	This proves (e). Statement (f) follows from (d) and the fact that the actions of $\Gamma$ and $\Phi^\bbR$ commute.
	
	Note that given any two measurable subsets $\hat{X}, \hat{Y}\subset \dbG\times\bbR$ 
	with $f_{\hat{X}}=f_{\hat{Y}}=1$ a.e.,
	the map $\hat{X}\to \hat{Y}$ given by $\hat{x}\mapsto \Gamma.\hat{x}\cap \hat{Y}$,
	is a measurable bijection (mod null sets) that is piecewise translation by elements of $\Gamma$.
	Therefore it is measure-preserving.
	It follows that the measure $\BMSm\times\Leb(\hat{X})$ does not depend on a particular choice of $\hat{X}$ with $f_{\hat{X}}=1$.
	In particular, the normalization of $\BMSm$ by the requirement that $\BMSm\times\Leb(\hat{X})=1$, is well-defined.
\end{proof}

\begin{remark}\label{R:other-cocycles}
	Let $\tau':\Gamma\times (\dbG,\BMSm)\to \bbR$ be a cocycle measurably cohomologous to $\tau$, i.e. assume that
	\[
		\tau'(g,(\xi,\eta))-\tau(g,(\xi,\eta))=\nabla_g H(\xi,\eta)
	\]
	for some measurable $H:\dbG\to\bbR$. 
	Then $\tau'$ can be used to define a $\Gamma$-action on $\dbG\times\bbR$,
	which is measurably isomorphic to (\ref{e:Gamma-action}), 
	via $((\xi,\eta,t)\mapsto (\xi,\eta,t+H(\xi,\eta)))$, and still commutes with $\Phi^\bbR$, but not necessarily with the flip.
	In particular, one could use (as in \cite{F:cg}) the measurable cocycle 
	\[
		\rho_1(g,(\xi,\eta)):=-\rho(g,\xi),\qquad \textrm{or}\qquad \rho_2(g,(\xi,\eta)):=\rho(g,\eta).
	\]
	This leads to the same normalization of $\BMSm$ and the same (i.e. measure-theoretically isomorphic)
	action of $\Gamma\times\bbR$ on $(\dbG\times\bbR,\BMSm\times\Leb)$.
\end{remark}

\begin{remark}
Note that in case $\Gamma$ is torsion free, by Proposition~\ref{P:mgf}(e) and (f) we get that $\hat{X}$ is a fundamental domain for the 
$\Gamma$-action on $\dbG\times \bbR$, that is Theorem~\ref{C:ess-free}.
In general, this Theorem will be derived from Theorem~\ref{T:rSAT} in \S\ref{sec:erg-via-Ldiff}.
\end{remark}

\subsection{The measured geodesic flow $X$ and the associated cocycle $\gamma$} % (fold)
\label{sub:measured_geodesic_flow}\hfill{}\\
The system $(X,\BMm,\phi^\bbR)$
defined in Proposition~\ref{P:mgf}(g) is regarded as the {\em measured geodesic flow} associated with $\Gamma,[d]$.
The measure $\BMm$ is denoted the {\em Bowen-Margulis measure}.
We fix hereafter a precompact measurable subset $\hat{X}\subset\dbG\times\bbR$ as in Proposition~\ref{P:mgf}
and let $X\to \hat{X}$, $x\mapsto \hat{x}=(x_-,x_+,t_x)$, be the corresponding measurable cross-section of the quotient map
$\dbG\times\bbR\to X$. 
For $x\in X$ we think of $x_-$ and $x_+$ as the end points of the geodesic associated with $x$ and $t_x$ as the position of $x$ on this geodesic.

The measure geodesic flow comes equipped with a certain cocycle $\gamma:\bbR\times X\to \Gamma$.
Its existence is guaranteed by the following lemma,
which also shows some of its properties.
However, the fact that $\gamma$ is actually a cocycle will be proven only later, as a result of Theorem~\ref{C:ess-free}, see Remark~\ref{R:ess-free} below.
This fact will not be used prior to the proof of Theorem~\ref{C:ess-free}.

\begin{lemma}\label{L:like-cocycle}
%	Let $X\to \hat{X}\subset\dbG\times\bbR$ be as above.
	There exists a measurable map $\gamma:\bbR\times X\to \Gamma$ with the property that for a.e. $x\in X$
	\begin{equation}\label{e:def-gamma-t-x}
		\Phi^t(\hat{x})\in \gamma_{t,x}^{-1} \hat{X}\qquad (t\in\bbR).
	\end{equation}
	There exists $C$, so that $d(\pi(x_-,x_+,t),\gamma_{t,x}^{-1}o)\le C$ for $\BMm$-a.e. $x\in X$ for all $t\in\bbR$,
	and therefore 
	\[ 
			|\gamma_{t,x}|=|t|+O(1),\qquad
			\lim_{t\to \infty} \gamma_{t,x}^{-1}o=x_+,\qquad\lim_{t\to -\infty} \gamma_{t,x}^{-1}o=x_-
	\] 
	for $\BMSm$-a.e. $x\in X$.
\end{lemma}
\begin{proof}
	By Proposition~\ref{P:mgf}(e) for a.e. $x\in X$ and every $t\in \bbR$ there exists $\gamma\in \Gamma$
	with $\Phi^t(\hat{x})\in \gamma^{-1}\hat{X}$, and by (f) there are at most finitely many such $\gamma$.
	Thus there is a measurable choice of such $\gamma_{x,t}$. 
	The bound $d(\pi(x_-,x_+,t),\gamma_{t,x}^{-1}o)\le C$ follows from Proposition~\ref{P:mgf}(d)
	and the fact that $\hat{X}$ is precompact in $\dbG\times\bbR$ (equivalently, the fact that $\pi(\hat{X})$ is bounded in $M$).
	This implies the bound $|\gamma_{t,x}|=|t|+O(1)$ and the convergence statements follow from
	Lemma~\ref{L:contraction}.
\end{proof}

\begin{remark}
	In the geometric Example~\ref{E:main}.(a), $X$ represents the unit tangent bundle $T^1N$, 
	$\BMm$ is the Bowen--Margulis measure on $T^1N$, 
	and $\phi^\bbR$ is the geodesic flow. 
In this case, 
	the cocycle $\gamma:\bbR\times T^1N\to \pi_1(N,o)$ can be defined as follows.
	Fix a base point $o\in N$, for each $p\in N$ choose in a measurable way a path $\omega_{o,p}$ in $N$ connecting $o$ to $p$.
	Then for $t\in\bbR$ and $x\in T^1N$ let $\gamma_{t,x}\in \pi_1(N,o)$ be the homotopy class of the path obtained by connecting 
	$o$ to the base point of $x$, followed by the geodesic flow for time $t$, and then using the chosen path to connect back to $o$.
\end{remark}

% section measurable constructions (end)

\section{Stronger Ergodicity via Lebesgue differentiation} % (fold)
\label{sec:erg-via-Ldiff}

Our goal in this section is to prove Theorem~\ref{T:rSAT} and 
its corollaries: Theorem~\ref{T:ess-free},	Corollaries~\ref{C:WM} and  Theorem~\ref{C:ess-free}.
Note that Theorem~\ref{T:rSAT} in particular implies also the ergodicity of the action $\Gamma\acts (\dbG,[\PSnu\times\PSnu])$,
that is Theorem~\ref{T:double-erg}.

In the proof of Theorem~\ref{T:rSAT} we will use a version of Lebesgue differentiation combined
with Poincar\'e recurrence for the $\phi^{\bbR}$-flow on the probability space $(X,\BMm)$.

%%%%%%%%%%%

\begin{prop}\label{P:Lebesgue}\hfill{}\\
	Given $f\in L^1(\bG,\nu)$, for $m$-a.e. $x\in X$ one has
	\[
		\lim_{t\to\infty} \int_\bG \left|f(\gamma_{t,x}^{-1}\xi)-f(x_+)\right|\dd \nu(\xi) = 0
	\]
	and therefore 
	\[
		\lim_{t\to\infty} \int_\bG f(\gamma_{t,x}^{-1}\xi)\dd \nu(\xi) = f(x_+).
	\]
	In particular, for any measurable $E\subset \bG$, for $m$-a.e. $x\in X$ one has
	\[
		\lim_{t\to\infty}\nu\left(\gamma_{t,x}E\right)= 1_E(x_+).
	\]
\end{prop}

\begin{proof}
	For $t\ge 0$ let $L_t:\bbR\to\bbR$ denote the piecewise linear function 
	\[
		L_t(s)=\left\{\begin{array}{lll} 	
			-t & \textrm{for} & s\le 0\\
			2s-t & \textrm{for} & 0\le s\le t\\
			t & \textrm{for} & t\le s.
			\end{array}\right.
	\]	
	Then for all $x\in X\subset \dbG\times \bbR$, $t\ge 0$, $\xi\in\bG$ one has 
	\[
		\sigma(\gamma_{t,x},\xi)=L_t(\Gprod{x_+}{\xi}{o})+O(1)
	\]
	according to the location of the "projection" $\QM(\xi,x_+,x_-)$ of $\xi$ to 
	the almost geodesic $\pi(x_-,x_+)$.
	This implies that for $t\ge 0$ for a.e. $x\in X$ and $\xi\in\bG$ 
	\begin{equation}\label{e:RN-L}
		\frac{\dd\gamma_{t,x}^{-1}\nu}{\dd\nu}(\xi)
		=e^{\delta_\Gamma\cdot \sigma(\gamma_{t,x},\xi)+O(1)}
		=e^{\delta_\Gamma\cdot L_t(\Gprod{x_+}{\xi}{o})+O(1)}
	\end{equation}
	(Note that the left hand side is defined only a.e.).

	Fix $f\in L^1(\bG,\nu)$.
	By Theorem~\ref{T:Lebesgue} and the fact that the map $(X,\BMm)\to (\bG,\nu)$, $x\mapsto x_+$, is measure class preserving, 
	we have for a.e $x\in X$, 
	\begin{equation}\label{e:LDTB}
		\lim_{s \to \infty}\ \frac{1}{\nu(\Ball{x_+}{e^{-\kappa s}})}
		\int_{\Ball{x_+}{e^{-\kappa s}}} \left|f(\xi)-f(x_+)\right|\dd \nu(\xi)=0.
	\end{equation}
	We let $X_0\subset X$ be the subset consisting of elements $x\in X$ for which Equations~(\ref{e:RN-L}) and (\ref{e:LDTB}) 
	hold. This is a full measure set.
	We will show that for every $x\in X_0$
	\[ 
		\begin{split}
			\lim_{t\to\infty} &\int_\bG \left|f(\gamma_{t,x}^{-1}\xi)-f(x_+)\right|\dd \nu(\xi)\\
			&=\lim_{t\to\infty}\int_\bG \left|f(\xi)-f(x_+)\right|\frac{\dd\gamma_{t,x}^{-1}\nu}{\dd\nu}(\xi)\dd \nu(\xi)=0.
		\end{split}
	\]
	In view of (\ref{e:RN-L}) it is equivalent to showing
	\begin{equation}\label{e:Lt}
		\lim_{t\to\infty}\int_\bG \left|f(\xi)-f(x_+)\right|\cdot e^{\delta_\Gamma\cdot L_t(\Gprod{x_+}{\xi}{o})}\dd\nu(\xi)=0. 
	\end{equation}
	To shorten notations let 
	\[
		A(s)=\setdef{\xi\in\bG}{\Gprod{x_+}{\xi}{o}\ge s},\qquad I_s=\int_{A(s)}|f(\xi)-f(x_+)|\dd\nu(\xi).
	\]
	We shall establish (\ref{e:Lt}) by showing the asymptotic vanishing of the following integrals:
	\begin{eqnarray}
			\label{e:part1}	 &&\int_{\bG\setminus A(0)} |f(\xi)-f(x_+)|\cdot e^{-\delta_\Gamma\cdot t}\dd\nu(\xi),\\
			\label{e:part2}	 &&\int_{A(0)\setminus A(t)}  |f(\xi)-f(x_+)|\cdot 
				e^{\delta_\Gamma\cdot (2\Gprod{x_+}{\xi}{o}-t)}\dd\nu(\xi),\\
			\label{e:part3}	 &&\int_{A(t)} |f(\xi)-f(x_+)|\cdot e^{\delta_\Gamma\cdot t}\dd\nu(\xi)=I_t\cdot e^{\delta_\Gamma\cdot t},
	\end{eqnarray}
	The fact that (\ref{e:part1}) converges to $0$ is immediate.  
	The asymptotic vanishing of (\ref{e:part3}) is a consequence of (\ref{e:LDTB}) and the fact that for some $c$ and $K$ 
	\[
		A(t)\subset \Ball{x_+}{e^{-\kappa t +c}},
		\qquad
		e^{\delta_\Gamma \cdot t}\le \frac{K}{\nu\left(\Ball{x_+}{e^{-\kappa t +c}}\right)}.
	\]
	To see that (\ref{e:part2}) tends to $0$ as $t\to\infty$, we take $n=\lceil t\rceil$ and estimate 
	\[
		\begin{split}
			(\ref{e:part2})&\le \sum_{k=0}^{n-1} \int_{A(k)\setminus A(k+1)} 
				e^{\delta_\Gamma (2k+2-n)}\cdot |f(\xi)-f(x_+)|\dd\nu(\xi)\\
			&= \sum_{k=0}^{n-1} e^{\delta_\Gamma (2k+2-n)} I_k
			= \sum_{k=0}^{n-1} e^{\delta_\Gamma(2+k-n)}\cdot (e^{\delta_\Gamma k} I_k).
		\end{split}
	\]
	Denote $w_{n,k}=e^{\delta_\Gamma(2+k-n)}$, $a_k=e^{\delta_\Gamma k} I_k$, and observe that 
	the Lebesgue differentiation argument (that was used to estimate (\ref{e:part3})) implies that
	\[
		\lim_{k\to\infty} a_k=0.
	\]
	An elementary calculation shows:
	\[
		\forall k,\ \lim_{n\to\infty} w_{n,k}=0,\qquad B=\sup_{n\ge 1}\left(\sum_{k=0}^{n-1}w_{n,k}\right)<+\infty.
	\]
	Taking large $n_0$ and $n\gg n_0$ we can estimate 
	\[
		\sum_{k=0}^{n-1} w_{n,k}a_k\le \sum_{k=0}^{n_0} w_{n,k}a_k + B\cdot (\max_{k>n_0} a_k)
	\]
	and deduce $\left(\sum_{k=0}^{n-1}w_{n,k}a_k\right)\to 0$ as $n\to\infty$.
	% We observe that while  $e^{\delta_\Gamma(2+k-n)}\to 0$ as $n\to\infty$ for every fixed $k$, 
	% the sums $\sum_{k=0}^{n-1}e^{\delta_\Gamma(k-n)}$ are bounded by a constant $B=e^{2\delta_\Gamma}/(e^{\delta_\Gamma}-1)$.
	% The Lebesgue differentiation argument that was used to estimate (\ref{e:part3}) implies that
	% \[
	% 	\lim_{k\to\infty}(e^{\delta_\Gamma k} I_k)=0.
	% \]
	% Thus for large $n_0$ and $n\gg n_0$ we have 
	% \[
	% 	\sum_{k=0}^{n-1} e^{\delta_\Gamma(2+k-n)}\cdot (e^{\delta_\Gamma k} I_k)
	% 	\le \sum_{k=0}^{n_0}e^{\delta_\Gamma(2+k-n)}\cdot (e^{\delta_\Gamma k} I_k)+B\cdot \max_{k\ge n_0}{(e^{\delta_\Gamma k} I_k)}.
	% \]	
	% This shows 
	% \[
	% 	\lim_{n\to\infty}\left(\sum_{k=0}^{n-1} e^{\delta_\Gamma(2+k-n)}\cdot (e^{\delta_\Gamma k} I_k)\right)=0.
	% \]
	This proves (\ref{e:Lt}), and establishes the first claim in the proposition. 
	The second claim follows from the inequality
	\[
		\left|\int_\bG f(\gamma_{t,x}^{-1}\xi)\dd \nu(\xi) - f(x_+)\right|\le
		\int_\bG \left|f(\gamma_{t,x}^{-1}\xi)-f(x_+)\right|\dd\nu(\xi),
	\]
	and applying it to $f=1_{E}$ shows the third claim.
\end{proof}

\begin{proof}[Proof of Theorem~\ref{T:rSAT}]\hfill{}\\
	Denote $\pr_-,\pr_+:\dbG\to\bG$ the projections to the first and second components, and for a set
	$A\subset\dbG$ and $\xi,\eta\in \bG$ denote the slices
	\[
		A^+_\xi:=\setdef{\eta\in\bG}{(\xi,\eta)\in A },\qquad
		A^-_\eta:=\setdef{\xi\in\bG}{ (\xi,\eta)\in A }.
	\]
	To prove relative (SAT) we shall show that given  $A\subset \dbG$ with $\nu\times\nu(A)>0$ and $\epsilon>0$,
	there is $g\in\Gamma$ and a positive measure subset $B\subset \pr_-(A)\cap g\left(\pr_-(A)\right)$ so that
	for all $\xi\in B$
	\[
		\nu\left(g^{-1}A^+_\xi\right)>1-\epsilon.
	\]
	Observe that it suffices to show this claim for any positive measure subset of $A$,
	or any fixed $\Gamma$-translate $g_0A$ of $A$.
	Since $\Gamma$-translates of $X$ cover $\dbG\times\bbR$, up to a null set,
	upon replacing the given $A$ by a subset of some translate $g_0A$, we may assume that
	$X_A:=\setdef{x\in X}{(x_-,x_+)\in A}$ has $\BMm(X_A)>0$.
	By Proposition~\ref{P:Lebesgue} for a full measure subset of $x\in X_A$ we have
	\[
		\lim_{n\to\infty}\nu(\gamma_{n,x}^{-1}A^+_{x_+})=1.
	\]
	Hence there exists $N$, so that the set
	\[
		X_{A,N}:=\setdef{ x\in X_A }{ \forall n\ge N:\quad \nu\left(\gamma_{n,x}^{-1} A^+_{x_+}\right)>1-\epsilon }
	\]
	has $\BMm(X_{A,N})>0$. 
	We can now apply Poincar\'e recurrence theorem on $(X,\BMm,\phi^\bbR)$ to deduce that there exists $n>N$ for which
	\[
		\BMm(\phi^{-n} X_{A,N}\cap X_{A,N})>0.
	\]
	Denote $Y=\phi^{-n} X_{A,N}\cap X_{A,N}$, and consider its partition according to the $\Gamma$-value of $\gamma_{n,x}$:
	\[
		Y=\bigsqcup_{g\in\Gamma}Y_g,\qquad Y_g=\setdef{ x\in Y }{ \gamma_{n,x}^{-1}=g }.
	\]
	Choose $g\in\Gamma$ so that $\nu\times\nu(Y_g)>0$, set $B=\pr_-(Y_g)$, and observe that:
	\begin{itemize}
		\item $\nu(B)>0$, because $\nu\times\nu(Y_g)>0$.
		\item $B\subset \pr_-(A)\cap g^{-1}\left(\pr_-(A)\right)$, because
		 	$B\subset \pr_-(X_{A,N})\subset \pr_-(A)$ and
		 	\[
				g^{-1}B\subset \pr_-(g^{-1} C_g)\subset \pr_-(X_{A,N})\subset\pr_-(A).
			\]
		\item For $\xi\in B$ there is $x\in Y_g$ so that $\xi=x_-$ and, since $\gamma_{n,x} =g$, we have
		\[
			\nu\left(gA^+_{\xi}\right)=\nu\left(\gamma_{n,x}^{-1} A^+_{x_+}\right)>1-\epsilon
		\]
		as required.
	\end{itemize}
This completes the proof that $\pr_-,\pr_+:\dbG\to\bG$ are relatively SAT.
By Equation~(\ref{eq:rel}) it follows that these maps are also relatively metrically ergodic.
The last statement of the Theorem follows from \cite{BF:icm}*{Remark~2.4(1)}.
\end{proof}

%%%%
\medskip

\begin{proof}[Proof of Theorem~\ref{C:ess-free}]\hfill{}\\
In view of Proposition~\ref{P:mgf}(e), we only need to show that the point stabilizers for the $\Gamma$-action on $\dbG\times\bbR$ are a.e trivial.
They are a.e finite, by Proposition~\ref{P:mgf}(f).
Denoting by $\operatorname{FSub}_\Gamma$ the countable collection of all finite subgroups of $\Gamma$ and endowing it with the conjugation
action of $\Gamma$,
 we obtain an a.e defined measurable $\Gamma$-equivariant map $\Stab:\dbG\times\bbR \to  \operatorname{FSub}_\Gamma$, taking a point to its stabilizer.
As the $\Gamma$ action on $\dbG\times\bbR$ commutes with $\Phi^\bbR$, this map is $\Phi^\bbR$-invariant, thus descends to 
an a.e defined measurable $\Gamma$-equivariant map $\dbG \to  \operatorname{FSub}_\Gamma$.
Endowing $\operatorname{FSub}_\Gamma$ with the discrete metric and using the metric ergodicity of the $\Gamma$-action on $\dbG$, Theorem~\ref{T:rSAT}, we conclude that the image of the latter map is essentially constant and this constant is $\Gamma$-invariant,
that is a finite normal subgroup $N\lhd \Gamma$.
It follows that $N$ acts essentially trivially on $\dbG\times\bbR$, and therefore it also acts essentially trivially on its quotient $\bG$.
As the measure $[\PSnu]$ is fully supported, we conclude that $N$ acts trivially on $\bG$.
But the $\Gamma$-action on $\bG$ is faithful by assumption, thus $N=\{e\}$ and we conclude that $\Stab$ is essentially constant and its image is $\{e\}$.
This shows that the point stabilizers for the $\Gamma$-action on $\dbG\times\bbR$ are indeed a.e trivial.
\end{proof}

\begin{remark} \label{R:ess-free}
By the fact that $\hat{X}$ is a fundamental domain, Equation~(\ref{e:def-gamma-t-x}) in Lemma~\ref{L:like-cocycle}
uniquely defines $\gamma$, and we get that $\gamma:\bbR\times X\to \Gamma$ satisfies the cocycle identity
\[ \gamma_{t+s,x}=\gamma_{t,\phi^s (x)}\gamma_{s,x} \]
for every $t,s\in \bbR$, for $\BMm$-a.e $x\in X$.
\end{remark}

\begin{proof}[Proof of Theorem~\ref{T:ess-free}]\hfill{}\\
We will first show that the $\Gamma$-action on $\dbG$ is essentially free.
We will assume by contradiction that this is not the case.
By ergodicity we get that the stabilizer of a.e point in $\dbG$ is non-trivial.
By Theorem~\ref{C:ess-free} we have that the $\Gamma$-action on $\dbG\times \bbR$ via the cocycle $\tau$ is essentially free.
It follows that points stabilizers in $\dbG$ are embeddable in $\bbR$.
As abelian subgroups of $\Gamma$ are virtually cyclic, 
it follows that points stabilizers in $\dbG$ are infinite cyclic.
We denote by $\operatorname{CSub}_\Gamma$ the countable collection of all infinite cyclic subgroups of $\Gamma$ and 
obtain an a.e defined measurable $\Gamma$-equivariant map $\Stab:\dbG \to  \operatorname{CSub}_\Gamma$, taking a point to its stabilizer.
Arguing, mutatis mutandis, as in the proof of Theorem~\ref{C:ess-free} we obtain an infinite cyclic normal subgroup $N\lhd \Gamma$ that acts trivially
on $\bG$, contradicting the faithfulness assumption of the $\Gamma$-action.
We conclude that, indeed, the $\Gamma$-action on $\dbG$ is essentially free.

We will now show that the $\Gamma$-action on $\bG$ is essentially free.
Fix $g\in \Gamma$ that fixes a set $A\subset \bG$ of positive $\PSnu$-measure.
Then $g$ fixes also the set $A\times A\subset\dbG$ which is of positive $\BMSm$-measure.
By the fact that the $\Gamma$-action on $\dbG$ is essentially free we conclude that $g=e$.
It follows that, indeed, the $\Gamma$-action on $\bG$ is essentially free as well.
\end{proof}

%%%%%%%%%%%%%%%

\begin{proof}[Proof of Corollary~\ref{C:WM}]\hfill{}\\
	Combine Theorem~\ref{T:rSAT} with Lemma~\ref{P:SAT-erg}.
\end{proof}

% subsection ergodicity via Lebesgue diff (end)

\section{Double Ergodicity via Hopf argument} % (fold)
\label{sec:double_ergodicity}

In this section we prove Theorem~\ref{T:generg} and its Corollary~\ref{C:avaragescheme},
giving along the way an alterantive proof of
Corollary~\ref{C:WM}.
Note that Corollary~\ref{C:WM} implies Theorem~\ref{T:double-erg}
and Theorem~\ref{T:generg} implies Theorem~\ref{T:erg}
as special cases.
A main step in our proof is Proposition~\ref{prop:Iprop} below.
In this section we are not realying on any result obtained in Section~\ref{sec:erg-via-Ldiff},
thus the proofs of Corollary~\ref{C:WM} and Theorem~\ref{T:double-erg} we obtain here are independent of the proofs provided in 
Section~\ref{sec:erg-via-Ldiff}.

Throughout this section we fix
an ergodic measure-preserving $\Gamma$-action on a probability space $(\Omega,\omega)$.
In order to show that the product space $(\dbG\times \Omega,\BMSm\times \omega)$
is $\Gamma$-ergodic, we consider its space of ergodic components $Y$, endowed with 
the quotient measure class $[\mu]$.
We view $(\dbG\times \Omega,\BMSm\times \omega)$ as the space of $\bbR$-ergodic components 
of the space $(\dbG\times \Omega\times\bbR,\BMSm\times\Leb\times \omega)$.
The latter space is endowed with a commuting $\Gamma$ and $\bbR$ actions,
where $\bbR$ acts on the $\bbR$ coordinate and the $\Gamma$ action is an extension the action discussed in Section~\ref{sub:dbGR},
using the cocyle $\tau$.
By Proposition~\ref{P:mgf}(e,f) 
we have that the $\Gamma$-action on $(\dbG\times \Omega\times\bbR,\BMSm\times\Leb\times \omega)$
admits a measurable fundamental domain of finite measure,
thus the corresponding quoient space, which we denote $\bar{X}$,
admits a finite $\bbR$-invariant measure, which we denote $\barBMm$.
We identify the sapce of $\bbR$-ergodic components of $(\bar{X},\barBMm)$ with $Y$ and we 
choose $\mu$ in the given measure class $[\mu]$ to be the measure obtianed as the image of $\barBMm$.
We thus obtained the following commutative diagram.
\begin{equation}\label{e:quotients}
	\begin{tikzcd}
		 &  (\dbG\times\bbR\times \Omega,\BMSm\times\Leb\times \omega) 
		\arrow[dl,"\text{p}"']\arrow[dr,"\text{q}"] & \\
		(\bar{X},\barBMm)  \arrow[dr,"\text{u}"] \arrow[densely dotted, bend left]{ur}{\bar{p}}& & (\dbG\times \Omega,\BMSm\times \omega) \arrow[dl,"\text{v}"']\\
& (Y,\mu) & 
	\end{tikzcd}
\end{equation}
where the measures $\barBMm$ and $\mu$ are finite and $\bbR$-invariant and the map $u$ is measure preserving,
the maps $u$ and $q$ are defined as $\bbR$-ergodic components maps and 
the maps $v$ and $p$ are defined as $\Gamma$-ergodic components maps.
Note that all the spaces above are endowed with a natural flip action
and the maps $p,q,u$ and $v$ are equivariant with respect to these flips.
The dotted arrow $\bar{p}$ is obtained by fixing a fundamental domain for the $\Gamma$-action on 
$(\dbG\times\bbR\times \Omega,\BMSm\times\Leb\times \omega)$.
We chose this fundamenatl domain so that its image in $\dbG\times\bbR$ is precompact.
We introduce an explicit coordinate notation for $\bar{p}$ by writing, for $x\in \bar{X}$,

\begin{equation} \label{eq:barp}
\bar{p}(x)=(x_-,x_+,t_x,w_x)\in \dbG\times\bbR\times \Omega.
\end{equation}

\begin{remark}
In view of Corollary~\ref{C:WM}, $Y$ is a trivial space and in view of 
Theorem~\ref{C:ess-free}
we have that $(\bar{X},\barBMm)\simeq (X\times \Omega,\BMm\times \omega)$
where the latter is endowed with the $\bbR$-action asscoaited with the cocycle $\gamma$
defined in Lemma~\ref{L:like-cocycle}, see Remark~\ref{R:ess-free}.
We will not use these facts a priori, as we aim to have this section idependent Section~\ref{sec:erg-via-Ldiff}.
\end{remark}

The quotient maps in $p,q,u$ and $v$ in Diagram~(\ref{e:quotients}) can be used to define the push forward operators between the spaces of signed measures on the Lebesgue spaces
$\dbG\times\bbR$, $X$, $\dbG$ and $Y$ which are absolutely continuous with respect to the measures $\BMSm\times\Leb$, $\BMm$, $\BMSm$ and $\mu$ correspondingly. 
By the Radon-Nikodym Theorem, these spaces are naturally identified with the corresponding $L^1$ spaces, thus we get the operators 
$P,Q,U$ and $V$ in the following commutative diagram.
\begin{equation}\label{e:L1}
	\begin{tikzcd}
		 & L^1(\dbG\times\bbR\times \Omega,\BMSm\times\Leb\times\omega) \arrow[ddl,"\text{P}"']\arrow[ddr,"\text{Q}"] & \\
		& & \\
		L^1(\bar{X},\barBMm)  \arrow[dr,"\text{U}"]& &  L^1(\dbG\times\Omega,\BMSm\times\omega) \arrow[dl,"\text{V}"'] \arrow[densely dotted, bend right]{uul}[swap]{R_\theta} \\
& L^1(Y,\mu)  &
	\end{tikzcd}
\end{equation}

%
%We have the natural identifications of invariant subspaces in the corresponding spaces of classes of measurable functions modulo a.e agreements. 
%\begin{equation} \label{eq:invariants}
%L(Y,\mu) \simeq L(X,\BMm)^\bbR \simeq L(\dbG,\BMSm)^\Gamma \simeq
%L(\dbG\times\bbR,\BMSm\times\Leb)^{\Gamma\times \bbR}.
%\end{equation}
%
%Theorem~\ref{T:double-erg} is exactly the statement that the space $(Y,\mu)$ is trivial (that is a singleton),
%equivalently that the function spaces in (\ref{eq:invariants}) are all one-dimensional. 

In view of the explicit constructions of the measures $\BMSm\times\Leb$, $\barBMm$, $\BMSm$ and $\mu$ we can give explicit description of the operators 
$P,Q$ and $U$ as follows.
The operator $P:L^1(\dbG\times\bbR\times \Omega,\BMSm\times\Leb\times\omega)\to L^1(\bar{X},\barBMm)$ is given by summation over 
the $\Gamma$-orbits
\[
			P(f):=\sum_{g\in\Gamma} f\circ g,
\]
the operator $Q:L^1(\dbG\times\bbR\times \Omega,\BMSm\times\Leb\times\omega)\to L^1(\dbG\times\Omega,\BMSm\times\omega) $ is given by integration over 
the $\bbR$-orbits
\[
			Q(f)(\xi,\eta,w):=\int_\bbR f(\xi,\eta,t,w)\dd t
\]
and the operator $U:L^1(\bar{X},\barBMm) \to L^1(Y,\mu)$ is the operator of integration over fibers, or equivalently the conditional expectation operator.
By Birkhoff's ergodic theorem, for $f\in L^1(\bar{X},\barBMm)$,
for every $a,b$ and $\BMm$-a.e. $x \in \bar{X}$ we have 
\begin{equation} \label{eq:Birk}
U(f)(y) = \lim_{T\to\infty} \frac{1}{T}\int_a^{b+T} f(\phi^{-t} x)\dd t,
\end{equation}
where $y=u(x)$.

We stress that the operator $V:L^1(\dbG\times\Omega,\BMSm\times\omega)\to L^1(Y,\mu)$ is not given a priori by an integration over fibers, as the corresponding map
$(\dbG\times\Omega,\BMSm\times\omega)\to(Y,\mu)$ may not have a measure preserving disintegration.
However, we will give in Proposition~\ref{prop:Iprop} an explicit description of $V$ as well.
But first we will explain the dotted arrow in diagram~\eqref{e:L1}.

The subscript $\theta$ in $R_\theta$ denotes a positive \emph{kernel} $\theta$ on $\bbR$,
i.e. a non-negative measurable function $\theta:\bbR\to[0,\infty)$ with 
\[
	\int_\bbR \theta(t)\dd t=1.
\]
Given such a kernel $\theta$ define the operator $R_\theta$ by 
\[
		R_\theta f(\xi,\eta,t,w)=\theta(t)\cdot f(\xi,\eta,w).	
\]

It follows from the definition that the operators $P,Q,U,V$ and $R_\theta$ are all positive, they have norm one
and they are norm preserving on non-negative functions.
We also have that $Q\circ R_\theta=\operatorname{Id}:L^1(\dbG\times\Omega,\BMSm\times\omega)\to L^1(\dbG\times\Omega,\BMSm\times\omega)$
and $V\circ Q=U\circ P$.
From these relations we further get that 
\begin{equation} \label{eq:defU}
V=U\circ P \circ R_\theta :L^1(\dbG\times\Omega,\BMSm\times\omega) \to L^1(Y,\mu)
\end{equation}

We will use quite often the composed operator $P\circ R_\theta:L^1(\dbG\times\Omega,\BMSm\times\omega)\to L^1(X,\BMm)$,
thus we introduce the notation $\bar{f}_\theta:=P\circ R_\theta(f)$ for $f\in L^1(\dbG\times\Omega,\BMSm\times\omega)$.
Explicitly, using the notation of \eqref{eq:barp}, we have for $x\in \bar{X}$
\begin{equation} \label{eq:fbar}
\bar{f}_\theta(x)=\sum_{g\in \Gamma} R_\theta f(g\bar{p}(x))=\sum_{g\in \Gamma} \theta(t_x+\tau(g,x_-,x_+))\cdot f(gx_-,gx_+,gw_x).
\end{equation}
where $\tau$ is the cocycle given in (\ref{e:rho-and-F}).
Specializing for $\theta=1_{[0,1]}$ we abuse our own notation writing
\begin{equation}\label{e:f01}
	\bar{f}_{[0,1]}:=\bar{f}_{1_{[0,1]}}=P\circ R_{1_{[0,1]}} (f)\ \in L^1(\bar{X},\barBMm).
\end{equation}
In view of (\ref{eq:defU}) and (\ref{eq:Birk}), we get an explicit description of $V:L^1(\dbG\times\Omega,\BMSm\times\omega)\to L^1(Y,\mu)$ given by the formula
\begin{equation} \label{eq:V1}
V(f)(y)= \lim_{T\to\infty} \frac{1}{T}\int_a^{b+T} \bar{f}_{[0,1]}(\phi^{-t} x)\dd t, 
\end{equation}
which holds for a given $f\in L^1(\dbG\times\Omega,\BMSm\times\omega)$ for $\barBMm$-a.e $x\in \bar{X}$ and for $y=u(x)$.
The argument of this formula is an element $x\in X$. 
We wish to decribe the operator $V$ using a parameter in $\dbG\times\Omega$.
This will be done by means of the averaging operators $I_a^b$ and $J_a^b$
which we introduce next.
For a $\BMSm\times \omega$-integrable function $f:\dbG\times\Omega\to [0,\infty)$
and a time interval $[a,b] \subset \bbR$ we define the functions $I_a^b(f), J_a^b(f):\dbG\times\Omega\to [0,\infty]$ by
\begin{align*} \label{e:ab-sum}
	I_a^b(f)(\xi,\eta,w) &:= &\frac{1}{b-a}\cdot \sum_{\setdef{g\in\Gamma}{\tau(g,\xi,\eta)\in[a,b]}} f(g\xi,g\eta,gw) \\
J_a^b(f)(\xi,\eta,w) &:= &\frac{1}{b-a}\cdot \sum_{\setdef{g\in\Gamma}{\sigma(g,\xi)\in[a,b]}} f(g\xi,g\eta,gw).
\end{align*}

\begin{prop} \label{prop:Iprop}
For every $f\in L^1(\dbG\times\Omega,\BMSm\times\omega)$ and $\BMSm\times\omega$-a.e point $(\xi,\eta,w)\in \dbG\times\Omega$, 
for every time interval $[a,b] \subset \bbR$
the values $I_a^b(f)(\xi,\eta,w)$ and $J_a^b (f)(\xi,\eta,w)$ are finite
and we have the convergence
%\begin{equation} \label{IconvU}
\[ V(f)(y)=\lim_{T\to\infty} I_a^{b+T} (f)(\xi,\eta,w)= \lim_{T\to\infty} J_{a-T}^{b} (f)(\xi,\eta,w), \]
%\end{equation}
where $y=v(\xi,\eta,w)\in Y$.
\end{prop}

We first note the following easy lemma.

\begin{lemma}
Given two kernels $\theta_1,\theta_2$ on $\bbR$ and given $f\in L^1(\dbG\times\Omega,\BMSm\times\omega)$ we have the relation
\[
\bar{f}_{\theta_2*\theta_1}=\int_\bbR \theta_2(s) \cdot \bar{f}_{\theta_1} \circ \phi^{-s} ds.
\]
\end{lemma}

\begin{proof}
Note that $R_{\theta_1} f\circ \Phi^{-s} (\xi,\eta,t,w)=\theta_1(t-s)\cdot f(\xi,\eta,w)$, thus we get
\[
R_{\theta_2*\theta_1}f = \int_\bbR \theta_2(s) \cdot R_{\theta_1} f \circ \Phi^{-s} ds.
\]
The lemma follows by applying $P$ to both sides of this equation.
\end{proof}

By the above lemma the average of $\bar{f}_{[0,1]}$ by the $\phi^\bbR$-flow over the interval $[a,b]$ satisfies the equation
\begin{equation}\label{e:theta}
	\frac{1}{b-a}\int_a^b \bar{f}_{[0,1]}(\phi^{-t} x)\dd t=\bar{f}_{\theta_a^b}(x)
	=\sum_{g\in\Gamma} \theta_a^b(t_x+\tau(g,x_+,x_-))\cdot f(gx_-,gx_+,gw),
\end{equation}
where $\theta_a^b$ is the convolution
\begin{equation} \label{eq:deftheta}
	\theta_a^b=\frac{1}{b-a}\cdot 1_{[a,b]}*1_{[0,1]}.
\end{equation}
We will use this equation in order to prove some bounds on $I_{a}^{b}(f)$.

\begin{lemma} \label{lem:Ibound}
There exists a constant $c\geq 0$ such that for every $\BMSm\times\omega$-integrable functions $f:\dbG\times\Omega\to [0,\infty)$,
for every time interval $[a,b] \subset \bbR$ with $b\geq a+c+1$ and for every $x\in \bar{X}$,
\[ \frac{1}{b-a}\cdot \int_{a+c}^{b-1-c} \bar{f}_{[0,1]}(\phi^t x)\dd t
\le  I_{a}^{b}(f)(x_-,x_+,w_x) 
\le 
\frac{1}{b-a}\cdot \int_{a-1-c}^{b+c} \bar{f}_{[0,1]}(\phi^t x)\dd t. \]
\end{lemma}

\begin{proof}
Recalling that the image of $\bar{p}(\bar{X})$ in $\dbG\times\bbR$ is precompact, we fix a constant $c\geq 0$ such that for every $x\in \bar{X}$,
$|t_x|\leq c$.

We fix a function $f$, an interval $[a,b]$ and a point $x\in \bar{X}$ as above.
We consider the kernel $\theta_a^b$ defined in (\ref{eq:deftheta})
and note that it is a linear interpolation between the value $1/(b-a)$ on $[a+1,b]$ and $0$ on $(-\infty,a]\cup[b+1,\infty)$, 
thus for every $t\in \bbR$ we get
\[ \frac{1}{b-a}\cdot 1_{[a+1,b]}(t) \leq \theta_a^b(t) \leq \frac{1}{b-a}\cdot 1_{[a,b+1]}.
\]
We conclude that for every $t\in \bbR$
\[ 
\frac{1}{b-a}\cdot 1_{[a+1+c,b-c]}(t) \leq \theta_a^b(t_x+t) \leq \frac{1}{b-a}\cdot 1_{[a-c,b+1+c]}(t).
\]
Fixing $g\in \Gamma$ and substituting $t=\tau(g,x_-,x_+)$ we get
\[
    \begin{split}
        \frac{1}{b-a}\cdot &1_{[a+1+c,b-c]}(\tau(g,x_-,x_+))\cdot f(gx_-,gx_+,gw_x)\\
        &\le \theta_a^b(t_x+\tau(g,x_-,x_+))\cdot f(gx_-,gx_+,gw_x)\\
        &\le \frac{1}{b-a}\cdot 1_{[a-c,b+1+c]}(\tau(g,x_-,x_+)) \cdot f(gx_-,gx_+,gw_x).
    \end{split}
\]
Summing over all $g\in \Gamma$ we get
\[
    \begin{split}
	\frac{b-a-1-2c}{b-a}&\cdot I_{a+1+c}^{b-c}(f)(x_-,x_+,w_x)\\
	&\le\sum_{g\in \Gamma}
	\theta_a^b(t_x+\tau(g,x_-,x_+))\cdot f(gx_-,gx_+,gw_x) \\
        &\le
	\frac{b-a+1+2c}{b-a}\cdot  I_{a-c}^{b+1+c}(f)(x_-,x_+,w_x).
        \end{split}
\]
Thus, by (\ref{e:theta}),
\[
    \begin{split}
	\frac{b-a-1-2c}{b-a}&\cdot I_{a+1+c}^{b-c}(f)(x_-,x_+,w_x)
	\le\frac{1}{b-a}\int_a^b \bar{f}_{[0,1]}(\phi^t x)\dd t \\
        &\le
	\frac{b-a+1+2c}{b-a}\cdot \sum_{g\in \Gamma} I_{a-c}^{b+1+c}(f)(x_-,x_+,w_x).
    \end{split}
\]
Rewriting the last inequality, we easily get the required bounds for $I_a^b(f)(x_-,x_+,w_x)$.
\end{proof}

\begin{proof}[Proof of Proposition~\ref{prop:Iprop}]
We fix a positive function $f\in L^1(\dbG\times\Omega,\BMSm\times\omega)$ and 
argue to show that there exists a full measure subset $A_f\subset \dbG\times\Omega$ such that for every point $(\xi,\eta,w)\in A_f$
and
for every time interval $[a,b] \subset \bbR$,
 the value $I_a^b(f)(\xi,\eta)$ is finite
and we have the convergence
\begin{equation} \label{eq:VfI}
V(f)(y)=\lim_{T\to\infty} I_a^{b+T} (f)(\xi,\eta,w),
\end{equation}
where $y=v(\xi,\eta,w)$.

For every $g\in \Gamma$ we consider the function $f\circ g\in L^1(\dbG\times\Omega,\BMSm\times\omega)$ and observe that $V(f\circ g)=V(f)\in L^1(Y,\mu)$.
We let $\bar{X}_0\subset \bar{X}$ be the full measure set for which the formula (\ref{eq:V1}) holds for the countable collection of functions $f\circ g$, $g\in \Gamma$.
Note that this is an $\bbR$-invariant subset of $\bar{X}$.
We consider the preimage of $\bar{X}_0$ in $\dbG\times \bbR\times\Omega$, $p^{-1}(\bar{X}_0)$,
which is $\Gamma\times\bbR$-invariant subset,
and we let $A_f\subset \dbG\times\Omega$ be its image in $\dbG\times\Omega$, that is $A_f=q(p^{-1}(X_0))$,
see Diagram~(\ref{e:quotients}).
We note that $A_f\subset \dbG\times\Omega$ is indeed a full measure subset.

We fix a point $(\xi,\eta,w)\in A_f$ and an interval $[a,b] \subset \bbR$ and argue to prove (\ref{eq:VfI}).
We consider the point $(\xi,\eta,0,w)\in q^{-1}(A_f)=p^{-1}(\bar{X}_0) \subset \dbG\times \bbR\times\Omega$
and we denote $(x_-,x_+,t_x,w_x)=\bar{p}\circ p(\xi,\eta,0,w)$,
that is the corresponding representative in the fixed fundamntal domain.
We fix $h\in \Gamma$ such that 
\[ (\xi,\eta,0,w)=h(x_-,x_+,t_x,w_x)=(hx_-,hx_+,t_x+\tau(h,x_-,x_+),hw_x). \]
Note that $(x_-,x_+,t_x,w_x)\in p^{-1}(\bar{X}_0)$, as this set is $\Gamma$-invariant.
We let $x=p(x_-,x_+,t_x,w_x)=p(\xi,\eta,0,w) \in \bar{X}_0$ be the corresponding image
and we let $y=u(x)=v(\xi,\eta,w) \in Y$.

Setting $s=\tau(h,x_-,x_+)$ and noting that 
\[ 
    \tau(gh,x_-,x_+)=\tau(g,hx_-,hx_+)+\tau(h,x_-,x_+)=\tau(g,\xi,\eta)+s, 
\]
we get the relation
\[ 
    \begin{split}
        I_a^b(f)(\xi,\eta,w)&=I_a^b(f)(hx_-,hx_+,hw_x)\\
        &=\frac{1}{b-a}\cdot \sum_{\setdef{g\in\Gamma}{\tau(gh,x_-,x_+)\in[a,b]}} f(ghx_-,ghx_+,ghw_x)\\
        &=\frac{1}{b-a}\cdot \sum_{\setdef{g\in\Gamma}{\tau(g,x_-,x_+)\in[a+s,b+s]}} f(ghx_-,ghx_+,ghw_x)\\
        &=I_{a+s}^{b+s}(f\circ h)(x_-,x_+,w_x).
    \end{split}
\]
By Lemma~\ref{lem:Ibound}
there exists a constant $c\geq 0$ such that for the functions 
$f\circ h:\dbG\times\Omega\to [0,\infty)$
and for every $a,b$ and $T$ with $b+T\geq a+c+1$,
\[ 
    \begin{split}
    \frac{1}{b-a}\cdot& \int_{a+s+c}^{b+s-1-c+T} \overline{f\circ h}_{[0,1]}(\phi^t x)\dd t\\
    &\le I_{a+s}^{b+s+T}(f\circ h)(x_-,x_+,w_x)\\
    &\le \frac{1}{b-a}\cdot \int_{a+s-1-c}^{b+s+c+T} \overline{f \circ h}_{[0,1]}(\phi^t x)\dd t.
    \end{split}
\]
Since $x$ is in $\bar{X}_0$, the functions $f\circ h$ satisfies the formula (\ref{eq:V1}),
and therefore the left and right hand sides converge to $V(f\circ h)(y)$ as $T$ tends to $\infty$.
We conclude that 
\[ 
    \begin{split}
        \lim_{T\to\infty} &I_a^{b+T}(f)(\xi,\eta,w) = \lim_{T\to\infty} I_{a+s}^{b+s+T}(f\circ h)(x_-,x_+,w_x)\\
        &=V(f\circ h)(y)=V(f)(y).
    \end{split}
\]
This proves (\ref{eq:VfI}).
The finiteness of $I_a^{b+T}(f)(\xi,\eta,w)$ for large $T$ follows, and by the positivity of $f$ we also deduce the 
finiteness of $I_a^{b}(f)(\xi,\eta,w)$.

%%%%%%%%
\medskip

Next we fix a positive function $f\in L^1(\dbG\times\Omega,\BMSm\times\omega)$ and 
argue to show that there exists a full measure subset $B_f\subset \dbG$ such that for every point $(\xi,\eta,w)\in B_f$
and
for every time interval $[a,b] \subset \bbR$,
 the value $J_a^b(f)(\xi,\eta,w)$ is finite
and we have the convergence
\begin{equation} \label{eq:VfJ}
V(f)(y)=\lim_{T\to\infty} J_{a-T}^{b} (f)(\xi,\eta,w),
\end{equation}
where $y=v(\xi,\eta,w)$.

We fix an exhaustion of $\dbG$ by a nested sequence of compact sets $K_1\subset K_2\subset\dots$.
For each $n\in\bbN$ we consider the function $f_n=1_{K_n}\cdot f$.
We let $B_f=\cap_n  A_{f_n}$ and note that it is a full measure set.
By Lemma~\ref{lem:tausigma}, there exist constants $C_n$ 
so that for $(\xi,\eta)\in K_n$ and $g\in\Gamma$ with $(g\xi,g\eta)\in K_n$ one has
\[
	|\tau(g,\xi,\eta)+\sigma(g,\xi)|\le C_n.
\]
It follows that for every $(\xi,\eta,w)\in \dbG$ 
we have the relation
\[
    \begin{split}
        \frac{a-b-2C_n}{b-a}&\cdot I_{-b+C_n}^{-a-C_n}(f_n)(\xi,\eta,w)\le J_a^b(f_n)(\xi,\eta,w)\\
        &\le \frac{a-b+2C_n}{b-a}\cdot I_{-b-C_n}^{-a+C_n}(f_n)(\xi,\eta,w).
    \end{split}
\]
We conclude that for $(\xi,\eta,w)\in B_f \subset A_{f_n}$,
\[
    \lim_{T\to\infty} J_{a-T}^{b} (f_n)(\xi,\eta,w) = V(f_n)(y),	
\]
where $y=v(\xi,\eta,w)$.
We thus get (\ref{eq:VfJ}) by taking $n\to\infty$ and using monotonicity.
The finiteness of $J_{a-T}^{b}(f)(\xi,\eta,w)$ for large $T$ follows, and by the positivity of $f$ we also deduce the 
finiteness of $J_a^{b}(f)(\xi,\eta,w)$.

This completes the proof for positive functions in $L^1(\dbG\times\Omega,\BMSm\times\omega)$.
The case of arbitrary functions follows by linearity.
\end{proof}

Armed with Proposition~\ref{prop:Iprop}, we now give an alternative proof of Corollary~\ref{C:WM}, 
thus also of its special case, Theorem~\ref{T:double-erg}.

\begin{proof}[Proof of Corollary~\ref{C:WM}]
We will prove the corllary by showing $L^1(Y,\mu)$ is one dimensional, consisting only of constant functions,
thus $(Y,\mu)$ is a singleton.
As $V:L^1(\dbG\times\Omega,\BMSm\times\omega)\to L^1(Y,\mu)$ is a bounded surjection, it is enough to show that $V(f)$ is a constant function for 
every $f$ in a certain dense subspace of $L^1(\dbG\times\Omega,\BMSm\times\omega)$.
We will consider the dense subspace consisting of the image of the injection
\[ C_c(\dbG)\otimes L^1(\Omega,\omega) \hookrightarrow L^1(\dbG\times\Omega,\BMSm\times\omega),
\quad \phi\otimes\psi \mapsto \phi\cdot \psi, \]
where $C_c(\dbG)$ is the space of continuous functions with compact support
on $\dbG$.

We thus fix $\phi\in C_c(\dbG)$ and $\psi\in L^1(\Omega,\omega)$,
denote $f=\phi\cdot\psi\in L^1(\dbG\times\Omega,\BMSm\times\omega)$ and argue to show that $V(f)\in L^1(Y,\mu)$ is constant.
We will do so by shwoing that $\bar{f}=V(f)\circ v$ is constant.
Note that $\bar{f}$, the pull back of $V(f)$ to $\dbG\times\Omega$,
is a $\Gamma$-invariant measurable function, but it need not be in $L^1(\dbG\times\Omega,\BMSm\times\omega)$.
We will show that $\bar{f}$ is independent of the $\dbG$-variable, thus it is
a $\Gamma$-invariant function on $\Omega$, hence constant by the ergodicity assumption
on the $\Gamma$-action on $(\Omega,\omega)$.
Recall that the measure $\BMSm$ on $\dbG$ is equivalent to the product measure $\PSnu\times \PSnu$.
In order to show that $\bar{f}$ is independent of the $\dbG$-variable,
we will show that the following condition is satisfied.
\begin{equation} \label{eq:hopf}
\begin{split}
& \mbox{For $\PSnu\times \omega$-a.e. $(\xi,w)\in\bG\times \Omega$,
there exists a $\PSnu$-full measure subset of $\bG$} \\
& \mbox{such that for every $\eta,\eta'$ in this subset,} \\
& \bar{f}(\xi,\eta,w) = \bar{f}(\xi,\eta',w) \quad \mbox{and} \quad \bar{f}(\eta,\xi,w) = \bar{f}(\eta',\xi,w).
\end{split}
\end{equation}
Recalling that the map $u:(\dbG,\BMSm) \to (Y,\mu)$ is flip-equivariant and conjugating with the flip,
the two equations in \eqref{eq:hopf} imply each other,
thus it is enough to establish the first one.
For this we will show that for every compact subset $K\subset \dbG$
which contains the support of $\phi$ in its interior, $\supp(\phi) \subset \interior(K)$,
the following condition is satisfied.
\begin{equation} \label{eq:hopfK}
\begin{split}
& \mbox{For $\PSnu\times \omega$-a.e. $(\xi,w)\in\bG\times \Omega$,
there exists a $\PSnu$-full measure subset of $\bG$} \\
& \mbox{such that for every $\eta,\eta'$ in this subset,} \\
& (\xi,\eta),(\xi,\eta')\in K \quad \Longrightarrow \quad
\bar{f}(\xi,\eta,w) = \bar{f}(\xi,\eta',w).
\end{split}
\end{equation}
Condition \eqref{eq:hopf} indeed follows from condition \eqref{eq:hopfK}, by taking a sequence of arbitrary large such compact subsets $K$.

We thus fix a compact subset $K\subset \dbG$ satisfying $\supp(\phi) \subset \interior(K)$
and argue to prove condition \eqref{eq:hopfK}.
We consider the indicator function $1_K$ 
and the corresponding function $h=1_K\cdot |\psi| \in L^1(\dbG\times\Omega,\BMSm\times\omega)$.
Denoting $\bar{h}=V(h)\circ v$,
we will show that for every $\epsilon>0$ the following condition is satisfied.
\begin{equation} \label{eq:hopfep}
\begin{split}
& \mbox{For $\PSnu\times \omega$-a.e. $(\xi,w)\in\bG\times \Omega$,
there exists a $\PSnu$-full measure subset of $\bG$} \\
& \mbox{such that for every $\eta,\eta'$ in this subset,} \\
& (\xi,\eta),(\xi,\eta')\in K \quad \Longrightarrow \quad
|\bar{f}(\xi,\eta,w) - \bar{f}(\xi,\eta',w)|<\epsilon\cdot\bar{h}(\xi,\eta,w).
\end{split}
\end{equation}
Condition \eqref{eq:hopfK} indeed follows from condition \eqref{eq:hopfep}, by taking a sequence of arbitrary small $\epsilon$'s.

For the rest of the proof we fix $\epsilon>0$ and argue to prove (\ref{eq:hopfep}).
We apply Proposition~\ref{prop:Iprop} to both functions $f$ and $h$,
obtaining an $\BMSm\times\omega$-full measure subset $A\subset \dbG\times\Omega$
such that for every $(\xi,\eta,w)\in A$ and for every $a,b\in  \bbR$,
we have both
\begin{equation} \label{eq:Jab}
\begin{split}
    &\bar{f}(\xi,\eta,w)=\lim_{T\to\infty} J_{a-T}^{b} (f)(\xi,\eta,w) \\
    &\bar{h}(\xi,\eta,w)=\lim_{T\to\infty} J_{a-T}^{b} (h)(\xi,\eta,w). 
\end{split}
\end{equation}
We will show that (\ref{eq:hopfep}) is satsified for every $\xi,w,\eta,\eta'$ such that $(\xi,\eta,w),(\xi,\eta',w)\in A$,
which is enough upon applying Fubini's Theorem to the map
\[ (\dbG\times\Omega,\PSnu\times\PSnu\times\omega) \to 
(\bG\times\Omega,\PSnu\times\omega) \]
associated with the projection on the first coordinate $\dbG\to\bG$.

We now fix $\xi,w,\eta,\eta'$ such that $(\xi,\eta,w),(\xi,\eta',w)\in A$ and argue to show (\ref{eq:hopfep}).
We asume as we may that $(\xi,\eta),(\xi,\eta') \in K$
and recall that we have $\supp(\phi) \subset \interior(K)$.
We claim that there exists $a<0$ such that for every $g\in \Gamma$,
\begin{equation} \label{eq:siga}
	\sigma(g,\xi)< a \qquad\Longrightarrow\qquad |f(g\xi,g\eta,gw)-f(g\xi,g\eta',gw)|\leq \epsilon\cdot h(g\xi,g\eta,gw).
\end{equation}
For $g\in \Gamma$ with $(g\xi,g\eta)\in K$ 
we use Lemma~\ref{L:contraction} and the uniform continuity of $\phi$
to find $a_1<0$ such that for $\sigma(g,\xi)< a_1$
we get
$|\phi(g\xi,g\eta)-\phi(g\xi,g\eta')|\leq \epsilon$,
thus 
\[ |f(g\xi,g\eta,gw)-f(g\xi,g\eta',gw)|\leq |\phi(g\xi,g\eta)-\phi(g\xi,g\eta')|\cdot |\psi(gw)| \]
\[
\leq \epsilon\cdot |\psi(gw)|=\epsilon\cdot 1_K(g\xi,g\eta)\cdot |\psi(gw)|= \epsilon\cdot h(g\xi,g\eta,gw). \]
Interchanging the roles of $\eta$ and $\eta'$ in Lemma~\ref{L:contraction}
and using $\supp(\phi) \subset \interior(K)$, 
we find $a_2<0$ such that for $\sigma(g,\xi)< a_2$,
if $(g\xi,g\eta')\in\supp(\phi)$ then $(g\xi,g\eta)\in K$.
Thus for $g\in \Gamma$ with $(g\xi,g\eta)\notin K$ and
for $\sigma(g,\xi)< a_2$,
we have both $(g\xi,g\eta),(g\xi,g\eta')\notin\supp(\phi)$,
thus 
\[ 
    \begin{split}
        |f(g\xi,g\eta,gw)-f(g\xi,g\eta',gw)|&= |\phi(g\xi,g\eta)\cdot \psi(gw)-\phi(g\xi,g\eta')\cdot \psi(gw)| =0\\
        &=\epsilon\cdot 1_K(g\xi,g\eta)\cdot |\psi(gw)|= \epsilon\cdot h(g\xi,g\eta,gw).
    \end{split}
\]
Setting $a=\min\{a_1,a_2\}$ we indeed get \eqref{eq:siga} for all $g\in \Gamma$,
proving the claim.
Using \eqref{eq:Jab} we now get
\[
\begin{split}
 |\bar{f}(\xi,\eta,w)&-\bar{f}(\xi,\eta',w)|\\
& =~ | \lim_{T\to\infty} J_{a-T}^{a} (f)(\xi,\eta,w)-\lim_{T\to\infty} J_{a-T}^{a} (f)(\xi,\eta',w)|  \\
& \leq~ \lim_{T\to\infty} \frac{1}{T}\cdot \sum_{\setdef{g\in\Gamma}{\sigma(g,\xi)\in[a-T,a]}} |f(g\xi,g\eta,gw)-f(g\xi,g\eta',gw)|  \\
& \leq~ \lim_{T\to\infty} \frac{1}{T}\cdot \sum_{\setdef{g\in\Gamma}{\sigma(g,\xi)\in[a-T,a]}}  \epsilon\cdot h(g\xi,g\eta,gw)  \\
& =~ \lim_{T\to\infty} J_{a-T}^{a} h(\xi,\eta,w) = \epsilon\cdot \bar{h}(\xi,\eta,w).
\end{split}
\]
which proves (\ref{eq:hopfep})
and thus finishes our proof.
\end{proof}

\begin{proof}[Proof of Theorem~\ref{T:generg}]

Recall that $V:L^1(\dbG\times\Omega,\BMSm\times\omega) \to L^1(Y,\mu)$ was defined at the beginning of this section by means of push-forward of absolutely continuous measures.
It follows from  Corollary~\ref{C:WM} that $Y$ is trivial, thus for every $f\in L^1(\dbG\times\Omega,\BMSm\times\omega)$, $V(f)$ is the constant 
$\int_{\dbG\times\Omega} f \dd \BMSm\times\omega$. 
It follows from Proposition~\ref{prop:Iprop} that for $\BMSm\times\omega$-a.e point $(\xi,\eta,w)\in \dbG\times\Omega$, 
\[ \lim_{T\to\infty}\frac{1}{T}\cdot \sum_{\setdef{g\in\Gamma}{\sigma(g,\xi)\in[-T,0]}} f(g\xi,g\eta,gw) \]
\[ =
    \lim_{T\to\infty} J_{-T}^0(f)(\xi,\eta,w)=\int_{\dbG\times\Omega} f \dd \BMSm\times\omega. 
\]
Conjugating with the flip, we obtain
\[ 
    \lim_{T\to\infty}\frac{1}{T}\cdot \sum_{\setdef{g\in\Gamma}{\sigma(g,\eta)\in[0,T]}} f(g\xi,g\eta,gw)=
\int_{\dbG\times\Omega} f \dd \BMSm\times\omega. 
\]
It also follows from Proposition~\ref{prop:Iprop} that for every $a,b\in \bbR$,
\[ 
    \begin{split}
        \lim_{T\to\infty}&\frac{1}{T}\cdot \sum_{\setdef{g\in\Gamma}{\tau(g,\xi,\eta)\in[a,b+T]}} f(g\xi,g\eta,gw)\\
        &=\lim_{T\to\infty} I_a^{b+T}(f)(\xi,\eta,w) \\
        &=\int_{\dbG\times\Omega} f \dd \BMSm\times\omega.
    \end{split}
\]
Note that by the definition of $\tau$, (\ref{e:rho-and-F}), and the discussion preceding it, 
there exists $C\geq 0$ such that for every $g\in \Gamma$ and every $(\xi,\eta)\in \dbG$,
\[ |\tau(g,\xi,\eta)-\frac{1}{2}(\sigma(g,\eta)-\sigma(g,\xi))| \leq C. \]
It follows that for all $T\geq 2C$ and for positive function $f$,
\[ 
    \begin{split}
        \frac{T-2C}{T}&\cdot I_C^{T-C}f(\xi,\eta,w)\\
        &\le
\frac{1}{T}\cdot \sum_{\setdef{g\in\Gamma}{\frac{1}{2}(\sigma(g,\eta)-\sigma(g,\xi))\in[0,T]}} f(g\xi,g\eta,gw) \\
    &\le \frac{T+2C}{T}\cdot I_{-C}^{T+C}f(\xi,\eta,w).
    \end{split}
\]
Taking the limit we conclude that 
\[\lim_{T\to\infty} \frac{1}{T}\cdot \sum_{\setdef{g\in\Gamma}{\frac{1}{2}(\sigma(g,\eta)-\sigma(g,\xi))\in[0,T]}} f(g\xi,g\eta,gw)=
\int_{\dbG\times\Omega} f \dd \BMSm\times\omega, \]
which holds for every function $f$, by linearity.
\end{proof}

\begin{proof}[Proof of Corollary~\ref{C:avaragescheme}]
We note that by Proposition~\ref{P:invariantBMS}, 
\[ \int_\dbG e^{-F(\xi,\eta)} \dd\BMSm= \int_\dbG 1\dd \nu\times\nu =1 \]
and 
\[
		F(\xi,\eta)=\delta_\Gamma\cdot\Gprod{\xi}{\eta}{o}+O(1),
	\]
thus $\psi(\xi,\eta)=e^{-\delta_\Gamma\cdot\Gprod{\xi}{\eta}{o}}$
is in $L^1(\dbG,\BMSm)$ and it has a positive integral.
We apply Theorem~\ref{T:generg} twice,
once for the function $f=\psi\cdot\phi$ and once for the function $f=\psi\cdot 1$,
thus obtain a $\BMSm\times\omega$-full measure subset of $\dbG\times\Omega$ 
for every point of which we have both 
\[ \lim_{T\to\infty}\frac{1}{T}\cdot \sum_{\setdef{g\in\Gamma}{\sigma(g,\eta)\in[0,T]}} \psi(g\xi,g\eta)\cdot\phi(gw)=
\int_{\dbG} \psi \dd \BMSm \cdot
\int_{\Omega} \phi \dd \omega \]
and
\[ \lim_{T\to\infty}\frac{1}{T}\cdot \sum_{\setdef{g\in\Gamma}{\sigma(g,\eta)\in[0,T]}} \psi(g\xi,g\eta)\cdot 1=
\int_{\dbG} \psi \dd \BMSm. \]
The corollary follows clearly by applying Fubini's theorem.
\end{proof}

% section double_ergodicity (end)

\section{Further properties of the measured geodesic flow} \label{sec:mackey}

In this section we discuss further properties of the measured geodesic flow $(X,\BMSm,\phi^\bbR)$
and prove Corollaries~\ref{cor:Mackey}, \ref{cor:relmetric} and \ref{C:induced}.
We will also introduce some further constructions extending the scope of Diagram~(\ref{e:quotients}), see Diagram~(\ref{e:finaldiagram}) below.

Recall how we presented the $\Gamma$-action on the space $\dbG\times \bbR$ using the 
cocycle $\tau$ introduced in Equation~(\ref{e:rho-and-F}), namely 
\[ 
    \begin{split}
        \tau(g,\xi,\eta) &=\frac{\rho(g,\eta)-\rho(g,\xi)}{2}\\
        &=\rho(g,\eta)-\frac{1}{2}\nabla_g F(\xi,\eta)\\
        &=-\rho(g,\xi)+\frac{1}{2}\nabla_g F(\xi,\eta),
    \end{split} 
 \]
where $\rho$ is the Radon-Nikodym cocycle associated with the $\Gamma$-action on $(\bG,\PSnu)$
and $F$ is the function introduced in Equation~(\ref{eq:F}).
We have chosen this particular cocycle representative in order to emphasize the flip action on $\dbG\times \bbR$ and on its quotient space $X$.
However, Equation~(\ref{e:rho-and-F}) shows that in fact $\tau$ is cohomologous to the cocycles
\[ 
    -\rho_1,\rho_2:\Gamma\times \dbG \to \bbR, \qquad 
    -\rho_i(g,\xi_1,\xi_2)=-\rho(g,\xi_i), 
\]
as $\nabla_g F(\xi,\eta)$ is, by definition, a coboundary.

\medskip

In this section we will follow another convention, considering the $\Gamma$-action on $\dbG\times \bbR$ using the cocycle $\rho_2$ defined above,
see Remark~\ref{R:other-cocycles}.
As $\rho_2$ is cohomologous to $\tau$ this is just a different presentation of the same object.
Note that $\Gamma$ acts also on the space $\bG\times \bbR$ by
\[ 
    g:(\eta,t)\ \mapsto\ (g\eta,t+\rho(g,\eta)). 
\]
and the projection map $\pr_2:\dbG \to \bG$, $(\xi,\eta)\mapsto \eta$ induces a map $\dbG\times \bbR \to \bG\times \bbR$,
which is $\bbR\times \Gamma$-equiavariant under our new convention.

We have constructed the space $X$ as the space of $\Gamma$-orbits in $\dbG\times \bbR$.
We can interpret this construction as the Mackey range of the cocycle $\rho_2$,
namely the space of the ergodic components of the $\Gamma$-action on  $(\dbG\times\bbR,\BMSm\times\Leb)$,
taking into account that the space of orbits is indeed the space of ergodic components, by Proposition~\ref{P:mgf}.
Next we define similarly the space $Z$ to be the Mackey range of the cocycle $\rho$,
that is the space of ergodic components of the $\Gamma$-action on $(\bG\times\bbR,\PSnu\times\Leb)$,
endowed with the quotient $\bbR$-action, which we denote $\psi^\bbR$, and the quotient measure class.
We emphasize that typically $Z$ is not the space of orbits for the $\Gamma$-action on $\bG\times\bbR$.
In fact, often times this action is ergodic, in which case $Z$ is a singleton.
By construction, we get a natural map 
\[
    \overline{\pr}_2:X\overto{} Z
\]
making Diagram~(\ref{e:finaldiagram}) below commutative.
We endow $Z$ with the measure $\Mm:={\overline{\pr}_2}_*(\BMm)$, that is the push forward of the measure $\BMm$ under this map
(we use ``M" here to honor Mackey).
We note that this is an ergodic $\psi^\bbR$-invariant probability measure
and we call the probability measure preserving system $(Z,\Mm,\psi^\bbR)$ 
{\em the Mackey space of $(\Gamma,[d])$}.

\begin{equation}\label{e:finaldiagram}
	\begin{tikzcd}
		 &  (\dbG\times\bbR,\BMSm\times\Leb) \arrow[dd,"\pr_2\times \id_\bbR"] \arrow[dl,"p"'] \arrow[dr,"q"] & \\
 (X,\BMm) \arrow[dd,"\overline{\pr}_2"] & & (\dbG,\BMSm) \arrow[dd,"\pr_2"] \\
& (\bG\times\bbR,\PSnu\times\Leb) \arrow[dl,"\bar{p}"'] \arrow[dr,"\bar{q}"] & \\
(Z,\Mm) & & (\bG,\PSnu)
	\end{tikzcd}
\end{equation}

In the notation of \S\ref{subsec:mackeyrange}, $\rho_2=\pr_2^*\rho$ and Diagram~(\ref{e:finaldiagram}) is an instance of 
Diagram~(\ref{e:mackeyrange}).

\begin{proof}[Proof of Corollary~\ref{cor:Mackey}]
By Proposition~\ref{prop:mackeyrange} ,$\overline{\pr}_2(X,\BMm) \to (Z,\Mm)$ 
is relatively $\bbR$-metrically ergodic. 
It is thus relatively weakly mixing by Equation (\ref{eq:rel}).
\end{proof}
%%%%%%%%%%%

\begin{remark} \label{rem:important}
(a) Diagram~(\ref{e:finaldiagram}) summarizes the relations between the main objects that 
we have considered throughout this paper.
One should note that the spaces on the right hand side are endowed with a 
$\Gamma$-action, the spaces on the left hand side are endowed
with an $\bbR$-action and the spaces on the middle are endowed with a 
commuting action of both groups.
Moreover, the three spaces on the top are endowed with a flip action, 
that is a $\mathbb{Z}/2$-action, which commutes with the $\Gamma$-actions 
and normalizes the $\bbR$-actions, reversing the time direction.
Thus we may precompose each vertical map by the flip action on the top. 
In particular, we get a mysterious time reversing map $X\to Z$.
% {\color{red} in cases $Z=S^1$, this map is the postcomposition of $\overline{\pr}_2$ with the group inversion on $S^1$}.

(b) The vertical maps are all metrically ergodic with respect to the respective actions.
Conjecturally, the flow $Z$ is $\bbR$-homogenous (that is, a rotation or trivial) and $\overline{\pr}_2$ is relatively mixing (and not merely weakly mixing,
as guaranteed by Corollary~\ref{cor:Mackey}).
The maps $q$ and $\bar{q}$ are quotient maps on the spaces of $\bbR$-orbits, thus they come equipped with a corresponding natural cohomology classes, represented correspondingly by $\rho$ (or $\tau$) and $\rho_2$ and $\gamma$. 
The map $p$ is a quotient map on the spaces of $\Gamma$-orbits, 
thus it comes equipped with a corresponding natural cohomology classes, represented by $\gamma$. 
\end{remark}
 
\begin{proof}[Proof of Corollary~\ref{cor:relmetric}]
We will use the cocycle $\gamma$ defined in Lemma~\ref{L:like-cocycle} and Remark~\ref{R:ess-free}.
The result will follow for every cohomologous cocycle.
The isomorphism
\[ X\times \Gamma \simeq \hat{X}\times \Gamma \simeq \dbG\times \bbR, \quad (x,g) \mapsto (\hat{x},g) \mapsto g\hat{x}, \]
gives a natural projection $\dbG\times \bbR \to \Gamma$, $(\xi,\eta,t)\mapsto g_{\xi,\eta,t}$.
 
Given an isometric action of $\Gamma$ on a separable metric space $S$ and a $\gamma$-equivariant map $f:X\to S$,
that is a map satisfying for every $t\in \bbR$ and a.e $x\in X$, 
\[ f(\phi^t x)=\gamma_{t,x}f(x), \]
we define $\tilde{f}:\dbG\times \bbR \to S$ by $\tilde{f}(\xi,\eta,t)=g_{\xi,\eta,t}f(p(\xi,\eta,t)))$.
One easily checks that $\tilde{f}$ is $\Gamma$-equivariant and $\bbR$-invariant,
thus it gives a well defined $\Gamma$-equivariant map $\bar{f}:\dbG \to S$.
By Theorem~\ref{T:rSAT} this map is essentially constant and its essential image is a $\Gamma$-fixed point in $S$.
We conclude that $f$ is essentially constant and its essential image is a $\Gamma$-fixed point in $S$.
\end{proof}

\begin{proof}[Proof of Corollary~\ref{C:induced}]\hfill{}\\
	Let $(\Omega,\omega,\Gamma)$ be an ergodic p.m.p. system and $(X\times \Omega,\BMm\times\omega,\phi^\bbR_\gamma)$ the induced flow.
	To show ergodicity of the latter flow, consider a measurable integrable $\phi_\gamma^\bbR$-invariant function 
	\[
		F:X\times\Omega\to\bbR.
	\]
By Fubini it gives a map $\gamma$-equivariant $f:X\to L^1(\Omega,\omega)$.
Specializing to $S=L^1(\Omega,\omega)$ in Corollary~\ref{cor:relmetric} we get that $f$ is essentially constant and its image is a fixed point.
By the ergodicty of $\Omega$, this fixed point is a constant function. We conclude that $F$ is essentially constant.
\end{proof}

% bibliography:
\begin{bibdiv}
\begin{biblist}
		\bib{BF:icm}{article}{
		   author={Bader, Uri},
		   author={Furman, Alex},
		   title={Boundaries, rigidity of representations, and Lyapunov exponents},
		   conference={
		      title={Proceedings of the International Congress of
		      Mathematicians---Seoul 2014. Vol. III},
		   },
		   book={
		      publisher={Kyung Moon Sa, Seoul},
		   },
		   date={2014},
		   pages={71--96},
		   review={\MR{3729019}},
		   eprint={arxiv:math/1404.5107},
		}
		\bib{BF:sr-note}{article}{
		   author={Bader, Uri},
		   author={Furman, Alex},
		   title={Boundaries, Weyl groups, and superrigidity},
		   journal={Electron. Res. Announc. Math. Sci.},
		   volume={19},
		   date={2012},
		   pages={41--48},
		}
		\bib{BF:Lya}{article}{
		      author={Bader, Uri},
		      author={Furman, Alex},
		    title={Lyapunov spectrum via boundary theory I},
			eprint={arxiv:2504.09731},
		}
		% \bib{BF:stronger}{article}{
		%       author={Bader, Uri},
		%       author={Furman, Alex},
		%        title={Stronger boundary pairs and characteristic maps},
		% 		note={in preparation},
		% }
		\bib{Bader+Muchnik}{article}{
		   author={Bader, Uri},
		   author={Muchnik, Roman},
		   title={Boundary unitary representations---irreducibility and rigidity},
		   journal={J. Mod. Dyn.},
		   volume={5},
		   date={2011},
		   number={1},
		   pages={49--69},
		   issn={1930-5311},
		   review={\MR{2787597}},
		   doi={10.3934/jmd.2011.5.49},
		}
		\bib{BHM1}{article}{
		   author={Blach{\`e}re, S.},
		   author={Ha{\"{\i}}ssinsky, P.},
		   author={Mathieu, P.},
		   title={Harmonic measures versus quasiconformal measures for hyperbolic
		   groups},
		   journal={Ann. Sci. \'Ec. Norm. Sup\'er. (4)},
		   volume={44},
		   date={2011},
		   number={4},
		   pages={683--721},
		}
		\bib{BHM2}{article}{
		   author={Blach{\`e}re, S.},
		   author={Ha{\"{\i}}ssinsky, P.},
		   author={Mathieu, P.},
		   title={Asymptotic entropy and Green speed for random walks on countable
		   groups},
		   journal={Ann. Probab.},
		   volume={36},
		   date={2008},
		   number={3},
		   pages={1134--1152},
		}
		
		\bib{Burger+Monod:JEMS:99}{article}{
		      author={Burger, M.},
		      author={Monod, N.},
		       title={Bounded cohomology of lattices in higher rank {L}ie groups},
		        date={1999},
		        % ISSN={1435-9855},
		     journal={J. Eur. Math. Soc. (JEMS)},
		      volume={1},
		      number={2},
		       pages={199 -- 235},
		}
		\bib{Burger+Monod}{article}{
		      author={Burger, M.},
		      author={Monod, N.},
		       title={Continuous bounded cohomology and applications to rigidity theory},
		        date={2002},
		     journal={Geom. Funct. Anal.},
		      volume={12},
		      number={2},
		       pages={219-- 280},
		}
        \bib{CR}{article}{
               author={Cantrell, Stephen},
               author={Reyes, Eduardo},
               title={Manhattan geodesics and the boundary of the space of metric
               structures on hyperbolic groups},
               journal={Comment. Math. Helv.},
               volume={100},
               date={2025},
               number={1},
               pages={11--59},
               issn={0010-2571},
               review={\MR{4862321}},
               doi={10.4171/cmh/579},
            }
        \bib{CR2}{article}{
               author={Cantrell, Stephen},
               author={Reyes, Eduardo},
               title={Approximate marked length rigidity in coarse geometry},
               journal={J. Lond. Math. Soc. (2)},
               volume={113},
               date={2026},
               number={1},
             }
            \bib{CT}{article}{
               author={Cantrell, Stephen},
               author={Tanaka, Ryokichi},
               title={Invariant measures of the topological flow and measures at
               infinity on hyperbolic groups},
               journal={J. Mod. Dyn.},
               volume={20},
               date={2024},
               pages={215--274},
               issn={1930-5311},
               review={\MR{4799464}},
               doi={10.3934/jmd.2024006},
            }
            \bib{Coo}{article}{
		   author={Coornaert, M.},
		   title={Mesures de Patterson--Sullivan sur le bord d'un espace hyperbolique
		   au sens de Gromov},
		   language={French, with French summary},
		   journal={Pacific J. Math.},
		   volume={159},
		   date={1993},
		   number={2},
		   pages={241--270},
		}
		\bib{Dani}{article}{
		   author={Dani, S. G.},
		   title={Kolmogorov automorphisms on homogeneous spaces},
		   journal={Amer. J. Math.},
		   volume={98},
		   date={1976},
		   number={1},
		   pages={119--163},
		}
		\bib{DK}{article}{
		      author={Dey, S.},
		      author={Kapovich, M.},
		       title={Patterson--Sullivan theory for Anosov subgroups},
				eprint={arXiv:1904.10196},
		}
            \bib{FrFu}{article}{
               author={Fricker, Ethan},
               author={Furman, Alex},
               title={Quasi-Fuchsian vs negative curvature metrics on surface groups},
               journal={Israel J. Math.},
               volume={251},
               date={2022},
               number={1},
               pages={365--378},
               issn={0021-2172},
               review={\MR{4555899}},
               doi={10.1007/s11856-022-2440-1},
            }
            \bib{F:cg}{article}{
		   author={Furman, A.},
		   title={Coarse-geometric perspective on negatively curved manifolds and
		   groups},
		   conference={
		      title={Rigidity in dynamics and geometry},
		      address={Cambridge},
		      date={2000},
		   },
		   book={
		      publisher={Springer},
		      place={Berlin},
		   },
		   date={2002},
		   pages={149--166},
		}
		\bib{FW}{article}{
		   author={Furman, Alex},
		   author={Weiss, Benjamin},
		   title={On the ergodic properties of Cartan flows in ergodic actions of
		   ${\rm SL}_2({\bf R})$ and ${\rm SO}(n,1)$},
		   journal={Ergodic Theory Dynam. Systems},
		   volume={17},
		   date={1997},
		   number={6},
		   pages={1371--1382},
		}
		\bib{Ga}{article}{
		      author={Garncarek, L.},
		       title={Boundary representations of hyperbolic groups},
				eprint={arXiv:1404.0903},
		}
		\bib{Ghys-delaHarpe}{article}{
		   author={Ghys, \'Etienne},
		   author={de la Harpe, Pierre},
		   title={L'action au bord des isom\'etries},
		   language={French},
		   conference={
		      title={Sur les groupes hyperboliques d'apr\`es Mikhael Gromov},
		      address={Bern},
		      date={1988},
		   },
		   book={
		      series={Progr. Math.},
		      volume={83},
		      publisher={Birkh\"auser Boston, Boston, MA},
		   },
		   date={1990},
		   pages={135--163},
		}
		\bib{GW}{article}{
		   author={Glasner, E.},
		   author={Weiss, B.},
		   title={Weak mixing properties for non-singular actions},
		   journal={Ergodic Theory Dynam. Systems},
		   volume={36},
		   date={2016},
		   number={7},
		   pages={2203--2217},
		}
		\bib{Kaimanovich:DE}{article}{
		   author={Kaimanovich, V. A.},
		   title={Double ergodicity of the Poisson boundary and applications to
		   bounded cohomology},
		   journal={Geom. Funct. Anal.},
		   volume={13},
		   date={2003},
		   number={4},
		   pages={852--861},
		}
        \bib{MackeyRealization}{article}{
            AUTHOR = {Mackey, George W.},
             TITLE = {Point realizations of transformation groups},
           JOURNAL = {Illinois J. Math.},
        %  FJOURNAL = {Illinois Journal of Mathematics},
            VOLUME = {6},
              YEAR = {1962},
             PAGES = {327--335},
         %     ISSN = {0019-2082},
         %  MRCLASS = {28.65 (22.40)},
         % MRNUMBER = {0143874},
        %MRREVIEWER = {H. L. Royden},
        %       URL = {http://projecteuclid.org/euclid.ijm/1255632330},
        }
        \bib{Margulis}{book}{
           author={Margulis, Grigoriy A.},
           title={On some aspects of the theory of Anosov systems},
           series={Springer Monographs in Mathematics},
           note={With a survey by Richard Sharp: Periodic orbits of hyperbolic
           flows;
           Translated from the Russian by Valentina Vladimirovna Szulikowska},
           publisher={Springer-Verlag, Berlin},
           date={2004},
           pages={vi+139},
           isbn={3-540-40121-0},
           review={\MR{2035655}},
           doi={10.1007/978-3-662-09070-1},
        }
        \bib{Mineyev}{article}{
		   author={Mineyev, I.},
		   title={Flows and joins of metric spaces},
		   journal={Geom. Topol.},
		   volume={9},
		   date={2005},
		   pages={403--482},
		}
		\bib{Yue}{article}{
		   author={Yue, C.},
		   title={The ergodic theory of discrete isometry groups on manifolds of
		   variable negative curvature},
		   journal={Trans. Amer. Math. Soc.},
		   volume={348},
		   date={1996},
		   number={12},
		   pages={4965--5005},
		}
    \bib{Reyes}{article}{
       author={Oreg\'on-Reyes, Eduardo},
       title={The space of metric structures on hyperbolic groups},
       journal={J. Lond. Math. Soc. (2)},
       volume={107},
       date={2023},
       number={3},
       pages={914--942},
       issn={0024-6107},
       review={\MR{4555987}},
       doi={10.1112/jlms.12703},
    }
\end{biblist}
\end{bibdiv}
	
\end{document}